\newtheorem{theo}{Theorem}[section]
\newtheorem{lemm}[theo]{Lemma}
\newtheorem{rema}[theo]{Remark}
\newtheorem{defi}[theo]{Definition}
\newtheorem{assu}[theo]{Assumption}
\numberwithin{equation}{section}
\newcommand{\sgn}{\operatorname{sgn}}
\setlist[enumerate,1]{label=\qquad Case \arabic*:}
\title[Exceptional Points in Time-Modulated Elastic Media ]{Construction of Exceptional Points in Time-Modulated High-Contrast Elastic Media}
\author{Yixian Gao}
\address{School of
Mathematics and Statistics, Center for Mathematics and
Interdisciplinary Sciences, Northeast Normal University, Changchun, Jilin 130024, China. }
\email{gaoyx643@nenu.edu.cn}
\author{Shuguan Ji}
\address{School of
Mathematics and Statistics, Center for Mathematics and
Interdisciplinary Sciences, Northeast Normal University, Changchun, Jilin 130024, China. }
\email{jisg100@nenu.edu.cn}
\author{Shangling  Song$^{*}$}
\address{School of
Mathematics and Statistics, Center for Mathematics and
Interdisciplinary Sciences, Northeast Normal University, Changchun, Jilin 130024, China }
\email{songsl458@nenu.edu.cn}
\thanks{$^{*}$Corresponding author: Shangling Song. 
Email: \texttt{songsl458@nenu.edu.cn}}
\thanks{ The research of YG was supported by NSFC grant
(project number, 12371187) and Science and Technology Development Plan Project of Jilin Province (project number, 20240101006JJ).
The research of SJ was supported by NSFC grants (project number,  12225103, 12071065) }
\subjclass[2010]{35B34,  35P20, 74J05}
\keywords{High-contrast elastic media, Subwavelength quasi-frequencies, Time-dependent structures, Asymptotic exceptional points}
\begin{document}
\maketitle
\begin{abstract}
Spatially periodic elastic metamaterials, comprising hard inclusions within a soft matrix in $d$-dimensional space ($d\geq 2$), exhibit a rich spectrum of physical phenomena. This paper investigates such a model and presents the following contributions. First, we analyze the system's subwavelength quasi-frequencies under static conditions, establishing their functional dependence on the high-contrast parameter. This analysis enables the determination of the subwavelength quasi-frequency range. Second, for time-modulated structures, we derive a system of ordinary differential equations (ODEs) within the subwavelength regime. We demonstrate that this ODE system accurately captures the quasi-frequency behavior of the original elastic system. Finally, leveraging the derived ODEs and Floquet's theorem, we construct concrete examples of first-order asymptotic exceptional points (EPs) in three dimensions.
\end{abstract}

\section{Introduction} 

Metamaterials have demonstrated remarkable capabilities in manipulating wave propagation, deriving their unique effects from intricate spatial architectures that typically exhibit periodic patterns.
A natural progression of this principle involves implementing spatiotemporally modulated structures, where material properties vary spatially and are dynamically modulated over time.
This spatiotemporal engineering approach introduces novel dimensions to wave control mechanisms.
Many researchers have studied issues related to time-dependent media~\cite{doi:10.1137/18M1216894, MendonÃ§a_2002, 9103963, AMMARI2021110594}.
Metamaterials are micro-structured materials composed of subwavelength resonators as their fundamental building blocks.
These engineered structures exhibit exotic and useful properties, with their subwavelength resonance characteristics enabling wave control at scales far below the operating wavelength~\cite{ammari2018subwavelength, pendry2000negative}. 
Crystals refer to extended structures characterized by the periodic repetition of a microscopic arrangement. 
When engineered at subwavelength scales, such crystals demonstrate unique capabilities in wave physics~\cite{kaina2015, luo2003, lemoult2016soda}.

Kato~\cite{Kato1966} introduced the term \emph{exceptional points}.
For a physical system described by the Hamiltonian $H_0 + \eta H_1$, where $\eta$ is a strength parameter. Both  the spectrum and eigenfunctions are analytic in  $\eta$. 
At specific points in the complex $\eta$-plane, two energy levels coalesce. Crucially, this merging differs from conventional degeneracy: the eigenspace collapses from two dimensions to one as the corresponding eigenvectors coalesce, eliminating any two-dimensional subspace.
When both $H_0$ and $H_1$ are real symmetric matrices, EPs can only emerge for complex $\eta$. 
Consequently, at EPs, the full problem $H_0+\eta H_1$ ceases to be Hermitian; however, in its matrix representation, it retains complex symmetry. 
These scenarios are detailed in prior studies~\cite{bender1978advanced, heiss1990avoided, heiss1991transitional, heiss1994phase}.
If the Hamiltonian is non-Hermitian, its eigenvalues are complex in general.
By tuning the amount of non-Hermiticity, one can make two eigenvalues coalesce at one point, and the corresponding eigenvectors coalesce into one, creating EPs~\cite{heiss2004exceptional}.

EPs exhibit diverse applications.
Numerous observable phenomena provide at least indirect evidence of the physical effects caused by EPs, see~\cite{latinne1995laser, shuvalov2000singular, korsch2003stark}.
One crucial role of these points is to enhance the sensitivity of sensors and improve detection accuracy \cite{MR4206386, MR4512271, Chen:2017bjr, wiersig2014, wiersig2016}.
For a sensor operating at a second-order EPs—where both eigenvalues and their corresponding eigenvectors coalesce—a sufficiently small perturbation $\eta$ induces a complex frequency splitting $\Delta\omega$ (i.e., energy-level splitting) that scales as $\Delta\omega \propto \eta^{1/2}$~\cite{Chen:2017bjr}.
EPs were discovered in non-Hermitian systems possessing parity-time ($\mathcal{PT}$) symmetry, and the characteristic of EPs is the transition of the spectrum from purely real to complex~\cite{ruter2010observation, feng2014single}. 
In high-contrast subwavelength resonators, $\mathcal{PT}$ symmetric structures are considered to engineer EPs~\cite{MR4206386, MR4512271}.
In~\cite{dominguez2020}, researchers investigate an aluminum double-torsion-pillar oscillator, thereby revealing the phenomenon of EPs in the field of elastodynamics.
Regarding EPs phenomena in continuous elastic media, the study~\cite{rosa2021exceptional} describes the essential dynamic characteristics of $\mathcal{PT}$ symmetric elastic media exhibiting EPs, including their sensitivity to perturbations, which goes beyond the linear dependency commonly encountered in Hermitian systems.

High-contrast resonators in subwavelength metamaterials enable diverse wave phenomena~\cite{doi:10.1137/21M1449427, MR4563238, MR4206386, MR4512271, ammari2018subwavelength, AMMARI2021110594, MR3906861, ammari2020honeycomb}, with periodic elastic crystals being extensively studied both theoretically and experimentally. We investigate a high-contrast elastic system in $d$-dimensional space comprising hard elastic inclusions embedded in a soft elastic matrix, where time modulation is confined to resonator interiors. Departing from prior studies~\cite{chen2025, ren2025, li2024resonant}, we generalize the density to a diagonal matrix representation, enabling modeling of anisotropic media and significantly broadening the framework's applicability.

While EPs in elastic systems have been previously examined, this work establishes a systematic methodology for constructing asymptotic EPs in periodic elastic structures. Compared to acoustic systems, we demonstrate that constructing asymptotic EPs is more complex in elastic systems--with complexity governed not only by resonator geometry but also by the modulation scheme. Through analysis of representative geometries, we elucidate how structural design fundamentally determines EPs formation.
In the small-parameter limit $\eta\to 0$, asymptotic EPs for ODE systems can be constructed via asymptotic Floquet theory~\cite{MR4389752}. Although $n$-th-order asymptotic EPs are not genuine EPs in the strict sense, they hold substantial theoretical value and practical potential for non-Hermitian spectral analysis and functional device design.
In this work, while we consider first-order perturbations of the coefficient matrix, the perturbation order of the Floquet exponent matrix is higher. We thus focus on explicitly constructing first-order asymptotic EPs. This foundation facilitates the development of higher-order asymptotic EPs and provides a pathway to genuine EPs, while also outlining future research directions.

Given the periodic geometry, 
we apply the Floquet transform to the elastic wave equation, 
yielding the system described in~\eqref{f5}. 
We first analyze the static case (non-modulated) where the Lam\'e parameters and density remain constant both inside and outside the resonators. 
For time-harmonic elastic waves,
 this yields the asymptotic relationship 
\begin{align*}
	\omega=\mathcal{O}(\varepsilon^{1/2})
\end{align*}
between the quasi-frequency $\omega$ and density contrast parameter $\varepsilon$.
This establishes the theoretical foundation for identifying subwavelength frequency regimes in the modulated case.
We next consider resonator-interior time modulation of density, generating a coupled Lam\'e system~\eqref{f20} in the frequency domain. 
Within the subwavelength frequency regime, we derive an ODE system~ \eqref{f26} that predominantly governs the quasi-frequency spectrum of the original elastic system~\eqref{f5}. 
The corresponding  ODE system takes the form
\begin{align}\label{f47}
	\frac{{\rm d}\bm{x}(t)}{{\rm d}t} = \bm{A}(c,t)\bm{x}(t),
\end{align}
where $\bm{A}(c,\cdot): \mathbb{R} \to \mathbb{C}^{n \times n}$ is a $T$-periodic matrix-valued function.
A parameter value $c_0$ constitutes an EP for~\eqref{f47} when the eigenvalues and corresponding eigenvectors of the fundamental solution matrix $\bm{X}(c_0, T)$ simultaneously coalesce. 
To characterize EPs,   consider the  one-parameter system with $\bm{A}(t) \in \mathbb{C}^{n\times n}$ being $T$-periodic. Let $\bm{X}(t)$ be the fundamental matrix solution of
\begin{align*}
	\frac{{\rm d}\bm{x}(t)}{{\rm d}t} = \bm{A}(t)\bm{x}(t).
\end{align*}
By Floquet's theorem, there exists a constant matrix $\bm{F}$ and a $T$-periodic matrix function $\bm{P}(t)$ such that 
\begin{align*}
	\bm{X}(t) = \bm{P}(t)\exp\left(\bm{F} t\right).
\end{align*}
For each eigenvalue ${\rm i}\omega $ of $\bm{F}$, there is a Bloch solution $\bm{x}(t)$ which is $\omega$-quasiperiodic.
At an EPs $c_0$, Floquet's theorem yields the fundamental solution $\bm{X}(c_0, T) = 	\exp(\bm{F}(c_0)T)$, where the Floquet exponent matrix $\bm{F}(c_0)$ becomes non-diagonalizable – indicating coalescence of its eigenvalues and eigenvectors.
Consequently, constructing EPs reduces to analyzing the non-diagonalizability of the Floquet exponent matrix. The detailed derivation of this analysis is presented in Section~\ref{sec5}.
Finally, building upon the derived ODEs system~\eqref{f26} and the asymptotic EPs construction framework, we present several illustrative examples of EPs realization in three-dimensional elastic systems. 
These examples demonstrate the feasibility and adaptability of this approach in complex elastic media, highlighting the interplay between geometric design and modulation schemes.

The paper is structured as follows.
Section~\ref{sec2} formulates the research problem, establishes definitions and notation, and introduces foundational lemmas.
Section~\ref{sec3} analyzes the static case, employing layer potential theory to derive the asymptotic relationship between quasi-frequency $\omega$ and density contrast parameter $\varepsilon$.
Section~\ref{sec4} extends this framework to time-modulated systems with interior density variations, culminating in Theorem~\ref{th5} generalizing the static case results.
Section~\ref{sec5} develops a framework for generating asymptotic EPs in three-dimensional elastic systems with a single resonator.
Appendix~\ref{app1} establishes the inapplicability of our EPs construction methodology to two-dimensional single-resonator configurations.

\section{Problem Formulation}\label{sec2}
In this section, we present the research problem, introduce Floquet-Bloch theory governing periodic differential systems, and review the theoretical foundation of layer potential operators central to this work.

\subsection{Resonator Structure and Elastic Wave Equation}
We consider a periodic geometry generated by linearly independent vectors $\bm{l}_1, \dots, \bm{l}_d$ ($d \geq 2$). The lattice $\Lambda$ and unit cell $Y$ are given by:
\begin{align*}
    \Lambda &= \left\{ \sum_{i=1}^d k_i\bm{l}_i \mid  k_i \in \mathbb{Z},\ i = 1, \cdots, d \right\}, \\
    Y &= \left\{ \sum_{i=1}^d s_i \bm{l}_i \mid s_i \in [0, 1],\ i = 1, \cdots, d \right\}.
\end{align*}
The dual lattice $\Lambda^*$ is generated by basis  vectors $\bm{\gamma}_1, \cdots, \bm{\gamma}_d$ satisfying $\bm{l}_i \cdot \bm{\gamma}_j = 2\pi\delta_{ij}$ ($i,j = 1, \cdots, d$), and is defied as 
\begin{align*}
\Lambda^* = \left\{ \sum_{i=1}^d k_i \bm{\gamma}_i \mid k_i \in \mathbb{Z},\ i = 1, \cdots, d \right\}.
\end{align*}
The (space-)Brillouin zone $Y^*$ is defined as  the torus $Y^*=\mathbb{R}^{d}/\Lambda^*$.
The unit cell  contains  $N$ mutually disjoint connected domains $D_{i}$ ($i=1,2,\cdots, N$) with a connected $C^2$- boundary. The system of resonators $D \subset Y$ and the periodic crystal of resonators $\mathcal{C}$ are then  given by 
\begin{align*}
D=\bigcup_{i=1,\cdots,N}D_{i},
\quad
 \mathcal{C}=\bigcup_{\bm{m}\in\Lambda}D+\bm{m}.
\end{align*}

For the displacement field $\bm{u} = (u_1, u_2, \dots, u_d)^\top$, the Lam\'e  operator is defined as
\begin{align*}
\mathfrak{L}^{\widehat{\lambda},\widehat{\mu}}\bm{u}&=\widehat{\mu}\Delta \bm{u}+(\widehat{\lambda}+\widehat{\mu})\nabla\nabla\cdot \bm{u},
\end{align*}
with the conormal derivative given by
\begin{align*}
\frac{\partial \bm{u}}{\partial\bm{\nu}}&=\widehat{\lambda}(\nabla\cdot \bm{u})\bm{\nu}+\widehat{\mu}(\nabla \bm{u}+\nabla \bm{u}^\top)\bm{\nu},
\end{align*}
where $\bm{\nu}$ denotes  the outward unit normal to the boundary $\partial \mathcal{C}$.

We consider the elastic wave equation:
\begin{align}\label{f45}
\bm {\rho}(\bm{x},t)\partial^{2}_{t}\bm{u}(\bm{x},t)-\mathfrak{L}^{\widehat{\lambda},\widehat{\mu}}\bm{u}(\bm{x},t)=0, \quad \bm{x}\in \mathbb{R}^{d}, t\in \mathbb{R},
\end{align}
where the density matrix $\bm{\rho}(\bm{x},t)=\text{diag}(\rho_1(\bm{x},t), \rho_2(\bm{x},t), \cdots,\rho_d(\bm{x},t))$.
In Section~\ref{sec3}, we first analyze the non-modulated case, assuming the Lam\'{e} parameters $\widehat{\lambda}(\bm{x},t)$, $\widehat{\mu}(\bm{x},t)$, and density matrix $\bm{\rho}(\bm{x},t)$ are piecewise constant in space and time
\begin{align}\label{f46}
\bm{\rho}(\bm{x},t)=
\begin{cases}
\bm{\rho},\quad \bm{x}\in \mathbb{R}^{d}\backslash\overline{\mathcal{C}},\\
\widetilde{\bm{\rho}},\quad \bm{x}\in\mathcal{C},
\end{cases}
\quad
(\widehat{\lambda}(\bm{x},t),\widehat{\mu}(\bm{x},t))=
\begin{cases}
(\lambda,\mu),\quad \bm{x}\in \mathbb{R}^{d}\backslash \overline{\mathcal{C}},\\
(\widetilde{\lambda}, \widetilde{\mu}),\quad \bm{x}\in\mathcal{C},
\end{cases}
\end{align}
where
$\bm{\rho}=\operatorname{diag}(\rho_1,\rho_2,\cdots,\rho_d)$ and $\widetilde{\bm{\rho}}=\operatorname{diag}(\widetilde{\rho}_1,\widetilde{\rho}_2,\cdots,\widetilde{\rho}_d)$.
The background Lam\'{e} parameters satisfy
\begin{align*}
\mu>0,\quad d\lambda+2\mu>0.
\end{align*}

The Lam\'e parameters and mass densities for rigid versus flexible elastic materials are compared through the scaling relations
\begin{align}\label{f7-}
(\widetilde{\lambda}, \widetilde{\mu})=\frac{1}{\delta}(\lambda,\mu),
\quad 
\widetilde{\bm{\rho}}=\frac{1}{\varepsilon}\bm{\rho}.
\end{align}
The contrast parameters $\delta$ and $\varepsilon$ satisfy the asymptotic conditions
\begin{align}\label{relation}
	\delta\rightarrow0,\quad \varepsilon\rightarrow0,\quad \frac{\delta}{\varepsilon}\leqslant\mathcal{O}(1).
\end{align}

\subsection{Floquet–Bloch Theory}

Floquet theory provides a powerful framework for analyzing differential equations with periodic coefficients in time.
Consider the  ODE system
\begin{align}\label{eq:floquet_system}
\begin{cases}
\displaystyle
\frac{{\rm d}\bm{x}(t)}{{\rm d}t} = \bm{A}(t) \bm{x}(t), \\[2mm]
\bm{x}(0) = \bm{x}_0,
\end{cases}
\end{align}
where $\bm{A}(t)$ is a $T$-periodic $n \times n$ complex matrix-valued function.
The theoretical foundations and applications discussed below are detailed in~\cite{doi:10.1137/21M1449427, MR4563238}.

\begin{theo}[Floquet's theorem]
Let $\bm{X}(t)$ be the fundamental matrix solution of \eqref{eq:floquet_system} satisfying the initial condition $\bm{X}(0) = \bm{I}_n$, 
where $\bm{I}_n$ is the $n \times n$ identity matrix. 
There exist a constant matrix $\bm{F}$ and a $T$-periodic matrix function $\bm{P}(t)$ such that
\begin{align*}
	\bm{X}(t) = \bm{P}(t) \exp(\bm{F}t).
\end{align*}
\end{theo}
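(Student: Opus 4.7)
The plan is to exploit the $T$-periodicity of $\bm{A}(t)$ to show that $\bm{X}(t+T)$ is itself a fundamental matrix solution, then extract the periodic factor via a matrix logarithm of the monodromy. First I would observe that since $\bm{A}(t+T)=\bm{A}(t)$, the time shift $\bm{Y}(t):=\bm{X}(t+T)$ satisfies the same ODE $\bm{Y}'(t)=\bm{A}(t)\bm{Y}(t)$. Because $\bm{X}(t)$ is a fundamental matrix (in particular, $\bm{X}(t)$ is invertible for all $t$ by Liouville's formula applied to $\det\bm{X}(t)=\exp\int_0^t\operatorname{tr}\bm{A}(s)\,{\rm d}s$), uniqueness of solutions for linear ODEs yields a constant invertible matrix $\bm{C}$ with $\bm{X}(t+T)=\bm{X}(t)\bm{C}$. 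Setting $t=0$ and using $\bm{X}(0)=\bm{I}_n$ gives $\bm{C}=\bm{X}(T)$, the monodromy matrix.

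Next I would invoke the existence of a complex logarithm for any invertible matrix: since $\bm{X}(T)\in\mathrm{GL}_n(\mathbb{C})$, there exists $\bm{F}\in\mathbb{C}^{n\times n}$ such that $\exp(\bm{F}T)=\bm{X}(T)$. This step is the main obstacle, and I would justify it by passing to Jordan normal form $\bm{X}(T)=\bm{S}\bm{J}\bm{S}^{-1}$ and constructing $\log\bm{J}$ block by block via the series $\log(\mu\bm{I}+\bm{N})=\log(\mu)\bm{I}+\sum_{k\geq 1}(-1)^{k+1}\mu^{-k}\bm{N}^k/k$ on each Jordan block (convergent because the nilpotent part $\bm{N}$ makes the sum finite), then dividing by $T$ to obtain $\bm{F}$. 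Invertibility of $\bm{X}(T)$ guarantees that every eigenvalue $\mu$ is nonzero so the branch $\log\mu$ is well-defined.

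Having secured $\bm{F}$, I would define
\begin{align*}
\bm{P}(t):=\bm{X}(t)\exp(-\bm{F}t)
\end{align*}
and verify $T$-periodicity directly by computing
\begin{align*}
\bm{P}(t+T)=\bm{X}(t+T)\exp(-\bm{F}(t+T))=\bm{X}(t)\bm{X}(T)\exp(-\bm{F}T)\exp(-\bm{F}t)=\bm{X}(t)\exp(-\bm{F}t)=\bm{P}(t),
\end{align*}
where I have used the commutativity $\exp(-\bm{F}(t+T))=\exp(-\bm{F}T)\exp(-\bm{F}t)$ and the defining identity $\bm{X}(T)=\exp(\bm{F}T)$. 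Rearranging yields $\bm{X}(t)=\bm{P}(t)\exp(\bm{F}t)$, completing the proof. I would also note that $\bm{P}(t)$ inherits continuity (indeed differentiability) from $\bm{X}(t)$ and $\exp(-\bm{F}t)$, and that $\bm{P}(0)=\bm{I}_n$, which is the normalization consistent with the initial condition. The only genuine subtlety is the matrix-logarithm step; everything else is algebraic manipulation of the fundamental matrix.
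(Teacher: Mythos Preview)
Your proof is correct and is the standard argument for Floquet's theorem. The paper itself does not supply a proof of this statement; it presents the theorem as a classical result and refers to \cite{doi:10.1137/21M1449427, MR4563238} for the theoretical foundations, so there is no paper-specific approach to compare against.
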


In particular, $\bm{P}(0) = \bm{I}_n$, and hence the monodromy matrix satisfies $\bm{X}(T) = \exp(\bm{F}T)$.
Floquet's theorem implies that for each eigenvalue $\mathrm{i}\omega$ of $\bm{F}$, 
there exists a Bloch solution $\bm{x}(t)$ that is $\omega$-quasiperiodic, i.e.,
\begin{align*}
	\bm{x}(t + T) = e^{\mathrm{i}\omega T} \bm{x}(t).
\end{align*}
Since $\omega$ is defined modulo the modulation frequency $\Omega = 2\pi/T$,  we define the (time-)Brillouin zone $Y^*_t$ as the quotient space
\begin{align*}
	Y^*_t = \mathbb{C} / (\Omega \mathbb{Z}).
\end{align*}

\begin{defi}
The value $e^{\mathrm{i}\omega}$ is called a {characteristic multiplier}. 
We refer to $\omega$ as the {quasi-frequency} and to $\mathrm{i}\omega$ as the {Floquet exponent}.
\end{defi}

When the matrix $\bm{A}$ is time-independent, the solution to system~\eqref{eq:floquet_system} takes the form
\begin{align*}
\bm{x}(t) = \exp(\bm{A} t)  \bm{x}(0).
\end{align*}
Consequently, the Floquet exponents coincide with the eigenvalues of $\bm{A}$. 
Since these exponents are defined modulo $\mathrm{i}\Omega$, we introduce the \emph{folding number} to characterize this periodic identification.

\begin{defi}[Folding number]
For a time-independent matrix $\bm{A}$, let $\omega_A$ denote the imaginary part of one of its eigenvalues. 
Then $\omega_A$ admits a unique decomposition
\begin{align*}
\omega_A = \omega_0 + m\Omega,
\end{align*}
where $\omega_0 \in (-\Omega/2, \Omega/2]$ and $m \in \mathbb{Z}$. 
The integer $m$ is called the \emph{folding number}.
\end{defi}

\begin{defi}[Floquet transform]
Given a lattice $\Lambda \subset \mathbb{R}^d$, its dual lattice $\Lambda^*$, 
and the fundamental domain $Y^* = \mathbb{R}^d / \Lambda^*$, 
the Floquet transform of a function $f \in L^2(\mathbb{R}^d)$ is defined as
\begin{align*}
	\mathscr{F}[f](\bm{x}, \bm{\alpha}) 
	= \sum_{\bm{\ell} \in \Lambda} f(\bm{x} - \bm{\ell}) e^{\mathrm{i} \bm{\alpha} \cdot \bm{x}},
	\quad \bm{x} \in \mathbb{R}^d,\ \bm{\alpha} \in Y^*.
\end{align*}
The parameter $\bm{\alpha}$ is called the \emph{quasiperiodicity} or \emph{quasimomentum}.
\end{defi}

The Floquet transform serves as a periodic analogue of the Fourier transform. 
Specifically, for fixed quasimomentum $\bm{\alpha} \in Y^*$, the transformed function $\mathscr{F}[f]$ exhibits:
\begin{itemize}
    \item Periodicity in $\bm{\alpha}$: $\mathscr{F}[f](\bm{x}, \bm{\alpha} + \bm{q}) = \mathscr{F}[f](\bm{x}, \bm{\alpha})$ for all $\bm{q} \in \Lambda^*$,
    \item Quasiperiodicity in $\bm{x}$: $\mathscr{F}[f](\bm{x} + \bm{\ell}, \bm{\alpha}) = e^{\mathrm{i} \bm{\alpha} \cdot \bm{\ell}} \mathscr{F}[f](\bm{x}, \bm{\alpha})$ for all $\bm{\ell} \in \Lambda$.
\end{itemize}
The quasiperiodicity relation
\begin{align*}
\mathscr{F}[f](\bm{x} + \bm{\ell}, \bm{\alpha}) = e^{\mathrm{i} \bm{\alpha} \cdot \bm{\ell}} \mathscr{F}[f](\bm{x}, \bm{\alpha}), \quad \bm{\ell} \in \Lambda,
\end{align*}
known as the \emph{Floquet condition}, implies that the function is completely determined by its restriction to a single unit cell $Y$ of the lattice $\Lambda$.
For any linear partial differential operator $\mathcal{G}(\bm{x}, \partial_{\bm{x}})$ with $\Lambda$-periodic coefficients, the Floquet transform satisfies the commutation relation:
\begin{align*}
\mathscr{F}[\mathcal{G} f](\bm{x}, \bm{\alpha}) 
= \mathcal{G}(\bm{x}, \partial_{\bm{x}}) \mathscr{F}[f](\bm{x}, \bm{\alpha}).
\end{align*}
The following theorem holds (see Theorem~7.1 in \cite{2009Layer}).
\begin{theo}[Plancherel-type theorem]
The Floquet transform
\begin{align*}
\mathscr{F} : L^2(\mathbb{R}^d) \to L^2(Y^*; L^2(Y))
\end{align*}
is an isometry. Its inverse is given by
\begin{align*}
\mathscr{F}^{-1}[g](\bm{x}) = \frac{1}{|Y^*|} \int_{Y^*} g(\bm{x}, \bm{\alpha})  \mathrm{d}\bm{\alpha},
\end{align*}
where $g \in L^2(Y^*; L^2(Y))$ is extended from the unit cell $Y$ to $\bm{x} \in \mathbb{R}^d$ via the Floquet condition (i.e., quasiperiodicity in $\bm{x}$).
\end{theo}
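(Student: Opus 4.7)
The strategy is to reduce to a dense subspace and exploit orthogonality of characters on $Y^*$. I would take $C_c^\infty(\mathbb{R}^d)$ as the dense subspace, where for each fixed $\bm{x}$ the series defining $\mathscr{F}[f](\bm{x},\bm{\alpha})$ is in fact a finite sum (with the exponential weight understood in the convention that produces the quasiperiodicity $\mathscr{F}[f](\bm{x}+\bm{\ell},\bm{\alpha})=e^{i\bm{\alpha}\cdot\bm{\ell}}\mathscr{F}[f](\bm{x},\bm{\alpha})$ stated after the definition). The key analytic ingredient is the orthogonality relation on the torus,
$$\frac{1}{|Y^*|}\int_{Y^*}e^{i\bm{\alpha}\cdot(\bm{\ell}-\bm{\ell}')}\,d\bm{\alpha}=\delta_{\bm{\ell},\bm{\ell}'},\qquad \bm{\ell},\bm{\ell}'\in\Lambda,$$
which is an immediate consequence of the defining duality $\bm{l}_i\cdot\bm{\gamma}_j=2\pi\delta_{ij}$.

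For $f\in C_c^\infty(\mathbb{R}^d)$, I would expand $\|\mathscr{F}[f]\|^2_{L^2(Y^*;L^2(Y))}$, interchange the (finite) lattice sums with both integrals, and apply the orthogonality identity to collapse the double lattice sum to the diagonal contribution $\sum_{\bm{\ell}\in\Lambda}\int_Y|f(\bm{x}-\bm{\ell})|^2\,d\bm{x}$. Since the translates $\{Y+\bm{\ell}:\bm{\ell}\in\Lambda\}$ tile $\mathbb{R}^d$ up to a null set, this reassembles into $\int_{\mathbb{R}^d}|f(\bm{x})|^2\,d\bm{x}=\|f\|^2_{L^2(\mathbb{R}^d)}$. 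Hence $\mathscr{F}$ is isometric on the dense subspace and extends uniquely to a linear isometry on all of $L^2(\mathbb{R}^d)$. For the inverse formula I would substitute $\mathscr{F}[f]$ into the stated $\mathscr{F}^{-1}$ and invoke orthogonality once more: integration over $Y^*$ isolates the $\bm{\ell}=\bm{0}$ term and returns $f(\bm{x})$, where the quasiperiodic extension of $g=\mathscr{F}[f]$ from $Y$ to $\mathbb{R}^d$ is used to make the right-hand side well-defined pointwise in $\bm{x}$. Surjectivity of $\mathscr{F}$ onto the subspace of $L^2(Y^*;L^2(Y))$ carved out by the Floquet condition then follows from the isometry property together with the density of $\mathscr{F}(C_c^\infty)$ in this target space.

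The main obstacle I anticipate is controlling the convergence of the defining series for general $L^2$ data, since pointwise absolute convergence of $\sum_{\bm{\ell}\in\Lambda} f(\bm{x}-\bm{\ell})e^{i\bm{\alpha}\cdot\bm{\ell}}$ cannot be expected without additional decay. The density-and-continuity argument circumvents this, but one must verify that the Floquet/quasiperiodicity structure in $\bm{x}$ and the $\Lambda^*$-periodicity in $\bm{\alpha}$ persist under the limiting procedure; this is routine since both are continuous conditions preserved under $L^2$-limits in the relevant variable. A secondary technical point is the identification of the precise target space: the inverse formula $\mathscr{F}^{-1}[g](\bm{x})=|Y^*|^{-1}\int_{Y^*}g(\bm{x},\bm{\alpha})\,d\bm{\alpha}$ is defined on $\mathbb{R}^d$ only after $g$ has been extended quasiperiodically from $Y$, so the stated isomorphism is really between $L^2(\mathbb{R}^d)$ and the closed subspace of $L^2(Y^*;L^2(Y))$ whose elements admit such an extension. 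I would address this by noting that this subspace is exactly the image of $C_c^\infty(\mathbb{R}^d)$ under $\mathscr{F}$, and conclude by a standard closure argument.
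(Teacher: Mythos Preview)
Your approach is correct and is the standard argument for this result. Note, however, that the paper does not actually prove this theorem: it simply states it and cites Theorem~7.1 in \cite{2009Layer}. So there is no ``paper's own proof'' to compare against beyond the reference. Your density argument via $C_c^\infty(\mathbb{R}^d)$, the orthogonality relation on $Y^*$, and the tiling $\mathbb{R}^d=\bigcup_{\bm{\ell}\in\Lambda}(Y+\bm{\ell})$ are precisely the ingredients one finds in standard treatments (including the cited source), and your handling of the convergence issue by working first on a dense subspace is the right way to proceed.
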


We apply the Floquet transform jointly in space $\bm{x}$ and time $t$ to the elastic wave equation~\eqref{f45}, seeking solutions that are quasiperiodic in $t$. The transformed solution vector is defined as 
\begin{align*}
	\bm{U} = \mathscr{F}_{\!\bm{x},t}[\bm{u}] := \big( \mathscr{F}_{\!\bm{x},t}[u_{1}], \mathscr{F}_{\!\bm{x},t}[u_{2}], \dots, \mathscr{F}_{\!\bm{x},t}[u_{d}] \big)^\top.
\end{align*}
This yields the transformed system
\begin{align}\label{f5}
\begin{cases}
\bm{\rho}(\bm{x},t) \partial_{t}^2 \bm{U}(\bm{x},t) - \mathfrak{L}^{\widehat{\lambda},\widehat{\mu}} \bm{U}(\bm{x},t) = \mathbf{0}, \\[2pt]
\bm{U}(\bm{x} + \bm{\ell}, t) = e^{\mathrm{i}\bm{\alpha} \cdot \bm{\ell}} \bm{U}(\bm{x},t) & \forall \bm{\ell} \in \Lambda, \\[2pt]
\bm{U}(\bm{x}, t + T) = e^{\mathrm{i}\omega T} \bm{U}(\bm{x},t),
\end{cases}
\end{align}
where the spatial quasiperiodicity is characterized by quasimomentum $\bm{\alpha}$, and the temporal quasiperiodicity by quasi-frequency $\omega$.

Since $\bm{U}(\bm{x},t)e^{-\mathrm{i}\omega t}$ is $T$-periodic in $t$, it admits the Fourier series expansion
\begin{align*}
\bm{U}(\bm{x},t) = e^{\mathrm{i}\omega t} \sum_{n=-\infty}^{\infty} \bm{v}_{n}(\bm{x}) e^{\mathrm{i} n \Omega t},
\end{align*}
where $\Omega = 2\pi/T$ is the modulation frequency.
The periodic structure of the (time-)Brillouin zone $Y^*_t$ prevents direct application of conventional subwavelength frequency concepts to quasi-frequencies. 
We specifically consider the scaling $\Omega = \mathcal{O}(\varepsilon^{1/2})$ (as established in Section~\ref{sec4}), with $Y^*_t$ defined at this same scaling. 
By construction, $Y^*_t$ contains infinitely many quasi-frequencies resulting from frequency folding outside the subwavelength regime. 
Following \cite{AMMARI2021110594}, we define the \emph{subwavelength quasi-frequencies} for elastic wave propagation as those satisfying specific scaling conditions relative to $\delta$ and $\varepsilon$.

\begin{defi}[Subwavelength quasi-frequency]\label{defi1}
A quasi-frequency $\omega = \omega(\delta, \varepsilon) \in Y^*_t$ of system~\eqref{f5} is called \emph{subwavelength} if there exists a corresponding nontrivial solution $\bm{U}(\bm{x},t)$, continuous in $\delta$ and $\varepsilon$, with the Fourier expansion
\begin{align*}
\bm{U}(\bm{x},t) = e^{\mathrm{i}\omega t} \sum_{n=-\infty}^{\infty} \bm{v}_{n}(\bm{x}) e^{\mathrm{i} n \Omega t},
\end{align*}
and for some integer-valued function $M = M(\delta,\varepsilon)$, satisfying as $\delta \to 0$, $\varepsilon \to 0$:
\begin{itemize}
    \item The energy concentration condition:
    \begin{align*}
    \sum_{n=-\infty}^{\infty} \|\bm{v}_{n}\|_{L^2(Y)} 
    = \sum_{n=-M}^{M} \|\bm{v}_{n}\|_{L^2(Y)} + o(1),
    \end{align*}
    
    \item The asymptotic limits:
    \begin{align*}
    \omega \to 0 \quad \text{and} \quad M\Omega \to 0,
    \end{align*}
\end{itemize}
where the $L^2$-norm over the unit cell $Y$ is defined as
\begin{align*}
\|\bm{v}_n\|_{L^2(Y)} = \left( \int_{Y} \bm{v}_n(\bm{x}) \cdot \overline{\bm{v}_n(\bm{x})}  \mathrm{d}\bm{x} \right)^{1/2}.
\end{align*}
\end{defi}

\subsection{Layer Potential Operators}

We introduce the $d$-dimensional quasiperiodic fundamental solution matrix 
\begin{align*}
\bm{G}^{\bm{\alpha},\Theta,\omega}(\bm{x},\bm{y}) = \big( G^{\bm{\alpha},\Theta,\omega}_{ij}(\bm{x},\bm{y}) \big)_{i,j=1}^d,
\end{align*}
which satisfies the partial differential equation
\begin{align*}
\left( \mathfrak{L}^{\lambda,\mu} + \omega^2 \Theta \right) \bm{G}^{\bm{\alpha},\Theta,\omega}(\bm{x},\bm{y})
= \sum_{\bm{\ell} \in \Lambda} \delta(\bm{x} - \bm{y} - \bm{\ell}) e^{\mathrm{i}\bm{\ell}\cdot\bm{\alpha}} \bm{I}_d,
\end{align*}
where $\Theta = \operatorname{diag}(\theta_1, \theta_2, \dots, \theta_d)$ and we assume the non-resonance condition
\begin{align*}
\frac{\omega \sqrt{\theta_i}}{\sqrt{\mu}} \neq |\bm{q} + \bm{\alpha}| \quad \text{for all} \quad i = 1, 2, \dots, d, \quad \bm{q} \in \Lambda^*.
\end{align*}
Applying the Poisson summation formula
\begin{equation}
\sum_{\bm{\ell} \in \Lambda} \delta(\bm{x} - \bm{y} - \bm{\ell}) e^{\mathrm{i} \bm{\ell} \cdot \bm{\alpha}} \bm{I}_d
= \frac{1}{|Y|} \sum_{\bm{q} \in \Lambda^*} e^{\mathrm{i} (\bm{q} + \bm{\alpha}) \cdot (\bm{x} - \bm{y})} \bm{I}_d,
\end{equation}
the diagonal entries $(i = 1,\dots,d)$ of the fundamental solution are given by
\begin{align} \label{f32}
G^{\bm{\alpha},\Theta,\omega}_{ii}(\bm{x},\bm{y}) 
&= \frac{1}{|Y|} \sum_{\bm{q} \in \Lambda^*}
\Bigg[
  \Bigg( \frac{1}{\omega^2 \theta_{i} - \mu|\bm{q} + \bm{\alpha}|^2} 
  - (\lambda + \mu) \sum_{\substack{s=1 \\ s \neq i}}^d 
      \frac{(\bm{q} + \bm{\alpha})_s^2}
           {(\omega^2 \theta_{i} - \mu|\bm{q} + \bm{\alpha}|^2)
            (\omega^2 \theta_{s} - \mu|\bm{q} + \bm{\alpha}|^2)}
  \Bigg) \nonumber \\
&\quad \times \left( 1 - (\lambda + \mu) \sum_{s=1}^d 
      \frac{(\bm{q} + \bm{\alpha})_s^2}{\omega^2 \theta_{s} - \mu|\bm{q} + \bm{\alpha}|^2} 
  \right)^{-1} 
\Bigg] e^{\mathrm{i}(\bm{q} + \bm{\alpha}) \cdot (\bm{x} - \bm{y})},
\end{align}
and the off-diagonal entries $(i \neq j)$ by
\begin{align} \label{eq:fundamental_offdiag}
G^{\bm{\alpha},\Theta,\omega}_{ij}(\bm{x},\bm{y}) 
&= \frac{1}{|Y|} (\lambda + \mu) \sum_{\bm{q} \in \Lambda^*}
\Bigg[
  \frac{(\bm{q} + \bm{\alpha})_i (\bm{q} + \bm{\alpha})_j}
       {(\omega^2 \theta_{i} - \mu|\bm{q} + \bm{\alpha}|^2)
        (\omega^2 \theta_{j} - \mu|\bm{q} + \bm{\alpha}|^2)} \nonumber \\
&\quad \times \left( 1 - (\lambda + \mu) \sum_{s=1}^d 
      \frac{(\bm{q} + \bm{\alpha})_s^2}{\omega^2 \theta_{s} - \mu|\bm{q} + \bm{\alpha}|^2} 
  \right)^{-1} 
\Bigg] e^{\mathrm{i}(\bm{q} + \bm{\alpha}) \cdot (\bm{x} - \bm{y})},
\end{align}
where $(\bm{q} + \bm{\alpha})_s$ denotes the $s$-th component of $\bm{q} + \bm{\alpha}$.

For the region $D$ defined previously, we introduce the single-layer potential $\mathcal{S}^{\bm{\alpha},\Theta,\omega}_{D}$
and the Neumann-Poincar\'{e} operator $(\mathcal{K}^{-\bm{\alpha},\Theta,\omega}_{D})^*$ associated with the fundamental solution $\bm{G}^{\bm{\alpha},\Theta,\omega}(\bm{x},\bm{y})$. For a given density $\bm{\varphi} \in H^{-1/2}(\partial D)^d$,
\begin{align}
\mathcal{S}^{\bm{\alpha},\Theta,\omega}_{D}[\bm{\varphi}](\bm{x})
&= \int_{\partial D} \bm{G}^{\bm{\alpha},\Theta,\omega}(\bm{x},\bm{y}) \bm{\varphi}(\bm{y})  \mathrm{d}\sigma(\bm{y}), \label{f2} \\
(\mathcal{K}^{-\bm{\alpha},\Theta,\omega}_{D})^*[\bm{\varphi}](\bm{x})
&= \mathrm{p.v.} \int_{\partial D} \frac{\partial \bm{G}^{\bm{\alpha},\Theta,\omega}(\bm{x},\bm{y})}{\partial \bm{\nu}(\bm{x})} \bm{\varphi}(\bm{y})  \mathrm{d}\sigma(\bm{y}). \nonumber
\end{align}
The potential $\mathcal{S}^{\bm{\alpha},\Theta,\omega}_{D}[\bm{\varphi}]$ satisfies 
\begin{align*}
(\mathfrak{L}^{\lambda,\mu} + \omega^2 \Theta) \bm{u} = \bm{0}.
\end{align*}
Its normal derivative exhibits the jump relation 
\begin{align} \label{f11}
\left. \frac{\partial \mathcal{S}^{\bm{\alpha},\Theta,\omega}_{D}[\bm{\varphi}]}{\partial \bm{\nu}} \right|_{\pm} (\bm{x})
= \left( \pm \frac{1}{2} \mathcal{I} + (\mathcal{K}^{-\bm{\alpha},\Theta,\omega}_{D})^* \right) [\bm{\varphi}](\bm{x}), \quad 
\quad a.e.\,\bm{x}\in \partial D,
\end{align}
where $\mathcal{I}$ denotes the identity operator. 
For further details, see~\cite[Chapter~V]{10.1115/1.3153629}.

Since this study focuses on the low-frequency regime, the fundamental solution admits the following asymptotic expansion.

\begin{lemm}\label{lem6}
For $\bm{\alpha } \neq 0$, and $\omega \to 0$, the fundamental solution $\bm{G}^{\bm{\alpha},\Theta,\omega}(\bm{x},\bm{y})$ admits the asymptotic decomposition
\begin{align*}
G^{\bm{\alpha},\Theta,\omega}_{ij}(\bm{x},\bm{y})
= G^{\bm{\alpha},0,0}_{ij}(\bm{x},\bm{y}) 
+ \sum_{k=1}^{\infty} \omega^{2k} G^{\bm{\alpha},\Theta,0}_{k,ij}(\bm{x},\bm{y}),
\end{align*}
where the components satisfy:

\begin{enumerate}[label=(\arabic*),leftmargin=*]
  \item \textbf{Diagonal components} ($i = j$):
  \begin{align*}
  G^{\bm{\alpha},0,0}_{ii}(\bm{x},\bm{y}) 
  &= -\frac{1}{|Y|} \sum_{\bm{q} \in \Lambda^*} \Bigg( \frac{1}{\mu|\bm{q} + \bm{\alpha}|^2} 
  - \frac{\lambda + \mu}{\mu(\lambda + 2\mu)} \frac{(\bm{q} + \bm{\alpha})_i^2}{|\bm{q} + \bm{\alpha}|^{4}} \Bigg) 
  e^{\mathrm{i}(\bm{q} + \bm{\alpha}) \cdot (\bm{x} - \bm{y})}, \\
  G^{\bm{\alpha},\Theta,0}_{k,ii}(\bm{x},\bm{y}) 
  &= -\frac{1}{|Y|} \sum_{\bm{q} \in \Lambda^*} \Bigg[ \frac{1}{(\lambda + 2\mu)|\bm{q} + \bm{\alpha}|^2} 
  \sum_{\substack{m,n \geq 0 \\ m + n = k}} \Bigg( \frac{\theta_i^{m}}{(\mu|\bm{q} + \bm{\alpha}|^2)^{m}} \\
  &\quad + (\lambda + \mu) \sum_{\substack{m_1,m_2 \geq 0 \\ m_1 + m_2 = m}} 
  \frac{\theta_i^{m_1} \Big( \sum_{\substack{s=1 \\ s \neq i}}^d (\bm{q} + \bm{\alpha})_s^2 \theta_s^{m_2} \Big)}{(\mu|\bm{q} + \bm{\alpha}|^2)^{m+1}} \Bigg) 
  (-1)^{n} T_{n} \Bigg] e^{\mathrm{i}(\bm{q} + \bm{\alpha}) \cdot (\bm{x} - \bm{y})}.
  \end{align*} 

  \item \textbf{Off-diagonal components} ($i \neq j$):
  \begin{align*}
  G^{\bm{\alpha},0,0}_{ij}(\bm{x},\bm{y}) 
  &= \frac{1}{|Y|} \sum_{\bm{q} \in \Lambda^*} \Bigg( \frac{\lambda + \mu}{\mu(\lambda + 2\mu)} 
  \frac{(\bm{q} + \bm{\alpha})_i (\bm{q} + \bm{\alpha})_j}{|\bm{q} + \bm{\alpha}|^{4}} \Bigg) 
  e^{\mathrm{i}(\bm{q} + \bm{\alpha}) \cdot (\bm{x} - \bm{y})}, \\
  G^{\bm{\alpha},\Theta,0}_{k,ij}(\bm{x},\bm{y}) 
  &= \frac{1}{|Y|} \sum_{\bm{q} \in \Lambda^*} \Bigg[ \frac{(\lambda + \mu)(\bm{q} + \bm{\alpha})_i (\bm{q} + \bm{\alpha})_j}{\mu(\lambda + 2\mu)|\bm{q} + \bm{\alpha}|^{4}} \\
  &\quad \times \sum_{\substack{m,n \geq 0 \\ m + n = k}} \Bigg( 
  \frac{ \sum_{\substack{m_1,m_2 \geq 0 \\ m_1 + m_2 = m}} \theta_i^{m_1} \theta_j^{m_2} }{(\mu|\bm{q} + \bm{\alpha}|^2)^{m}} 
  (-1)^{n} T_{n} \Bigg) \Bigg] e^{\mathrm{i}(\bm{q} + \bm{\alpha}) \cdot (\bm{x} - \bm{y})}.
  \end{align*}
\end{enumerate}
Here $T_0 = 1$, and for $n \geq 1$,
\begin{align*}
T_n &= \frac{\mu(\lambda + \mu)}{\lambda + 2\mu} 
\frac{ \sum_{s=1}^d (\bm{q} + \bm{\alpha})_s \theta_s^n }{(\mu|\bm{q} + \bm{\alpha}|^2)^{n+1}} \\
&+ \left( \frac{\mu(\lambda + \mu)}{\lambda + 2\mu} \right)^2 
\sum_{\substack{n_1,n_2 \geq 0 \\ n_1 + n_2 = n}} 
\frac{ \prod_{j=1}^2 \left( \sum_{s=1}^d (\bm{q} + \bm{\alpha})_s \theta_s^{n_j} \right) }{(\mu|\bm{q} + \bm{\alpha}|^2)^{n+1}} \\
&+ \cdots \\
&+ \left( \frac{\mu(\lambda + \mu)}{\lambda + 2\mu} \right)^{n-1} 
\sum_{\substack{n_1,\dots, n_{n-1} \geq 0 \\ n_1 + \cdots + n_{n-1} = n}} 
\frac{ \prod_{j=1}^{n-1} \left( \sum_{s=1}^d (\bm{q} + \bm{\alpha})_s \theta_s^{n_j} \right) }{(\mu|\bm{q} + \bm{\alpha}|^2)^{n+1}} \\
&+ \left( \frac{\mu(\lambda + \mu)}{\lambda + 2\mu} \right)^{n} 
\frac{ \left( \sum_{s=1}^d (\bm{q} + \bm{\alpha})_s \theta_s \right)^n }{(\mu|\bm{q} + \bm{\alpha}|^2)^{n+1}}.
\end{align*}
\end{lemm}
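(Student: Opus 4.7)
The plan is to take the explicit series representations \eqref{f32} and \eqref{eq:fundamental_offdiag} of $G^{\bm{\alpha},\Theta,\omega}_{ij}$ and expand every $\omega$-dependent factor as a formal power series in $\omega^{2}$, then reorganize the result mode-by-mode in $\bm{q}\in\Lambda^{*}$. Since $\bm{\alpha}\neq 0$ lies in the (open) Brillouin zone, the quantity $|\bm{q}+\bm{\alpha}|$ stays bounded below by some $c_{0}(\bm{\alpha})>0$ uniformly in $\bm{q}\in\Lambda^{*}$, so for $\omega$ small enough the ratio $\omega^{2}\theta_{i}/(\mu|\bm{q}+\bm{\alpha}|^{2})$ is uniformly less than $1$ and all the geometric series I am about to write converge on each Fourier mode.

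First I would expand the elementary factor
\begin{align*}
\frac{1}{\omega^{2}\theta_{i}-\mu|\bm{q}+\bm{\alpha}|^{2}}
=-\frac{1}{\mu|\bm{q}+\bm{\alpha}|^{2}}\sum_{m=0}^{\infty}\frac{\omega^{2m}\theta_{i}^{m}}{(\mu|\bm{q}+\bm{\alpha}|^{2})^{m}},
\end{align*}
and analogously for the index $j$ or $s$. Substituting this into the first bracketed factors in \eqref{f32} and \eqref{eq:fundamental_offdiag} produces, for each $\bm{q}$, the obvious convolution-type power series in $\omega^{2}$ whose coefficients involve sums over decompositions $m_{1}+m_{2}=m$ of products $\theta_{i}^{m_{1}}\theta_{j}^{m_{2}}$; these are exactly the inner sums appearing in the statement of the lemma.

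Next I would deal with the inverse factor $\bigl(1-(\lambda+\mu)\sum_{s=1}^{d}(\bm{q}+\bm{\alpha})_{s}^{2}(\omega^{2}\theta_{s}-\mu|\bm{q}+\bm{\alpha}|^{2})^{-1}\bigr)^{-1}$. I would first note that at $\omega=0$ this factor equals $\bigl(1+\tfrac{\lambda+\mu}{\mu}\bigr)^{-1}=\mu/(\lambda+2\mu)$, which, after combining with the $-1/(\mu|\bm{q}+\bm{\alpha}|^{2})$ prefactor and the $\omega^{0}$ piece of the inner bracket, yields precisely the stated $G^{\bm{\alpha},0,0}_{ij}$. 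For the $\omega^{2k}$ contributions I would expand the inverse as a Neumann/geometric series, writing it as $\sum_{n=0}^{\infty}(-1)^{n}T_{n}\,\omega^{2n}$ where $T_{n}$ is obtained by multinomial expansion of the $n$-th power of the shifted quantity; iterating the single-factor expansion above produces the nested sums that define $T_{n}$ in the lemma. The $n$-th term of this expansion is exactly what appears in the statement, with each successive power of $\mu(\lambda+\mu)/(\lambda+2\mu)$ reflecting one more application of the geometric identity.

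Finally I would collect coefficients of $\omega^{2k}$: the contribution to $G^{\bm{\alpha},\Theta,0}_{k,ij}$ at fixed $\bm{q}$ is the Cauchy product of the inner-bracket series (index $m$) with the outer $T_{n}$-series (index $n$) restricted to $m+n=k$, which reproduces the displayed formulas verbatim once one separates diagonal ($i=j$) and off-diagonal ($i\neq j$) cases. To make the rearrangement rigorous I would justify the interchange of the summations over $\bm{q}\in\Lambda^{*}$ and over $m,n$ by the uniform bound $|\bm{q}+\bm{\alpha}|\geq c_{0}(\bm{\alpha})$ together with the standard decay of the Fourier coefficients of the layer kernels, so that everything is absolutely convergent in the appropriate operator topology on $H^{-1/2}(\partial D)^{d}\to H^{1/2}(\partial D)^{d}$. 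The only delicate point — and the step I expect to be the main obstacle — is the combinatorial bookkeeping that identifies the explicit $T_{n}$ with the $n$-th term of the Neumann series for the inverse factor; the subtlety is that each factor in the geometric expansion itself contains the $\omega$-dependent resolvents $(\omega^{2}\theta_{s}-\mu|\bm{q}+\bm{\alpha}|^{2})^{-1}$, so one has to be careful to extract only the $\omega^{0}$ pieces into $T_{n}$ and to absorb the remaining $\omega^{2}$'s into the index $m$ of the outer Cauchy product, which explains the telescoping structure with powers $\bigl(\mu(\lambda+\mu)/(\lambda+2\mu)\bigr)^{j}$ for $j=1,\dots,n$ in the definition of $T_{n}$.
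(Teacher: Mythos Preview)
Your proposal is correct and follows essentially the same route as the paper: expand each resolvent $(\omega^{2}\theta_{s}-\mu|\bm{q}+\bm{\alpha}|^{2})^{-1}$ as a geometric series in $\omega^{2}$, expand the inverse factor as a Neumann series after isolating the $\omega=0$ value $\mu/(\lambda+2\mu)$, and then take the Cauchy product. The one point where you make life harder than necessary is the ``main obstacle'' you flag: in the paper's ordering one first substitutes the resolvent expansions \emph{inside} the inverse factor, obtaining a clean power series $\tfrac{\lambda+2\mu}{\mu}+(\lambda+\mu)\sum_{k\geq 1}(\cdots)\omega^{2k}$, and only then applies the geometric series $(1+z)^{-1}=\sum(-z)^{n}$; with that order of operations the coefficient $T_{n}$ drops out by a straightforward multinomial expansion, and there is no need to separately track ``$\omega^{0}$ pieces'' versus residual $\omega^{2}$'s.
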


\begin{proof}
We provide the proof for the diagonal case ($i = j$); the off-diagonal case ($i \neq j$) follows analogously. 

As $\omega \to 0$, we have the expansion:
\begin{align}\label{f33}
\frac{1}{\omega^2\theta_i - \mu|\bm{q} + \bm{\alpha}|^2} 
= -\frac{1}{\mu|\bm{q} + \bm{\alpha}|^2} \sum_{k=0}^\infty \left( \frac{\theta_i}{\mu|\bm{q} + \bm{\alpha}|^2} \right)^k \omega^{2k}.
\end{align}
Consequently, 
\begin{align}\label{f34}
\frac{(\bm{q} + \bm{\alpha})_s^2}{(\omega^2 \theta_{i} - \mu|\bm{q} + \bm{\alpha}|^2)(\omega^2 \theta_{s} - \mu|\bm{q} + \bm{\alpha}|^2)} 
= \frac{(\bm{q} + \bm{\alpha})_s^2}{(\mu|\bm{q} + \bm{\alpha}|^2)^2} \sum_{k=0}^\infty \left( \frac{ \sum_{\substack{k_1,k_2 \geq 0 \\ k_1 + k_2 = k}} \theta_i^{k_1} \theta_s^{k_2} }{(\mu|\bm{q} + \bm{\alpha}|^2)^k} \right) \omega^{2k},
\end{align}
and
\begin{equation}\label{f35}
\begin{aligned}
& \left( 1 - (\lambda + \mu) \sum_{s=1}^d \frac{(\bm{q} + \bm{\alpha})_s^2}{\omega^2 \theta_{s} - \mu|\bm{q} + \bm{\alpha}|^2} \right)^{-1} \\
&= \left( 1 + (\lambda + \mu) \sum_{k=0}^\infty \frac{ \sum_{s=1}^d (\bm{q} + \bm{\alpha})_s^2 \theta_s^k }{(\mu|\bm{q} + \bm{\alpha}|^2)^{k+1}} \omega^{2k} \right)^{-1} \\
&= \left( \frac{\lambda + 2\mu}{\mu} + (\lambda + \mu) \sum_{k=1}^\infty \frac{ \sum_{s=1}^d (\bm{q} + \bm{\alpha})_s^2 \theta_s^k }{(\mu|\bm{q} + \bm{\alpha}|^2)^{k+1}} \omega^{2k} \right)^{-1} \\
&= \frac{\mu}{\lambda + 2\mu} \left( 1 + \frac{\mu(\lambda + \mu)}{\lambda + 2\mu} \sum_{k=1}^\infty \frac{ \sum_{s=1}^d (\bm{q} + \bm{\alpha})_s^2 \theta_s^k }{(\mu|\bm{q} + \bm{\alpha}|^2)^{k+1}} \omega^{2k} \right)^{-1} \\
&= \frac{\mu}{\lambda + 2\mu} \sum_{k=0}^\infty (-1)^k T_k \omega^{2k}.
\end{aligned}
\end{equation}
Here $T_0 = 1$, and for $k \geq 1$,
\begin{align*}
T_k ={}& \frac{\mu(\lambda + \mu)}{\lambda + 2\mu} \frac{ \sum_{s=1}^d (\bm{q} + \bm{\alpha})_s^2 \theta_s^k }{(\mu|\bm{q} + \bm{\alpha}|^2)^{k+1}} \\
&+ \left( \frac{\mu(\lambda + \mu)}{\lambda + 2\mu} \right)^2 \sum_{\substack{k_1,k_2 \geq 0 \\ k_1 + k_2 = k}} \frac{ \prod_{j=1}^2 \left( \sum_{s=1}^d (\bm{q} + \bm{\alpha})_s^2 \theta_s^{k_j} \right) }{(\mu|\bm{q} + \bm{\alpha}|^2)^{k+1}} \\
&+ \cdots \\
&+ \left( \frac{\mu(\lambda + \mu)}{\lambda + 2\mu} \right)^k \frac{ \left( \sum_{s=1}^d (\bm{q} + \bm{\alpha})_s^2 \theta_s \right)^k }{(\mu|\bm{q} + \bm{\alpha}|^2)^{k+1}}.
\end{align*}

Combining \eqref{f33}, \eqref{f34}, and \eqref{f35}, we establish the diagonal case of the lemma.
\end{proof}

According to Lemma~\ref{lem6}, for $\bm{\alpha }\neq 0$, and $\omega \to 0$, the operators 
$\mathcal{S}^{\bm{\alpha},\Theta,\omega}_{D}$ and $(\mathcal{K}^{-\bm{\alpha},\Theta,\omega}_{D})^*$ admit the asymptotic expansions:
\begin{equation}\label{f9}
\begin{aligned}
\mathcal{S}^{\bm{\alpha},\Theta,\omega}_{D}
&= \mathcal{S}^{\bm{\alpha},0,0}_{D}
  + \sum_{k=1}^{\infty} \omega^{2k} \mathcal{S}^{\bm{\alpha},\Theta,0}_{D,k}, \\
(\mathcal{K}^{-\bm{\alpha},\Theta,\omega}_{D})^*
&= (\mathcal{K}^{-\bm{\alpha},0,0}_{D})^*
  + \sum_{k=1}^{\infty} \omega^{2k} \mathcal{K}^{\bm{\alpha},\Theta,0}_{D,k}.
\end{aligned}
\end{equation}
Here the operators are defined by
\begin{align*}
\mathcal{S}^{\bm{\alpha},\Theta,0}_{D,k}[\bm{\varphi}](\bm{x})
&= \int_{\partial D} \bm{G}^{\bm{\alpha},\Theta,0}_{k}(\bm{x},\bm{y}) \bm{\varphi}(\bm{y})  \mathrm{d}\sigma(\bm{y}), \\
\mathcal{K}^{\bm{\alpha},\Theta,0}_{D,k}[\bm{\varphi}](\bm{x})
&= \mathrm{p.v.} \int_{\partial D} \frac{\partial \bm{G}^{\bm{\alpha},\Theta,0}_{k}(\bm{x},\bm{y})}{\partial \bm{\nu}(\bm{x})} \bm{\varphi}(\bm{y})  \mathrm{d}\sigma(\bm{y}).
\end{align*}
The series in \eqref{f9} are convergent. Moreover, for each $k \geq 1$, 
$\mathcal{S}^{\bm{\alpha},\Theta,0}_{D,k}$ and $\mathcal{K}^{\bm{\alpha},\Theta,0}_{D,k}$ 
are bounded linear operators satisfying
\begin{align*}
&\sup_{k \geq 1} \left\| \mathcal{S}^{\bm{\alpha},\Theta,0}_{D,k} \right\|_{\mathcal{B}(H^{-1/2}(\partial D)^d, H^{1/2}(\partial D)^d)} < \infty, \\
&\sup_{k \geq 1} \left\| \mathcal{K}^{\bm{\alpha},\Theta,0}_{D,k} \right\|_{\mathcal{B}(H^{-1/2}(\partial D)^d, H^{-1/2}(\partial D)^d)} < \infty,
\end{align*}
where $\mathcal{B}(S_1, S_2)$ denotes the Banach space of bounded linear operators from $S_1$ to $S_2$ equipped with the operator norm.

We then obtain the asymptotic expansions:
\begin{align}
	\mathcal{S}^{\bm{\alpha},\Theta,\omega}_{D} 
	&= \mathcal{S}^{\bm{\alpha},0,0}_{D} + \omega^{2} \mathcal{S}^{\bm{\alpha},\Theta,0}_{D,1} + \mathcal{O}(\omega^{4}), \label{f9-} \\
	(\mathcal{K}^{-\bm{\alpha},\Theta,\omega}_{D})^*
	&= (\mathcal{K}^{-\bm{\alpha},0,0}_{D})^* + \omega^{2} \mathcal{K}^{\bm{\alpha},\Theta,0}_{D,1} + \mathcal{O}(\omega^{4}). \label{f10-}
\end{align}

The following lemma naturally extends results from \cite{ren2025, Lipton2022}.
\begin{lemm}\label{lem2}
The operator $\mathcal{S}^{\bm{\alpha},0,0}_{D} : H^{-1/2}(\partial D)^d \to H^{1/2}(\partial D)^d$ 
is invertible for each nonzero $\bm{\alpha}$.
\end{lemm}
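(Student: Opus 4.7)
The plan is to establish that $\mathcal{S}^{\bm{\alpha},0,0}_{D}$ is Fredholm of index zero and then show that its kernel is trivial; invertibility then follows automatically.

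For Fredholmness, I would split the quasi-periodic kernel $\bm{G}^{\bm{\alpha},0,0}(\bm{x},\bm{y})$ (available through Lemma~\ref{lem6}) into the free-space Lam\'e fundamental solution plus a smooth correction on the diagonal; the smoothness of the correction is a standard consequence of $\bm{\alpha}\neq 0$, which regularizes the zero-mode contribution to the lattice sum. The correction induces a compact operator $H^{-1/2}(\partial D)^d \to H^{1/2}(\partial D)^d$, so $\mathcal{S}^{\bm{\alpha},0,0}_{D}$ is a compact perturbation of the classical Lam\'e single-layer operator $\mathcal{S}^{L}_{D}$. Since the latter is Fredholm of index zero on this pair of Sobolev spaces (by the layer-potential framework developed in \cite{ren2025, Lipton2022}), the same property transfers to $\mathcal{S}^{\bm{\alpha},0,0}_{D}$.

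For injectivity, suppose $\mathcal{S}^{\bm{\alpha},0,0}_{D}[\bm{\varphi}]|_{\partial D} = \bm{0}$ and set $\bm{u}(\bm{x}) := \mathcal{S}^{\bm{\alpha},0,0}_{D}[\bm{\varphi}](\bm{x})$ on $\mathbb{R}^d$. Then $\bm{u}$ is $\bm{\alpha}$-quasi-periodic, continuous across $\partial D$, solves $\mathfrak{L}^{\lambda,\mu}\bm{u}=\bm{0}$ in $D$ and in $Y\setminus\overline{D}$, and vanishes on $\partial D$. The interior problem on $D$ is a homogeneous Dirichlet problem for the Lam\'e system, which forces $\bm{u}\equiv\bm{0}$ in $D$. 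For the exterior region $Y\setminus\overline{D}$, multiplying $\mathfrak{L}^{\lambda,\mu}\bm{u}=\bm{0}$ by $\overline{\bm{u}}$ and integrating by parts yields
\begin{align*}
\int_{Y\setminus\overline{D}}\Bigl(\lambda\,|\nabla\cdot\bm{u}|^2 + 2\mu\,|e(\bm{u})|^2\Bigr)\,\mathrm{d}\bm{x} = 0,
\end{align*}
where $e(\bm{u})=\tfrac{1}{2}(\nabla\bm{u}+\nabla\bm{u}^\top)$, the boundary contribution on $\partial D$ vanishes because $\bm{u}|_{\partial D}=\bm{0}$, and the contributions from opposite faces of $\partial Y$ cancel: the quadratic form $\bm{u}\cdot\overline{\partial\bm{u}/\partial\bm{\nu}}$ is $\Lambda$-periodic (the quasi-periodic phases $e^{\mathrm{i}\bm{\alpha}\cdot\bm{\ell}}$ of $\bm{u}$ and $e^{-\mathrm{i}\bm{\alpha}\cdot\bm{\ell}}$ of its conjugate cancel) while the outward normals on paired faces are reversed. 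The positivity assumptions $\mu>0$ and $d\lambda+2\mu>0$ then force $e(\bm{u})\equiv 0$ and $\nabla\cdot\bm{u}\equiv 0$, so $\bm{u}$ is a rigid motion on $Y\setminus\overline{D}$. Choosing $\bm{\ell}\in\Lambda$ with $e^{\mathrm{i}\bm{\alpha}\cdot\bm{\ell}}\neq 1$ (possible since $\bm{\alpha}\neq 0$ in $Y^*$) and imposing quasi-periodicity on the ansatz $\bm{u}(\bm{x})=\bm{a}+R\bm{x}$ with skew $R$ forces $R=\bm{0}$ and $\bm{a}=\bm{0}$. Thus $\bm{u}\equiv\bm{0}$ on $\mathbb{R}^d$, and the jump relation \eqref{f11} at $\omega=0$ gives $\bm{\varphi} = \partial\bm{u}/\partial\bm{\nu}|_+ - \partial\bm{u}/\partial\bm{\nu}|_- = \bm{0}$.

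I expect the main obstacle to be the Fredholm-of-index-zero statement in dimension $d=2$, where the free-space Lam\'e single-layer operator is not invertible on its own because of the logarithmic behavior of its kernel. I would circumvent this either by invoking the Banach-space framework of \cite{Lipton2022}, which already accommodates the two-dimensional case, or by establishing Fredholmness intrinsically from the elliptic principal symbol of $\mathcal{S}^{\bm{\alpha},0,0}_{D}$, which coincides with that of the free-space operator independent of dimension. A secondary delicate point is confirming that $\partial\bm{u}/\partial\bm{\nu}$ inherits $\bm{\alpha}$-quasi-periodicity from $\bm{u}$, so that the boundary cancellation on $\partial Y$ in the energy identity is fully rigorous.
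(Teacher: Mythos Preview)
The paper does not actually prove this lemma; it simply states that it ``naturally extends results from \cite{ren2025, Lipton2022}'' and moves on. Your proposal goes further by sketching the full argument, and the outline is correct and standard: Fredholmness of index zero via the compact-perturbation structure of the quasi-periodic Green's function (the smooth remainder relying on $\bm{\alpha}\neq 0$), and injectivity via the elastic energy identity. The boundary cancellation on $\partial Y$ that you need is in fact recorded in the paper itself, in the proof of Lemma~\ref{lem7}, so that step is on firm ground.

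One minor simplification: to kill the rigid motion in $Y\setminus\overline{D}$ you do not need the quasi-periodicity argument. Since the single-layer potential is continuous across $\partial D$, the exterior trace of $\bm{u}$ also vanishes on $\partial D$; a rigid motion $\bm{a}+R\bm{x}$ that vanishes on a $(d-1)$-dimensional $C^2$ surface must be identically zero. This sidesteps the small technicality of extending the rigid-motion formula beyond the single cell before invoking the Floquet condition. Your concern about the two-dimensional Fredholm property is legitimate, and handling it via the elliptic principal symbol (which is the same as in free space) or by appeal to \cite{Lipton2022} is the right move.
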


Let \(\bm{e}_s\) (\(s = 1, 2, \dots, d\)) denote the standard unit basis vectors in \(\mathbb{R}^d\), and let \(\chi_{\partial D_i}\) be the characteristic function of the boundary \(\partial D_i\) for \(i = 1, 2, \dots, N\). 
Leveraging the invertibility of \(\mathcal{S}^{\bm{\alpha},0,0}_{D}\), we define the \(d\)-dimensional vector 
\begin{align}\label{f18}
\bm{C}^{\bm{\alpha},s}_{ij} 
:= \int_{\partial D_i} (\mathcal{S}^{\bm{\alpha},0,0}_D)^{-1} [ \bm{e}_{s} \chi_{\partial D_{j}} ] (\bm{x}) \mathrm{d}\sigma(\bm{x}),
\end{align}
for \(s = 1, 2, \dots, d\) and \(i, j = 1, 2, \dots, N\).

The following lemma describes the symmetry properties of \(\bm{C}^{\bm{\alpha},s}_{ij}\).
\begin{lemm}\label{lem7}
Assume that $\bm{\alpha } \neq 0$.
Let \(\bm{C}^{\bm{\alpha},s}_{ij} = (C^{\bm{\alpha},s,1}_{ij}, C^{\bm{\alpha},s,2}_{ij}, \dots, C^{\bm{\alpha},s,d}_{ij})^\top\) be defined as in \eqref{f18}. Then
\begin{align*}
C^{\bm{\alpha},s,s'}_{ij} = \overline{C^{\bm{\alpha},s',s}_{ji}}.
\end{align*}
\end{lemm}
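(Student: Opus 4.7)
The plan is to reduce the identity $C^{\bm{\alpha},s,s'}_{ij}=\overline{C^{\bm{\alpha},s',s}_{ji}}$ to a self-adjointness statement for the static single-layer potential $\mathcal{S}^{\bm{\alpha},0,0}_{D}$ with respect to the standard $L^{2}(\partial D)^{d}$ sesquilinear pairing $\langle \bm{u},\bm{v}\rangle=\int_{\partial D}\overline{\bm{u}}\cdot\bm{v}\,{\rm d}\sigma$, and then apply this symmetry to the real-valued vectorial test data $\bm{e}_{s}\chi_{\partial D_{j}}$ and $\bm{e}_{s'}\chi_{\partial D_{i}}$. The first step is to show the kernel symmetry
\begin{align*}
\overline{G^{\bm{\alpha},0,0}_{ji}(\bm{y},\bm{x})}=G^{\bm{\alpha},0,0}_{ij}(\bm{x},\bm{y}),
\end{align*}
which I would read off directly from the explicit dual-lattice representations in Lemma~\ref{lem6}: every prefactor is real, the prefactor matrix in $(i,j)$ is visibly symmetric, and complex conjugation merely flips the phase $e^{{\rm i}(\bm{q}+\bm{\alpha})\cdot(\bm{x}-\bm{y})}$ to $e^{{\rm i}(\bm{q}+\bm{\alpha})\cdot(\bm{y}-\bm{x})}$, which is compensated by swapping $\bm{x}\leftrightarrow\bm{y}$. (A conceptual alternative is to note that the static Lam\'e operator $\mathfrak{L}^{\lambda,\mu}$ is formally self-adjoint in the stress-tensor sense, so its quasi-periodic Green's tensor must carry exactly this conjugate-transpose symmetry.)

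The second step promotes kernel symmetry to operator symmetry. Using Fubini, one obtains
\begin{align*}
\langle \bm{\psi},\mathcal{S}^{\bm{\alpha},0,0}_{D}\bm{\varphi}\rangle=\langle \mathcal{S}^{\bm{\alpha},0,0}_{D}\bm{\psi},\bm{\varphi}\rangle
\end{align*}
first for smooth densities $\bm{\varphi},\bm{\psi}$, and then by density extends to the $\langle H^{-1/2}(\partial D)^{d},H^{1/2}(\partial D)^{d}\rangle$ duality pairing. Combining this with the invertibility established in Lemma~\ref{lem2} shows that $(\mathcal{S}^{\bm{\alpha},0,0}_{D})^{-1}:H^{1/2}(\partial D)^{d}\to H^{-1/2}(\partial D)^{d}$ is likewise self-adjoint with respect to the same pairing, via the standard identity $\langle \bm{\psi},\mathcal{S}^{-1}\bm{\varphi}\rangle=\langle \mathcal{S}^{-1}\bm{\psi},\bm{\varphi}\rangle$ obtained by substituting $\bm{\varphi}\mapsto\mathcal{S}\bm{\varphi}$, $\bm{\psi}\mapsto\mathcal{S}\bm{\psi}$.

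The third step is the bookkeeping: since $\bm{e}_{s'}\chi_{\partial D_{i}}$ is real-valued, the definition \eqref{f18} rewrites as
\begin{align*}
C^{\bm{\alpha},s,s'}_{ij}=\int_{\partial D_{i}}\bm{e}_{s'}\cdot (\mathcal{S}^{\bm{\alpha},0,0}_{D})^{-1}[\bm{e}_{s}\chi_{\partial D_{j}}]\,{\rm d}\sigma =\big\langle \bm{e}_{s'}\chi_{\partial D_{i}},(\mathcal{S}^{\bm{\alpha},0,0}_{D})^{-1}[\bm{e}_{s}\chi_{\partial D_{j}}]\big\rangle.
\end{align*}
Applying the self-adjointness of $(\mathcal{S}^{\bm{\alpha},0,0}_{D})^{-1}$ swaps the two arguments, and invoking the conjugate symmetry $\overline{\langle \bm{u},\bm{v}\rangle}=\langle \bm{v},\bm{u}\rangle$ of the sesquilinear pairing yields $C^{\bm{\alpha},s,s'}_{ij}=\overline{\langle \bm{e}_{s}\chi_{\partial D_{j}},(\mathcal{S}^{\bm{\alpha},0,0}_{D})^{-1}[\bm{e}_{s'}\chi_{\partial D_{i}}]\rangle}=\overline{C^{\bm{\alpha},s',s}_{ji}}$. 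The only real obstacle is the careful verification of Step~1 from the series in Lemma~\ref{lem6} (keeping track of the symmetric vs.\ conjugate roles of the indices and the phase); everything afterward is a formal consequence of self-adjointness and the fact that the test data are real-valued.
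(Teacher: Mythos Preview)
Your proof is correct, but it follows a genuinely different route from the paper's. You argue at the level of the integral kernel: the explicit dual-lattice formulas in Lemma~\ref{lem6} make the conjugate-transpose symmetry $\overline{G^{\bm{\alpha},0,0}_{ji}(\bm{y},\bm{x})}=G^{\bm{\alpha},0,0}_{ij}(\bm{x},\bm{y})$ transparent, and from there the self-adjointness of $\mathcal{S}^{\bm{\alpha},0,0}_{D}$ (hence of its inverse) is a one-line Fubini computation, so the result drops out by pairing against the real test data $\bm{e}_{s}\chi_{\partial D_{j}}$. The paper instead takes a variational/PDE route: it first uses the identity \eqref{f22} and the jump relation to rewrite $C^{\bm{\alpha},s,s'}_{ij}$ as the exterior conormal derivative $\int_{\partial D_{i}}\partial_{\bm{\nu}}\bm{u}^{\bm{\alpha}}_{js}\cdot\bm{e}_{s'}\,{\rm d}\sigma$, where $\bm{u}^{\bm{\alpha}}_{js}$ solves the quasiperiodic Dirichlet problem in $Y\setminus\overline{D}$ with boundary data $\bm{e}_{s}\chi_{\partial D_{j}}$, and then integrates by parts in $Y\setminus\overline{D}$ (using that the $\partial Y$ contribution vanishes by quasiperiodicity) to land on the Hermitian elastic sesquilinear form $E_{Y\setminus\overline{D}}(\bm{u}^{\bm{\alpha}}_{js},\overline{\bm{u}^{\bm{\alpha}}_{is'}})$. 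Your approach is shorter and exploits the explicit series representation already at hand; the paper's approach is the classical capacitance-matrix argument and would survive even without a closed-form Green's tensor, relying only on the self-adjointness of the Lam\'e operator and the quasiperiodic boundary conditions.
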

The proof, which requires a key integral formula, is postponed to Section~\ref{sec3}.

\section{Non-modulated case}\label{sec3}
We first consider the non-modulated case, where the Lam\'e constants and mass densities satisfy \eqref{f46} and \eqref{f7-}. 
Applying the elastic wave equation \eqref{f5} to time-harmonic solutions of the form $\bm{U}(\bm{x},t) = e^{\mathrm{i}\omega t} \bm{v}(\bm{x})$ yields:
\begin{align*}
\begin{cases}
(\mathfrak{L}^{\lambda,\mu} + \bm{\rho} \omega^2) \bm{v}(\bm{x}) = \bm{0}, & \bm{x} \in \mathbb{R}^{d} \setminus \overline{\mathcal{C}}, \\[3pt]
(\mathfrak{L}^{\widetilde{\lambda},\widetilde{\mu}} + \widetilde{\bm{\rho}} \omega^2) \bm{v}(\bm{x}) = \bm{0}, & \bm{x} \in \mathcal{C}, \\[3pt]
\bm{v}|_{+} = \bm{v}|_{-}, & \bm{x} \in \partial \mathcal{C}, \\[3pt]
\left. \dfrac{\partial \bm{v}}{\partial \bm{\nu}} \right|_{+} \hspace{-1pt}= \left. \dfrac{\partial \bm{v}}{\partial \bm{\nu}} \right|_{-}, & \bm{x} \in \partial \mathcal{C}, \\[3pt]
\bm{v}(\bm{x}) e^{-\mathrm{i} \bm{\alpha} \cdot \bm{x}} \text{ is periodic is periodic in the whole space}.
\end{cases}
\end{align*}
Using \eqref{f7-}, we obtain the equivalent transmission problem:
\begin{align}\label{f8}
\begin{cases}
(\mathfrak{L}^{\lambda,\mu} + \bm{\rho} \omega^2) \bm{v}(\bm{x}) = \bm{0}, & \bm{x} \in Y \setminus \overline{D}, \\[3pt]
\left(\mathfrak{L}^{\lambda,\mu} + \dfrac{\delta}{\varepsilon} \bm{\rho} \omega^2\right) \bm{v}(\bm{x}) = \bm{0}, & \bm{x} \in D, \\[3pt]
\bm{v}|_{+} = \bm{v}|_{-}, & \bm{x} \in \partial D, \\[3pt]
\delta \left. \dfrac{\partial \bm{v}}{\partial \bm{\nu}} \right|_{+} \hspace{-1pt}= \left. \dfrac{\partial \bm{v}}{\partial \bm{\nu}} \right|_{-}, & \bm{x} \in \partial D, \\[3pt]
\bm{v}(\bm{x}) e^{-\mathrm{i} \bm{\alpha} \cdot \bm{x}} \text{ is periodic is periodic in the whole space}.
\end{cases}
\end{align}

This work establishes an asymptotic expansion for the transmission eigenvalue problem.
\begin{theo}\label{th4}
Assume that $\bm{\alpha } \neq 0$. The frequencies $\omega_i$ satisfying equation \eqref{f8} have the asymptotic approximation 
\begin{align*}
	\omega_i = \sqrt{ -\varepsilon \, \xi_i } + \mathcal{O}(\varepsilon) \quad \text{as} \quad \varepsilon \to 0,
\end{align*}
where $\xi_i$ ($i = 1, \dots, dN$) denote the eigenvalues of the $dN \times dN$ block matrix $\mathbf{H}$, defined by
\begin{align*}
\mathbf{H} =
\begin{pmatrix}
\mathbf{H}_{11} & \cdots & \mathbf{H}_{1N} \\
\vdots & \ddots & \vdots \\
\mathbf{H}_{N1} & \cdots & \mathbf{H}_{NN}
\end{pmatrix}.
\end{align*}
Each $d \times d$ block $\mathbf{H}_{ij}$ is given by
\begin{align*}
\mathbf{H}_{ij} = \frac{1}{|D_i|} \bm{\rho}^{-1} \begin{pmatrix} \mathbf{C}^{\alpha,1}_{ij} & \mathbf{C}^{\alpha,2}_{ij} & \cdots & \mathbf{C}^{\alpha,d}_{ij} \end{pmatrix},
\end{align*}
where $|D_i|$ represents the volume of domain $D_i$.
\end{theo}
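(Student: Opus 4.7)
The plan is to derive a finite-dimensional reduction of \eqref{f8} by combining a single-layer potential ansatz for the exterior field with a rigid-body approximation for the interior field, and then to identify the resulting eigenvalue problem with $\mathbf{H}\bm{a}=-(\omega^{2}/\varepsilon)\bm{a}$. The starting point is the observation that, under the scaling \eqref{relation} with $\omega=\mathcal{O}(\varepsilon^{1/2})$, the interior equation $\mathfrak{L}^{\lambda,\mu}\bm{v}=-(\delta/\varepsilon)\omega^{2}\bm{\rho}\bm{v}$ has right-hand side of order $\mathcal{O}(\varepsilon)$, while the transmission condition $\delta\,\partial\bm{v}/\partial\bm{\nu}|_{+}=\partial\bm{v}/\partial\bm{\nu}|_{-}$ forces the interior conormal derivative to vanish to leading order. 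Combined with the Dirichlet continuity across $\partial D$, elliptic regularity for the Lam\'e system yields the rigid-body expansion $\bm{v}(\bm{x})=\bm{a}_{i}+\mathcal{O}(\omega^{2})$ on each $D_{i}$ for some constant vectors $\bm{a}_{i}\in\mathbb{C}^{d}$.

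For the exterior, I would represent $\bm{v}|_{Y\setminus\overline{D}}=\mathcal{S}^{\bm{\alpha},\bm{\rho},\omega}_{D}[\bm{\varphi}]$, apply the low-frequency expansion \eqref{f9-}, and use the invertibility of the leading operator $\mathcal{S}^{\bm{\alpha},0,0}_{D}$ (Lemma~\ref{lem2}) to solve the matching condition $\mathcal{S}^{\bm{\alpha},0,0}_{D}[\bm{\varphi}]=\sum_{j,s}a_{j}^{s}\bm{e}_{s}\chi_{\partial D_{j}}$ on $\partial D$. This yields
\begin{align*}
\bm{\varphi}=\sum_{j=1}^{N}\sum_{s=1}^{d}a_{j}^{s}\,(\mathcal{S}^{\bm{\alpha},0,0}_{D})^{-1}[\bm{e}_{s}\chi_{\partial D_{j}}]+\mathcal{O}(\omega^{2}).
\end{align*}

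Next, I would integrate the interior equation over $D_{i}$, apply the divergence theorem, and use the transmission condition to replace $\int_{\partial D_{i}}\partial\bm{v}/\partial\bm{\nu}|_{-}$ by $\delta\int_{\partial D_{i}}\partial\bm{v}/\partial\bm{\nu}|_{+}$. The key identity is that $\mathcal{S}^{\bm{\alpha},0,0}_{D}[\bm{\varphi}]$ is Lam\'e-harmonic inside $D_{i}$, so the divergence theorem applied in $D_{i}$ gives $\int_{\partial D_{i}}\partial/\partial\bm{\nu}\,\mathcal{S}^{\bm{\alpha},0,0}_{D}[\bm{\varphi}]|_{-}\,\mathrm{d}\sigma=0$; together with the jump relation \eqref{f11}, this reduces the exterior flux to
\begin{align*}
\int_{\partial D_{i}}\left.\frac{\partial\bm{v}}{\partial\bm{\nu}}\right|_{+}\mathrm{d}\sigma=\int_{\partial D_{i}}\bm{\varphi}\,\mathrm{d}\sigma+\mathcal{O}(\omega^{2})=\sum_{j=1}^{N}\sum_{s=1}^{d}a_{j}^{s}\bm{C}^{\bm{\alpha},s}_{ij}+\mathcal{O}(\omega^{2}).
\end{align*}
Substituting back gives $\omega^{2}|D_{i}|\bm{\rho}\bm{a}_{i}=-\varepsilon\sum_{j,s}a_{j}^{s}\bm{C}^{\bm{\alpha},s}_{ij}+\mathcal{O}(\varepsilon\omega^{2})$, which after multiplication by $\bm{\rho}^{-1}/|D_{i}|$ is exactly $\mathbf{H}\bm{a}=-(\omega^{2}/\varepsilon)\bm{a}+\mathcal{O}(\omega^{2})$ with $\bm{a}=(\bm{a}_{1},\dots,\bm{a}_{N})^{\top}$. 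Standard matrix perturbation then yields $\omega_{i}^{2}=-\varepsilon\xi_{i}+\mathcal{O}(\varepsilon^{2})$, hence $\omega_{i}=\sqrt{-\varepsilon\xi_{i}}+\mathcal{O}(\varepsilon)$.

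The main obstacle is to make the rigid-body reduction rigorous and to control its error uniformly as $\varepsilon,\delta\to 0$ with $\delta/\varepsilon\leq\mathcal{O}(1)$. One must verify that the $\omega^{2}$-corrections appearing in \eqref{f9-}--\eqref{f10-}, the deviation of $\bm{v}|_{D_{i}}$ from its constant part, and the cross-coupling through the Dirichlet data each contribute only at order $\omega^{2}=\mathcal{O}(\varepsilon)$, so that the leading system is a genuine $\mathcal{O}(\omega^{2})$ perturbation of the finite-dimensional matrix equation. The invertibility granted by Lemma~\ref{lem2}, together with the non-resonance hypothesis on $\omega\sqrt{\theta_{i}}/\sqrt{\mu}$, is essential here: it ensures a genuine spectral gap isolating the $dN$ subwavelength branches from the rest of the spectrum and prevents the square-root map $\xi\mapsto\sqrt{-\varepsilon\xi}$ from amplifying the error beyond $\mathcal{O}(\varepsilon)$.
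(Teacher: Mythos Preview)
Your argument is correct and leads to the same finite-dimensional eigenvalue problem as the paper, but the route differs in one structural choice. The paper represents the solution by single-layer potentials on \emph{both} sides of $\partial D$, writing $\bm{v}=\mathcal{S}^{\bm{\alpha},\bm{\rho},\omega}_{D}[\bm{\psi}]$ in $Y\setminus\overline{D}$ and $\bm{v}=\mathcal{S}^{\bm{\alpha},\bm{\rho},(\delta/\varepsilon)\omega}_{D}[\bm{\phi}]$ in $D$. The transmission conditions are then rewritten as an operator equation for $(\bm{\phi},\bm{\psi})$; the low-frequency expansion \eqref{f10-} brings in the Neumann--Poincar\'e correction $\mathcal{K}^{\bm{\alpha},\bm{\rho},0}_{D,1}$, and the crucial step is Lemma~\ref{lem4}, which converts $\int_{\partial D_{j}}\mathcal{K}^{\bm{\alpha},\bm{\rho},0}_{D,1}[\bm{\phi}]$ into $-\bm{\rho}\int_{D_{j}}\mathcal{S}^{\bm{\alpha},0,0}_{D}[\bm{\phi}]$. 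The kernel characterization of Lemma~\ref{lem3} then forces $\bm{\phi}$ into the span of $(\mathcal{S}^{\bm{\alpha},0,0}_{D})^{-1}[\bm{e}_{s}\chi_{\partial D_{i}}]$ up to $\mathcal{O}(\delta+(\delta/\varepsilon)\omega^{2})$, and integrating over $\partial D_{j}$ gives the matrix system.

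You bypass Lemmas~\ref{lem4} and~\ref{lem3} entirely: instead of an interior layer potential, you argue directly (via an energy/Korn-type estimate) that $\bm{v}|_{D_{i}}$ is a constant $\bm{a}_{i}$ to leading order, then integrate the interior PDE over $D_{i}$ and pass the flux to the exterior through the transmission condition. This is in fact the strategy the paper adopts later for the time-modulated Theorem~\ref{th5} (compare your integration step with \eqref{f24}--\eqref{f25} and your rigid-body claim with Lemma~\ref{lem5}). Your version is more economical and makes the physical mechanism transparent; the paper's version is more algebraic and packages the interior information into the operator identity of Lemma~\ref{lem4}. Both produce identical error orders, since your $\mathcal{O}(\omega^{2})$ remainder from the exterior expansion and the rigid-body deviation match the paper's $\mathcal{O}(\delta+\omega^{2}+(\delta/\varepsilon)\omega^{2}+(\delta/\varepsilon^{2})\omega^{4})$ once the scaling $\omega^{2}\sim\varepsilon$ and $\delta/\varepsilon\le\mathcal{O}(1)$ are invoked. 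One small point: what you call ``elliptic regularity'' for the interior constancy is, in the paper's treatment, an energy identity plus Korn and Poincar\'e--Wirtinger (see the proof of Lemma~\ref{lem5}); phrasing it that way would make the uniformity in $\varepsilon,\delta$ explicit.
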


To prove the theorem, we first establish the following auxiliary lemma.
\begin{lemm}\label{lem4}
Assume that $\bm{\alpha } \neq 0$. For every $\bm{\phi} \in H^{-1/2}(\partial D)^d$,
\begin{align*}
\int_{\partial D_{j}} \mathcal{K}^{\bm{\alpha},\bm{\rho},0}_{D,1}[\bm{\phi}](\bm{x})  \mathrm{d}\sigma(\bm{x})
= -\bm{\rho} \int_{D_{j}} \mathcal{S}^{\bm{\alpha}, 0,0}_D[\bm{\phi}](\bm{x})  \mathrm{d}\bm{x}.
\end{align*}
\end{lemm}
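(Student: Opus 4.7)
\textbf{Proof plan for Lemma~\ref{lem4}.}
The core idea is to extract a pointwise PDE satisfied by the second coefficient $\bm{G}^{\bm{\alpha},\bm{\rho},0}_{1}$ in the low-frequency expansion, and then to apply the divergence theorem on $D_j$. Since $\bm{G}^{\bm{\alpha},\bm{\rho},\omega}$ solves
\[
(\mathfrak{L}^{\lambda,\mu}+\omega^{2}\bm{\rho})\bm{G}^{\bm{\alpha},\bm{\rho},\omega}(\bm{x},\bm{y})
=\sum_{\bm{\ell}\in\Lambda}\delta(\bm{x}-\bm{y}-\bm{\ell})\,e^{\mathrm{i}\bm{\ell}\cdot\bm{\alpha}}\bm{I}_{d},
\]
substituting the expansion $\bm{G}^{\bm{\alpha},\bm{\rho},\omega}=\bm{G}^{\bm{\alpha},0,0}+\omega^{2}\bm{G}^{\bm{\alpha},\bm{\rho},0}_{1}+\mathcal{O}(\omega^{4})$ from Lemma~\ref{lem6} and matching the $\omega^{2}$ coefficient yields
\[
\mathfrak{L}^{\lambda,\mu}_{\bm{x}}\bm{G}^{\bm{\alpha},\bm{\rho},0}_{1}(\bm{x},\bm{y})=-\bm{\rho}\,\bm{G}^{\bm{\alpha},0,0}(\bm{x},\bm{y}),
\]
which is the identity that drives the whole argument.

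Next, I would apply this identity inside the layer potential. For any $\bm{\phi}\in H^{-1/2}(\partial D)^{d}$ and $\bm{x}\in D_j$, differentiation under the integral sign is justified because the kernel $\bm{G}^{\bm{\alpha},\bm{\rho},0}_{1}$ is two orders more regular than $\bm{G}^{\bm{\alpha},0,0}$. One then obtains
\[
\mathfrak{L}^{\lambda,\mu}\mathcal{S}^{\bm{\alpha},\bm{\rho},0}_{D,1}[\bm{\phi}](\bm{x})=-\bm{\rho}\,\mathcal{S}^{\bm{\alpha},0,0}_{D}[\bm{\phi}](\bm{x}),\qquad \bm{x}\in D_j.
\]
Integrating over $D_j$ and using $\mathfrak{L}^{\lambda,\mu}\bm{u}=\nabla\!\cdot\sigma(\bm{u})$ with the conormal identification $\sigma(\bm{u})\bm{\nu}=\partial\bm{u}/\partial\bm{\nu}$, the divergence theorem gives
\[
\int_{\partial D_{j}}\frac{\partial\mathcal{S}^{\bm{\alpha},\bm{\rho},0}_{D,1}[\bm{\phi}]}{\partial\bm{\nu}}\bigg|_{-}(\bm{x})\,\mathrm{d}\sigma(\bm{x})
=-\bm{\rho}\int_{D_{j}}\mathcal{S}^{\bm{\alpha},0,0}_{D}[\bm{\phi}](\bm{x})\,\mathrm{d}\bm{x}.
\]

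To finish, I need to identify the left-hand boundary integrand with $\mathcal{K}^{\bm{\alpha},\bm{\rho},0}_{D,1}[\bm{\phi}]$. This is where the regularity of $\bm{G}^{\bm{\alpha},\bm{\rho},0}_{1}$ is essential: unlike the Neumann--Poincar\'e operator associated with $\bm{G}^{\bm{\alpha},0,0}$, no $\pm\tfrac12\mathcal{I}$ jump term appears in the trace of $\partial_{\bm{\nu}}\mathcal{S}^{\bm{\alpha},\bm{\rho},0}_{D,1}[\bm{\phi}]$. The kernel $\partial\bm{G}^{\bm{\alpha},\bm{\rho},0}_{1}/\partial\bm{\nu}(\bm{x})$ is weakly singular enough that both the interior and exterior normal traces coincide with the direct integral, so that
\[
\frac{\partial\mathcal{S}^{\bm{\alpha},\bm{\rho},0}_{D,1}[\bm{\phi}]}{\partial\bm{\nu}}\bigg|_{-}(\bm{x})=\mathcal{K}^{\bm{\alpha},\bm{\rho},0}_{D,1}[\bm{\phi}](\bm{x}),\qquad \bm{x}\in\partial D_j.
\]
Substituting this into the previous display yields the claimed identity.

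The main obstacle is precisely the last step: verifying rigorously that no surface jump term is produced when taking the conormal trace of $\mathcal{S}^{\bm{\alpha},\bm{\rho},0}_{D,1}[\bm{\phi}]$. This requires quantifying the singularity order of $\bm{G}^{\bm{\alpha},\bm{\rho},0}_{1}(\bm{x},\bm{y})$ at $\bm{x}=\bm{y}$, which through the PDE $\mathfrak{L}^{\lambda,\mu}\bm{G}^{\bm{\alpha},\bm{\rho},0}_{1}=-\bm{\rho}\bm{G}^{\bm{\alpha},0,0}$ and standard elliptic regularity improves the singularity by two orders relative to the Kelvin-type kernel, so that the gradient kernel is integrable across the boundary and the p.v.\ in the definition of $\mathcal{K}^{\bm{\alpha},\bm{\rho},0}_{D,1}$ reduces to an ordinary integral. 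A careful but routine invocation of the jump-relation machinery for smoother kernels, paired with the boundedness statements recorded after \eqref{f9}, closes this gap.
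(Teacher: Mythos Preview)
Your argument is correct and follows a genuinely different route from the paper's. You work directly at the kernel level: extract the PDE $\mathfrak{L}^{\lambda,\mu}\bm{G}^{\bm{\alpha},\bm{\rho},0}_{1}=-\bm{\rho}\,\bm{G}^{\bm{\alpha},0,0}$, integrate over $D_j$, and then argue that the smoother kernel $\bm{G}^{\bm{\alpha},\bm{\rho},0}_{1}$ produces no $\pm\tfrac12\mathcal{I}$ jump in the conormal trace. The paper instead works with the full $\omega$-dependent operators: it evaluates $\int_{\partial D_j}\bigl(-\tfrac12\mathcal{I}+(\mathcal{K}^{-\bm{\alpha},\bm{\rho},\omega}_D)^*\bigr)[\bm{\phi}]\,\mathrm{d}\sigma$ in two ways---once via the asymptotic expansion \eqref{f10-} together with the vanishing identity \eqref{f19}, and once via the established jump relation \eqref{f11} plus the divergence theorem and $(\mathfrak{L}^{\lambda,\mu}+\omega^2\bm{\rho})\mathcal{S}^{\bm{\alpha},\bm{\rho},\omega}_D[\bm{\phi}]=\bm{0}$---and then matches the $\omega^2$ coefficients. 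The advantage of the paper's approach is that it never needs to quantify the singularity of $\bm{G}^{\bm{\alpha},\bm{\rho},0}_{1}$ or justify the absence of a jump for the higher-order layer potential: since the $-\tfrac12\mathcal{I}$ term in \eqref{f11} is $\omega$-independent, it is automatically absorbed at order $\omega^0$ and cannot contaminate the $\omega^2$ identity. Your approach is conceptually more transparent but purchases that clarity at the cost of the regularity discussion you flag at the end; the paper's asymptotic matching trades that analysis for a short algebraic comparison.
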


\begin{proof}
For any $\bm{\phi} \in H^{-\frac{1}{2}}(\partial D)^d$, we first note  that:
\begin{align}\label{f19}
\int_{\partial D_{j}}
& \left( -\frac{1}{2}\mathcal{I} + (\mathcal{K}^{-\bm{\alpha},0, 0}_D)^* \right)[\bm{\phi}](\bm{x})  \mathrm{d}\sigma(\bm{x})\nonumber\\
&= \int_{\partial D_{j}} \frac{\partial}{\partial\bm{\nu}} \mathcal{S}^{\bm{\alpha}, 0,0}_D[\bm{\phi}](\bm{x})  \mathrm{d}\sigma(\bm{x}) \nonumber \\
&= \int_{D_{j}} \mathcal{L}^{\lambda,\mu} \mathcal{S}^{\bm{\alpha},0,0}_D[\bm{\phi}](\bm{x})  \mathrm{d}\bm{x} = 0.
\end{align}

Using asymptotic expansions \eqref{f9-} and \eqref{f10-}, we develop two expressions for the same integral. First:
\begin{align*}
\int_{\partial D_j} 
&\left( -\frac{1}{2}\mathcal{I} + (\mathcal{K}^{-\bm{\alpha},\bm{\rho},\omega}_D)^* \right) [\bm{\phi}](\bm{x})  \mathrm{d}\sigma(\bm{x})\\
&= \int_{\partial D_j} \left( -\frac{1}{2}\mathcal{I} + (\mathcal{K}^{-\bm{\alpha},0,0}_D)^* + \omega^2 \mathcal{K}^{\bm{\alpha},\bm{\rho},0}_{D,1} \right) [\bm{\phi}](\bm{x})  \mathrm{d}\sigma(\bm{x}) + \mathcal{O}(\omega^4) \\
&= \omega^2 \int_{\partial D_j} \mathcal{K}^{\bm{\alpha},\bm{\rho},0}_{D,1}[\bm{\phi}](\bm{x})  \mathrm{d}\sigma(\bm{x}) + \mathcal{O}(\omega^4),
\end{align*}
as $\omega \to 0$, where we used \eqref{f19} to simplify. 

Second, using the properties of layer potentials:
\begin{align*}
\int_{\partial D_j} \left( -\frac{1}{2}\mathcal{I} + (\mathcal{K}^{-\bm{\alpha},\bm{\rho},\omega}_D)^* \right) [\bm{\phi}](\bm{x})  \mathrm{d}\sigma(\bm{x})
&= \int_{\partial D_j} \frac{\partial}{\partial \bm{\nu}} \mathcal{S}^{\bm{\alpha},\bm{\rho},\omega}_D [\bm{\phi}](\bm{x})  \mathrm{d}\sigma(\bm{x}) \\
&= \int_{D_j} \mathcal{L}^{\lambda,\mu} \mathcal{S}^{\bm{\alpha},\bm{\rho},\omega}_D [\bm{\phi}](\bm{x})  \mathrm{d}\bm{x} \\
&= -\bm{\rho} \, \omega^2 \int_{D_j} \mathcal{S}^{\bm{\alpha},\bm{\rho},\omega}_D [\bm{\phi}](\bm{x})  \mathrm{d}\bm{x} \\
&= -\bm{\rho} \, \omega^2 \int_{D_j} \mathcal{S}^{\bm{\alpha},0,0}_D [\bm{\phi}](\bm{x})  \mathrm{d}\bm{x} + \mathcal{O}(\omega^4).
\end{align*}

Equating both expressions at order $\omega^2$  yields:
\begin{align*}
\omega^2 \int_{\partial D_j} \mathcal{K}^{\bm{\alpha},\bm{\rho},0}_{D,1} [\bm{\phi}](\bm{x})  \mathrm{d}\sigma(\bm{x}) + \mathcal{O}(\omega^4)
= -\bm{\rho} \, \omega^2 \int_{D_j} \mathcal{S}^{\bm{\alpha},0,0}_D [\bm{\phi}](\bm{x})  \mathrm{d}\bm{x} + \mathcal{O}(\omega^4).
\end{align*}
Dividing by $\omega^2$ and taking $\omega \to 0$ gives:
\begin{align*}
\int_{\partial D_j} \mathcal{K}^{\bm{\alpha},\bm{\rho},0}_{D,1} [\bm{\phi}](\bm{x})  \mathrm{d}\sigma(\bm{x})
= -\bm{\rho} \int_{D_j} \mathcal{S}^{\bm{\alpha},0,0}_D [\bm{\phi}](\bm{x})  \mathrm{d}\bm{x},
\end{align*}
since the leading-order terms are $\omega$-independent.
\end{proof}

In addition, applying \eqref{f19} yields the following boundary integral relation for all $\bm{\phi} \in H^{-1/2}(\partial D)^d$,
\begin{align}\label{f22}
\int_{\partial D} \left( \dfrac{1}{2} \mathcal{I} + (\mathcal{K}^{-\bm{\alpha}, 0,0}_D)^* \right) [\bm{\phi}](\bm{x}) \mathrm{d}\sigma(\bm{x})
= \int_{\partial D} \bm{\phi}(\bm{x}) \mathrm{d}\sigma(\bm{x}).
\end{align}
This identity plays a crucial role in the proof of Lemma \ref{lem7}.

\begin{proof}[Proof of Lemma \ref{lem7}]
Define $\bm{\varphi}_{js} = (\mathcal{S}_D^{\bm{\alpha},0,0})^{-1}[\bm{e}_{s} \chi_{\partial D_{j}}]$. Using \eqref{f22} and \eqref{f11}, we obtain
\begin{align*}
\bm{C}^{\bm{\alpha},s}_{ij}
&= \int_{\partial D_i} \bm{\varphi}_{js}(\bm{x})  \mathrm{d}\sigma(\bm{x}) \\
&= \int_{\partial D_i} \left( \frac{1}{2} \mathcal{I} + (\mathcal{K}^{-\bm{\alpha},0,0}_D)^* \right) [\bm{\varphi}_{js}](\bm{x})  \mathrm{d}\sigma(\bm{x}) \\
&= \int_{\partial D_i} \frac{\partial}{\partial \bm{\nu}} \mathcal{S}^{\bm{\alpha},0,0}_D [\bm{\varphi}_{js}](\bm{x})  \mathrm{d}\sigma(\bm{x}).
\end{align*}

Set $\bm{u}^{\bm{\alpha}}_{js}(\bm{x}) = \mathcal{S}^{\bm{\alpha},0,0}_D [\bm{\varphi}_{js}](\bm{x})$. Then
\begin{align*}
\bm{C}^{\bm{\alpha},s}_{ij}
&= \int_{\partial D_i} \frac{\partial \bm{u}^{\bm{\alpha}}_{js}(\bm{x})}{\partial \bm{\nu}}  \mathrm{d}\sigma(\bm{x}) \\
&= \sum_{s'=1}^{d} \left( \int_{\partial D_i} \frac{\partial \bm{u}^{\bm{\alpha}}_{js}(\bm{x})}{\partial \bm{\nu}} \cdot \bm{e}_{s'}  \mathrm{d}\sigma(\bm{x}) \right) \bm{e}_{s'},
\end{align*}
where $\bm{u}^{\bm{\alpha}}_{js}$ satisfies the boundary value problem:
\begin{align*}
\begin{cases}
\mathcal{L}^{\lambda,\mu} \bm{u}^{\bm{\alpha}}_{js}(\bm{x}) = \bm{0}, & \bm{x} \in Y \setminus \overline{D}, \\
\bm{u}^{\bm{\alpha}}_{js}(\bm{x}) = \bm{e}_{s} \chi_{\partial D_{j}}(\bm{x}), & \bm{x} \in \partial D, \\
\bm{u}^{\bm{\alpha}}_{js}(\bm{x}) e^{-\mathrm{i}\bm{\alpha}\cdot \bm{x}} \text{ is periodic}.
\end{cases}
\end{align*}
For $\bm{\alpha}$-quasiperiodic functions $\bm{u}, \bm{v} \in H^1(Y \setminus \overline{D})^d$, the following property  holds \cite[p.~160]{2009Layer}:
\begin{align*}
\int_{\partial Y} \frac{\partial \bm{u}}{\partial \bm{\nu}} \cdot \overline{\bm{v}}  \mathrm{d}\sigma = 0.
\end{align*}
Applying integration by parts, for $\bm{u}^{\bm{\alpha}}_{js}(\bm{x}),  \bm{u}^{\bm{\alpha}} _{is'}(\bm{x}) \in H^1(Y\setminus \overline{D})^d$,   we obtain
\begin{align*}
&\int_{\partial D_i}\frac{\partial\bm{u}^{\bm{\alpha}}_{js}(\bm{x})}{\partial\bm{\nu}}\cdot \bm{e}_{s'}\chi_{\partial D_i}(\bm{x}){\rm d}\sigma(\bm{x})\\
=&\int_{\partial D_i}\frac{\partial\bm{u}^{\bm{\alpha}}_{js}(\bm{x})}{\partial\bm{\nu}}\cdot \bm{e}_{s'}\chi_{\partial D_i}(\bm{x}){\rm d}\sigma(\bm{x})
+\sum_{k=1,k\neq i}^N\int_{\partial D_k}\frac{\partial\bm{u}^{\bm{\alpha}}_{js}(\bm{x})}{\partial\bm{\nu}}\cdot \bm{e}_{s'}\chi_{\partial D_i}(\bm{x}){\rm d}\sigma(\bm{x})\\
&\qquad-\int_{\partial Y}\frac{\partial\bm{u}^{\bm{\alpha}}_{js}(\bm{x})}{\partial\bm{\nu}}\cdot \bm{e}_{s'}\chi_{\partial D_i}(\bm{x}){\rm d}\sigma(\bm{x})\\
=&\sum_{k=1,}^N\int_{\partial D_k}\frac{\partial\bm{u}^{\bm{\alpha}}_{js}(\bm{x})}{\partial\bm{\nu}}\cdot\overline{ \bm{u}^{\bm{\alpha}} _{is'}(\bm{x})}{\rm d}\sigma(\bm{x})
-\int_{\partial Y}\frac{\partial\bm{u}^{\bm{\alpha}}_{js}(\bm{x})}{\partial\bm{\nu}}\cdot \overline{ \bm{u}^{\bm{\alpha}}_{is'}(\bm{x})}{\rm d}\sigma(\bm{x})\\
=&-\int_{Y\backslash D}\mathcal{L}^{\lambda,\mu}\bm{u}^{\bm{\alpha}} _{js}(\bm{x})\cdot\overline{ \bm{u}^{\bm{\alpha}} _{is'}(\bm{x})}{\rm d}\bm{x}
-E_{Y\setminus \overline{D}}(\bm{u}^{\bm{\alpha}}_{js},\overline{ \bm{u}^{\bm{\alpha}}_{is'}})\\
=&-\overline{E_{Y\setminus \overline{D}}(\bm{u}^{\bm{\alpha}} _{is'},\overline{ \bm{u}^{\bm{\alpha}} _{js}})}\\
=&\int_{\partial D_j}\overline{\frac{\partial\bm{u}^{\bm{\alpha}}_{is'}(\bm{x})}{\partial\bm{\nu}}\cdot \bm{e}_{s}\chi_{\partial D_j}(\bm{x})}{\rm d}\sigma(\bm{x}),
\end{align*}
where the sesquilinear form $E_{Y \setminus \overline{D}}$ is defined as
\begin{align*}
E_{Y\setminus \overline{D}}(\bm{u}^{\bm{\alpha}}_{js},\overline{ \bm{u}^{\bm{\alpha}}_{is'}})=\int_{Y\setminus \overline{D}}\left[\lambda(\nabla\cdot \bm{u}^{\bm{\alpha}}_{js})(\nabla\cdot \overline{ \bm{u}^{\bm{\alpha}}_{is'}})
+\frac{\mu}{2}(\nabla \bm{u}^{\bm{\alpha}}_{js}+\nabla (\bm{u}^{\bm{\alpha}}_{js})^\top):(\nabla \overline{ \bm{u}^{\bm{\alpha}}_{is'}}+\nabla (\overline{ \bm{u}^{\bm{\alpha}}_{is'}})^\top)\right]{\rm d}\bm{x},
\end{align*}
and $\bm{A}:\bm{B}=\displaystyle\sum_{i,j=1}^{d}a_{ij}b_{ij}$ signifies the Frobenius product between square matrices $\bm{A}=(a_{ij})_{i,j=1}^d$ and $\bm{B}=(b_{ij})_{i,j=1}^d$, as defined in \cite[p.~15]{Ammari2015}.
Consequently, $C_{ij}^{\bm{\alpha},s,s'} = \overline{C_{ji}^{\bm{\alpha},s',s}}$.
\end{proof}

Consider the Neumann boundary value problem:
\begin{align}\label{f13}
\begin{cases}
\mathcal{L}^{\lambda,\mu} \bm{u}(\bm{x}) = \bm{0}, & \bm{x} \in D, \\[2pt]
\displaystyle \frac{\partial \bm{u}(\bm{x})}{\partial \bm{\nu}} = \bm{0}, & \bm{x} \in \partial D, \\[2pt]
\bm{u}(\bm{x}) e^{-\mathrm{i} \bm{\alpha} \cdot \bm{x}} \text{ is periodic}.
\end{cases}
\end{align}
For any solution $\bm{u} \in H^1(D)^d$, the following  equality  holds:
\begin{align*}
\int_{D} \left[ \lambda |\nabla \cdot \bm{u}|^2 + \frac{\mu}{2} \big( \nabla \overline{\bm{u}} + (\nabla \overline{\bm{u}})^\top \big) : \big( \nabla \bm{u} + (\nabla \bm{u})^\top \big) \right] \mathrm{d}\bm{x}
&= \int_{\partial D} \overline{\bm{u}} \cdot \frac{\partial \bm{u}}{\partial \bm{\nu}}  \mathrm{d}\sigma(\bm{x}) 
- \int_{D} \overline{\bm{u}} \cdot \mathcal{L}^{\lambda,\mu} \bm{u}  \mathrm{d}\bm{x} \\
&= 0.
\end{align*}
By the coercivity of the elastic bilinear form, $\bm{u}$ must be constant in $D$; see \cite[p.~160]{2009Layer}. 

This result leads to the following characterization of the kernel:
\begin{lemm}\label{lem3}
Assume that $\bm{\alpha } \neq 0$, and let $\chi_{\partial D_{i}}$ denote the characteristic function of $\partial D_{i}$ for $i = 1, \dots, N$. Then,
\begin{align*}
\ker \left( -\frac{1}{2} \mathcal{I} + (\mathcal{K}^{-\bm{\alpha}, 0,0}_D)^* \right) = \operatorname{span} \Big\{ (\mathcal{S}^{\bm{\alpha},0,0}_D)^{-1} [\bm{e}_{s} \chi_{\partial D_{i}}] \Big\}_{i=1,s=1}^{N, d}.
\end{align*}
\end{lemm}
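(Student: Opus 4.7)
The plan is to reduce the kernel characterization to an interior boundary-value problem for the static Lamé system via the invertibility of $\mathcal{S}^{\bm{\alpha},0,0}_{D}$ established in Lemma~\ref{lem2} and the interior trace of the jump relation~\eqref{f11}. Both inclusions will be handled symmetrically: the containment $\supseteq$ by solving an interior Dirichlet problem with piecewise constant data, and the containment $\subseteq$ by solving the interior Neumann problem~\eqref{f13}.

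For $\supseteq$, I would fix $i\in\{1,\dots,N\}$ and $s\in\{1,\dots,d\}$, set $\bm{\varphi}_{is}=(\mathcal{S}^{\bm{\alpha},0,0}_{D})^{-1}[\bm{e}_{s}\chi_{\partial D_{i}}]$, and study $\bm{u}_{is}:=\mathcal{S}^{\bm{\alpha},0,0}_{D}[\bm{\varphi}_{is}]$ restricted to $D$. By construction $\bm{u}_{is}$ is continuous across $\partial D$ with $\bm{u}_{is}|_{\partial D_i}=\bm{e}_s$, $\bm{u}_{is}|_{\partial D_k}=\bm{0}$ for $k\neq i$, and satisfies $\mathcal{L}^{\lambda,\mu}\bm{u}_{is}=\bm{0}$ on $D$. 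Since constant vector fields lie in the kernel of $\mathcal{L}^{\lambda,\mu}$ and the interior Dirichlet problem for the Lamé system is uniquely solvable under the assumptions $\mu>0$, $d\lambda+2\mu>0$, I would identify $\bm{u}_{is}\equiv \bm{e}_s$ on $D_i$ and $\bm{u}_{is}\equiv\bm{0}$ on $D_k$ for $k\neq i$. This forces $\nabla\bm{u}_{is}\equiv\bm{0}$ throughout $D$, hence $\partial\bm{u}_{is}/\partial\bm{\nu}|_{-}=\bm{0}$ on $\partial D$, and the jump relation~\eqref{f11} applied from the interior yields $(-\tfrac{1}{2}\mathcal{I}+(\mathcal{K}^{-\bm{\alpha},0,0}_{D})^{*})[\bm{\varphi}_{is}]=\bm{0}$.

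For $\subseteq$, I would take $\bm{\varphi}\in\ker(-\tfrac{1}{2}\mathcal{I}+(\mathcal{K}^{-\bm{\alpha},0,0}_{D})^{*})$ and put $\bm{u}=\mathcal{S}^{\bm{\alpha},0,0}_{D}[\bm{\varphi}]|_{D}$. Then $\mathcal{L}^{\lambda,\mu}\bm{u}=\bm{0}$ on $D$ by the mapping property of the single-layer potential, while~\eqref{f11} gives $\partial\bm{u}/\partial\bm{\nu}|_{-}=\bm{0}$ on $\partial D$. Thus $\bm{u}$ solves the homogeneous Neumann problem~\eqref{f13}, and the uniqueness discussion immediately preceding the lemma identifies $\bm{u}$ as a constant on each connected component, $\bm{u}|_{D_i}=\bm{c}_i=\sum_{s=1}^d c_{i,s}\bm{e}_s$. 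Continuity of $\mathcal{S}^{\bm{\alpha},0,0}_{D}[\bm{\varphi}]$ across $\partial D$ then gives $\mathcal{S}^{\bm{\alpha},0,0}_{D}[\bm{\varphi}]|_{\partial D}=\sum_{i,s}c_{i,s}\bm{e}_{s}\chi_{\partial D_{i}}$, and inverting $\mathcal{S}^{\bm{\alpha},0,0}_{D}$ expresses $\bm{\varphi}$ as the required linear combination. Linear independence of the asserted basis follows for free since $(\mathcal{S}^{\bm{\alpha},0,0}_{D})^{-1}$ is a bijection and the characteristic fields $\{\bm{e}_s\chi_{\partial D_i}\}_{i,s}$ are manifestly independent.

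The main obstacle is the Neumann step: coercivity of the elastic bilinear form only controls the symmetric gradient and the divergence of $\bm{u}$, so at first glance one obtains that $\bm{u}|_{D_i}$ is an infinitesimal rigid motion $\bm{c}_i+\bm{B}_i\bm{x}$ with $\bm{B}_i$ skew, rather than a genuine constant. Ruling out the rotational Killing modes—which is what makes the dimension count equal to $dN$ rather than $N(d+d(d-1)/2)$—is exactly the point of the cited reference \cite[p.~160]{2009Layer} invoked in the paragraph preceding the lemma, and it is the only nontrivial ingredient in the argument; once this is in place, everything else amounts to bookkeeping with the jump relation and the isomorphism property of $\mathcal{S}^{\bm{\alpha},0,0}_{D}$.
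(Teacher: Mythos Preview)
Your proposal is correct and follows essentially the same route as the paper: both inclusions are handled by passing to $\bm{u}=\mathcal{S}^{\bm{\alpha},0,0}_{D}[\bm{\varphi}]$ on $D$, invoking the interior jump relation~\eqref{f11}, and reducing to the interior Neumann problem~\eqref{f13} (for $\subseteq$) or the interior Dirichlet problem with piecewise constant data (for $\supseteq$). Your explicit flagging of the rigid-motion subtlety---that coercivity alone yields only $\bm{u}|_{D_i}=\bm{c}_i+\bm{B}_i\bm{x}$ with $\bm{B}_i$ skew---is a sharper reading than the paper's text, which simply defers to \cite[p.~160]{2009Layer} for the claim that $\bm{u}$ is constant; otherwise the arguments coincide.
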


\begin{proof}
For any $\bm{\phi} \in \ker \left( -\displaystyle\frac{1}{2} \mathcal{I} + (\mathcal{K}^{-\bm{\alpha},0,0}_D)^* \right)$, we have
\begin{align*}
\left( -\frac{1}{2} \mathcal{I} + (\mathcal{K}^{-\bm{\alpha},0,0}_D)^* \right) [\bm{\phi}](\bm{x}) 
= \left. \frac{\partial}{\partial \bm{\nu}} \mathcal{S}^{\bm{\alpha},0,0}_D [\bm{\phi}](\bm{x}) \right|_{-} = 0.
\end{align*}
Define $\bm{u}(\bm{x}) = \mathcal{S}^{\bm{\alpha},0,0}_D [\bm{\phi}](\bm{x})$, which satisfies:
\begin{align*}
\begin{cases}
\mathcal{L}^{\lambda,\mu} \bm{u}(\bm{x}) = \bm{0}, & \bm{x} \in D, \\[2pt]
\displaystyle \frac{\partial \bm{u}(\bm{x})}{\partial \bm{\nu}} = \bm{0}, & \bm{x} \in \partial D, \\[2pt]
\bm{u}(\bm{x}) e^{-\mathrm{i} \bm{\alpha} \cdot \bm{x}} \text{ is periodic}.
\end{cases}
\end{align*}
From the energy argument in \eqref{f13}, $\bm{u}|_D$ is constant on each component $D_i$. Thus,
\begin{align*}
\bm{u}|_{\partial D} = \sum_{i=1}^N \bm{u}|_{\partial D_i} = \sum_{i=1}^N \sum_{s=1}^d k_{is} \bm{e}_s \chi_{\partial D_i}.
\end{align*}
Equivalently, the density function satisfies
\begin{align*}
\bm{\phi}(\bm{x}) = \sum_{i=1}^N \sum_{s=1}^d k_{is} (\mathcal{S}^{\bm{\alpha},0,0}_D)^{-1} [\bm{e}_s \chi_{\partial D_i}](\bm{x}),
\end{align*}
proving 
\[\ker \left( -\frac{1}{2} \mathcal{I} + (\mathcal{K}^{-\bm{\alpha},0,0}_D)^* \right) \subset \operatorname{span} \Big\{ (\mathcal{S}^{\bm{\alpha},0,0}_D)^{-1} [\bm{e}_s \chi_{\partial D_i}] \Big\}_{i=1,s=1}^{N,d}.\]

Conversely, since
\begin{align*}
\frac{\partial}{\partial \bm{\nu}} \mathcal{S}^{\bm{\alpha},0,0}_D \Big[ (\mathcal{S}^{\bm{\alpha},0,0}_D)^{-1} [\bm{e}_s \chi_{\partial D_i}] \Big] = 0,
\end{align*}
all $\bm{\phi} \in \operatorname{span} \left\{ (\mathcal{S}^{\bm{\alpha},0,0}_D)^{-1} [\bm{e}_s \chi_{\partial D_i}] \right\}_{i=1,s=1}^{N,d}$ satisfy
\begin{align*}
\left( -\frac{1}{2} \mathcal{I} + (\mathcal{K}^{-\bm{\alpha},0,0}_D)^* \right) [\bm{\phi}] = 0.
\end{align*}
Thus the reverse inclusion holds, completing the proof.
\end{proof}

\begin{proof}[Proof of Theorem \ref{th4}]
Let $\bm{\phi}, \bm{\psi} \in H^{-1/2}(\partial D)^d$. The solution to \eqref{f8} can be expressed as
\begin{align*}
\bm{v}(\bm{x}) = 
\begin{cases}
\mathcal{S}^{\bm{\alpha},\bm{\rho},\omega}_D[\bm{\psi}](\bm{x}), & \bm{x} \in Y \setminus \overline{D}, \\
\mathcal{S}^{\bm{\alpha},\bm{\rho},\frac{\delta}{\varepsilon}\omega}_D[\bm{\phi}](\bm{x}), & \bm{x} \in D.
\end{cases}
\end{align*}
Continuity across $\partial D$ implies:
\begin{align*}
\bm{v}(\bm{x})|_{+} = \bm{v}(\bm{x})|_{-} \quad \Leftrightarrow \quad \mathcal{S}^{\bm{\alpha},\bm{\rho},\omega}_D[\bm{\psi}](\bm{x}) = \mathcal{S}^{\bm{\alpha},\bm{\rho},\frac{\delta}{\varepsilon}\omega}_D[\bm{\phi}](\bm{x}), \quad \bm{x} \in \partial D.
\end{align*}
Using the asymptotic expansions in \eqref{f9-} and Lemma \ref{lem2} as $\omega \to 0$, we obtain:
\begin{align}\label{f15}
\left( \mathcal{S}^{\bm{\alpha},0,0}_D + \sum_{k=1}^{\infty} \omega^{2k} \mathcal{S}^{\bm{\alpha},\bm{\rho},0}_{D,k} \right)[\bm{\psi}](\bm{x})
&= \left( \mathcal{S}^{\bm{\alpha},\bm{\rho},0}_D + \sum_{k=1}^{\infty} \left( \frac{\delta}{\varepsilon} \right)^k \omega^{2k} \mathcal{S}^{\bm{\alpha},\bm{\rho},0}_{D,k} \right)[\bm{\phi}](\bm{x}) \nonumber\\
\mathcal{S}^{\bm{\alpha},0,0}_D[\bm{\psi}](\bm{x}) &= \mathcal{S}^{\bm{\alpha},0,0}_D[\bm{\phi}](\bm{x}) + \mathcal{O}\left( \omega^2 + \frac{\delta}{\varepsilon} \omega^2 \right) \nonumber \\
\bm{\psi}(\bm{x}) &= \bm{\phi}(\bm{x}) + \mathcal{O}(\omega^2).
\end{align}

The flux continuity condition $\displaystyle\delta \frac{\partial \bm{v}}{\partial \bm{\nu}}\Big|_{+} = \frac{\partial \bm{v}}{\partial \bm{\nu}}\Big|_{-}$ combined with the jump relation \eqref{f11} yields:
\begin{align*}
\delta \left( \frac{1}{2} \mathcal{I} + (\mathcal{K}^{-\bm{\alpha},\bm{\rho},\omega}_D)^* \right)[\bm{\psi}](\bm{x})
= \left( -\frac{1}{2} \mathcal{I} + (\mathcal{K}^{-\bm{\alpha},\bm{\rho},\frac{\delta}{\varepsilon}\omega}_D)^* \right)[\bm{\phi}](\bm{x}), \quad \bm{x} \in \partial D.
\end{align*}

Applying  the asymptotic expansions in \eqref{f10-},  we derive
\begin{align}\label{f16}
\left( -\frac{1}{2} \mathcal{I} + (\mathcal{K}^{-\bm{\alpha}, 0,0}_D)^* \right)[\bm{\phi}](\bm{x}) 
&+ \frac{\delta}{\varepsilon} \omega^{2} \mathcal{K}^{\bm{\alpha},\bm{\rho},0}_{D,1} [\bm{\phi}](\bm{x}) \nonumber\\
&- \delta \left( \frac{1}{2} \mathcal{I} + (\mathcal{K}^{-\bm{\alpha}, 0,0}_D)^* \right)[\bm{\psi}](\bm{x}) 
= \mathcal{O}\left( \delta \omega^2 + \frac{\delta^2}{\varepsilon^2} \omega^{4} \right).
\end{align}
Using the last equality in \eqref{f15}, \eqref{f16} can be rewritten as 
\begin{align}\label{f16-1}
\left( -\frac{1}{2} \mathcal{I} + (\mathcal{K}^{-\bm{\alpha}, 0,0}_D)^* \right)
&[\bm{\phi}](\bm{x}) 
+ \frac{\delta}{\varepsilon} \omega^{2} \mathcal{K}^{\bm{\alpha},\bm{\rho},0}_{D,1} [\bm{\phi}](\bm{x}) \nonumber\\
&- \delta \left( \frac{1}{2} \mathcal{I} + (\mathcal{K}^{-\bm{\alpha}, 0,0}_D)^* \right)[\bm{\phi}](\bm{x}) - \mathcal{O}\left( \delta \omega^2 \right)
= \mathcal{O}\left( \delta \omega^2 + \frac{\delta^2}{\varepsilon^2} \omega^{4} \right).
\end{align}

Following Lemma \ref{lem3}, we can decompose $\bm{\phi}$ as:
\begin{align*}
\bm{\phi}(\bm{x}) = \sum_{i=1}^N \sum_{s=1}^d k_{is} (\mathcal{S}^{\bm{\alpha},0,0}_D)^{-1} [\bm{e}_{s} \chi_{\partial D_i}] + \widetilde{\bm{\phi}}(\bm{x}),
\end{align*}
where $ \widetilde{\bm{\phi}}(\bm{x}) $  belongs to  the orthogonal complement of  $\operatorname{span} \Big\{ (\mathcal{S}^{\bm{\alpha},0,0}_D)^{-1} [\bm{e}_{s} \chi_{\partial D_{i}}] \Big\}_{i=1,s=1}^{N, d}$.
Substituting into \eqref{f16-1} gives:
\begin{align*}
\left( -\frac{1}{2} \mathcal{I} + (\mathcal{K}^{-\bm{\alpha}, 0,0}_D)^* \right)[\widetilde{\bm{\phi}}](\bm{x}) &= \mathcal{O}\left( \delta + \frac{\delta}{\varepsilon} \omega^2 \right), \\
\widetilde{\bm{\phi}}(\bm{x}) &= \mathcal{O}\left( \delta + \frac{\delta}{\varepsilon} \omega^2 \right).
\end{align*}
Thus 
\[\bm{\phi}(\bm{x}) = \sum_{i=1}^N \sum_{s=1}^d k_{is} (\mathcal{S}^{\bm{\alpha},0,0}_D)^{-1} [\bm{e}_{s} \chi_{\partial D_i}] + \mathcal{O}\left( \delta + \frac{\delta}{\varepsilon} \omega^2 \right).\]
Now substituting above  into \eqref{f16-1}  and integrating  over $\partial D_j$,  using \eqref{f19}, \eqref{f22}, and Lemma \ref{lem4}, we obatin
\begin{align*}
&-\frac{\delta}{\varepsilon} \omega^{2} \bm{\rho} \sum_{i=1}^N \sum_{s=1}^d k_{is} \int_{\partial D_j} \bm{e}_{s} \chi_{\partial D_i}  \mathrm{d}\sigma(\bm{x}) 
- \delta \sum_{i=1}^N \sum_{s=1}^d k_{is} \int_{\partial D_j} (\mathcal{S}^{\bm{\alpha},0,0}_D)^{-1} [\bm{e}_{s} \chi_{\partial D_i}]  \mathrm{d}\sigma(\bm{x}) \\
&= \mathcal{O}\left( \delta^2 + \delta \omega^2 + \frac{\delta^2}{\varepsilon} \omega^2 + \frac{\delta^2}{\varepsilon^2} \omega^{4} \right).
\end{align*}
Applying \eqref{f18} yields:
\begin{align*}
-\frac{1}{\varepsilon} |D_j| \omega^{2} \bm{\rho} \sum_{s=1}^d k_{js} \bm{e}_s = \sum_{i=1}^N \sum_{s=1}^d k_{is} \bm{C}^{\bm{\alpha},s}_{ji} + \mathcal{O}\left( \delta + \omega^2 + \frac{\delta}{\varepsilon} \omega^2 + \frac{\delta}{\varepsilon^2} \omega^{4} \right).
\end{align*}

This system is compactly expressed as:
\begin{align*}
\omega^2 \bm{\vartheta} = -\varepsilon \bm{H} \bm{\vartheta} + \mathcal{O}\left( \delta \varepsilon + \varepsilon \omega^2 + \delta \omega^2 + \frac{\delta}{\varepsilon} \omega^{4} \right),
\end{align*}
where $\bm{\vartheta} = (k_{11}, \dots, k_{1d}, \dots, k_{N1}, \dots, k_{Nd})^\top \neq \bm{0}$, and the $dN \times dN$ block matrix $\bm{H}$  and its entries are  defined by 
\begin{align*}
\bm{H} =
\begin{pmatrix}
\bm{H}_{11} & \cdots & \bm{H}_{1N} \\
\vdots & \ddots & \vdots \\
\bm{H}_{N1} & \cdots & \bm{H}_{NN}
\end{pmatrix}, 
\quad 
\bm{H}_{ij} = \frac{1}{|D_i|} \bm{\rho}^{-1} \begin{pmatrix} \bm{C}^{\alpha,1}_{ij} & \bm{C}^{\alpha,2}_{ij} & \cdots & \bm{C}^{\alpha,d}_{ij} \end{pmatrix}.
\end{align*}

The eigenvalues $\xi_i$ of $\bm{H}$ then satisfy:
\begin{align*}
\omega_i = \sqrt{ -\varepsilon \xi_i } + \mathcal{O}(\varepsilon), \quad i = 1, \cdots, dN.
\end{align*}
\end{proof}

\section{Resonator-modulated system}\label{sec4}

In this section, we analyze the $d$-dimensional elastic wave equation. 
Building on the scaling $\omega = \mathcal O(\varepsilon^{1/2})$ established for non-modulated systems, we investigate a configuration where time modulation is confined to the resonators, while the surrounding medium remains time-independent. 
Drawing on \eqref{f46} and \eqref{f7-}, we specifically examine density modulation with Lamé parameters and mass densities satisfying
\begin{align*}
\bm{\rho}(\bm{x},t) =
\begin{cases}
\bm{\rho}, & \bm{x} \in \mathbb{R}^{d} \setminus \overline{\mathcal{C}}, \\
\widetilde{\bm{\rho}} \widetilde{\bm{\rho}}_{i}(t), & \bm{x} \in \mathcal{C}_{i},
\end{cases} \quad 
(\widehat{\lambda}(\bm{x},t), \widehat{\mu}(\bm{x},t)) =
\begin{cases}
(\lambda, \mu), & \bm{x} \in \mathbb{R}^{d} \setminus \overline{\mathcal{C}}, \\
(\widetilde{\lambda}, \widetilde{\mu}), & \bm{x} \in \mathcal{C},
\end{cases}
\end{align*}
where $\widetilde{\bm{\rho}}_{i}(t) = \operatorname{diag}\big( \widetilde{\rho}_{i1}(t), \cdots, \widetilde{\rho}_{id}(t) \big)$ is $T$-periodic.

Given that $\bm{U}(\bm{x},t)e^{-\mathrm{i}\omega t}$ is $T$-periodic, we employ the Fourier series expansions
\begin{align*}
\bm{U}(\bm{x},t) &= e^{\mathrm{i}\omega t} \sum_{n=-\infty}^{\infty} \bm{v}_{n}(\bm{x}) e^{\mathrm{i} n \Omega t}, \\
\widetilde{\bm{\rho}}_{i}(t) &= \operatorname{diag}\left( \sum_{n=-\infty}^{\infty} a^1_{in} e^{\mathrm{i} n \Omega t}, \dots, \sum_{n=-\infty}^{\infty} a^d_{in} e^{\mathrm{i} n \Omega t} \right),
\end{align*}
where $\bm{v}_{n}(\bm{x}) = \big( v^1_{n}(\bm{x}), \cdots, v^d_{n}(\bm{x}) \big)^\top$.
Equation \eqref{f5} is then equivalent to the following system for all $n \in \mathbb{Z}$:
\begin{align}\label{f20}
\begin{cases}
\mathcal{L}^{\lambda, \mu} \bm{v}_{n}(\bm{x}) + \bm{\rho} (\omega + n \Omega)^2 \bm{v}_{n}(\bm{x}) = \bm{0}, & \bm{x} \in Y \setminus \overline{D}, \\[2pt]
\mathcal{L}^{\lambda, \mu} \bm{v}_{n}(\bm{x}) + \dfrac{\delta}{\varepsilon} \bm{\rho} \bm{v}_{in}(\bm{x}) = \bm{0}, & \bm{x} \in D_i, \\[2pt]
\bm{v}_{n}\big|_{+} = \bm{v}_{n}\big|_{-}, & \bm{x} \in \partial D, \\[2pt]
\delta \dfrac{\partial \bm{v}_{n}}{\partial \bm{\nu}}\bigg|_{+} = \dfrac{\partial \bm{v}_{n}}{\partial \bm{\nu}}\bigg|_{-}, & \bm{x} \in \partial D, \\[2pt]
\bm{v}_{n}(\bm{x}) e^{-\mathrm{i}\bm{\alpha} \cdot \bm{x}} \text{ is periodic},
\end{cases}
\end{align}
with the coupling term 
\begin{align*} 
\bm{v}_{in}(\bm{x}) = \left( \sum_{m=-\infty}^{\infty} a^1_{im} (\omega + (n-m)\Omega)^2 v^1_{n-m}(\bm{x}), \cdots, \sum_{m=-\infty}^{\infty} a^d_{im} (\omega + (n-m)\Omega)^2 v^d_{n-m}(\bm{x}) \right)^\top.
\end{align*}

Assuming the solution is normalized such that $\|\bm{v}_0\|_{H^1(Y)^d} = 1$, and leveraging the continuous differentiability of $\bm{U}(\bm{x},t)$ in $t$ with
\begin{align*}
\partial_t \bm{U}(\bm{x},t) = \sum_{n=-\infty}^{\infty} \mathrm{i}(\omega + n\Omega)\bm{v}_n(\bm{x}) e^{\mathrm{i}(\omega + n\Omega)t},
\end{align*}
we obtain the asymptotic decay
\begin{align}\label{f37}
\|\bm{v}_n\|_{H^1(Y)^d} = o\left(\frac{1}{|n|}\right) \quad \text{as} \quad |n| \to \infty.
\end{align}

We now consider modulated densities with finite Fourier representations:
\begin{align*}
\widetilde{\bm{\rho}}_{i}(t) = \operatorname{diag}\left( \sum_{n=-M}^{M} a^1_{in} e^{\mathrm{i}n\Omega t},\ \dots,\ \sum_{n=-M}^{M} a^d_{in} e^{\mathrm{i}n\Omega t} \right),
\end{align*}
where $M = \mathcal{O}(\varepsilon^{-\gamma/2})$ for $0 < \gamma < 1$. Our objective is to determine subwavelength quasi-frequencies $\omega$ satisfying \eqref{f20}.
Following Theorem \ref{th4} and Definition \ref{defi1}, we impose the asymptotic orders:
\begin{align*}
\omega = \mathcal{O}(\varepsilon^{1/2}), \quad \Omega = \mathcal{O}(\varepsilon^{1/2}).
\end{align*}
This yields the following  result.
\begin{lemm}\label{lem5}
As $\delta, \varepsilon \to 0$, the functions $\bm{v}_{n}(\bm{x})$ for $|n| \leq M$ remain approximately stable, satisfying 
\begin{align*}
\bm{v}_{n}(\bm{x}) = \bm{c}_{in} + \mathcal{O}(\varepsilon^{(1-\gamma)/2}), \quad \bm{x} \in D_i, \ i = 1, \dots, N,
\end{align*}
where $\bm{c}_{in}$ are constant vectors in $\mathbb{R}^d$.
\end{lemm}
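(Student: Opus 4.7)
The plan is to establish the estimate via an energy argument on each resonator $D_i$. Restricted to $D_i$, the system~\eqref{f20} reads $\mathcal{L}^{\lambda,\mu}\bm{v}_n = -(\delta/\varepsilon)\bm{\rho}\bm{v}_{in}$ with the transmission condition $(\partial\bm{v}_n/\partial\bm{\nu})|_- = \delta(\partial\bm{v}_n/\partial\bm{\nu})|_+$ on $\partial D_i$. Under the scaling $\omega,\Omega = \mathcal{O}(\varepsilon^{1/2})$ and $M = \mathcal{O}(\varepsilon^{-\gamma/2})$, both the interior source and the Neumann data on $\partial D_i$ are small, which suggests that $\bm{v}_n|_{D_i}$ should lie close to the kernel of the homogeneous Neumann Lam\'e operator---asymptotically a constant vector $\bm{c}_{in} \in \mathbb{R}^d$ on each $D_i$.

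Concretely, I would decompose $\bm{v}_n|_{D_i} = \bm{c}_{in} + \bm{w}_n$, where $\bm{c}_{in} := |D_i|^{-1}\int_{D_i}\bm{v}_n\,\mathrm{d}\bm{x}$, so that $\bm{w}_n$ has zero mean (with an additional projection onto the rotational part of $\ker E_{D_i}$ to legitimately invoke Korn's second inequality). Since $\bm{c}_{in}$ lies in $\ker\mathcal{L}^{\lambda,\mu}$ and annihilates the bilinear form $E_{D_i}(\cdot,\cdot)$ introduced in Section~\ref{sec3}, testing the interior equation against $\overline{\bm{w}_n}$ and integrating by parts produces
\begin{align*}
E_{D_i}(\bm{w}_n,\overline{\bm{w}_n})
= \frac{\delta}{\varepsilon}\int_{D_i}\bm{\rho}\,\bm{v}_{in}\cdot\overline{\bm{w}_n}\,\mathrm{d}\bm{x}
+ \delta\int_{\partial D_i}\frac{\partial\bm{v}_n}{\partial\bm{\nu}}\bigg|_{+}\!\cdot\overline{\bm{w}_n}\,\mathrm{d}\sigma(\bm{x}),
\end{align*}
and Korn's inequality bounds the LHS below by $c\|\bm{w}_n\|_{H^1(D_i)^d}^2$.

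For the RHS, the uniform estimate $(\omega+(n-m)\Omega)^2 = \mathcal{O}(\varepsilon^{1-\gamma})$ for $|n|,|m|\leq M$, combined with $\delta/\varepsilon = \mathcal{O}(1)$ and uniform $L^2$-control of the modes $\bm{v}_{n-m}$ on $D_i$ (supplied by the normalization $\|\bm{v}_0\|_{H^1(Y)^d}=1$ and the decay~\eqref{f37}), gives $\|\bm{v}_{in}\|_{L^2(D_i)^d} = \mathcal{O}(\varepsilon^{1-\gamma})$. The volume contribution is thus $\mathcal{O}(\varepsilon^{1-\gamma}\|\bm{w}_n\|_{L^2(D_i)^d})$. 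For the boundary term, the exterior Neumann trace $(\partial\bm{v}_n/\partial\bm{\nu})|_{+}$ is bounded in $H^{-1/2}(\partial D_i)^d$ by a standard trace estimate applied to $\bm{v}_n \in H^1(Y\setminus\overline{D})^d$, yielding an $\mathcal{O}(\varepsilon\|\bm{w}_n\|_{H^1(D_i)^d})$ contribution. Feeding these into the coercivity inequality gives $\|\bm{w}_n\|_{H^1(D_i)^d} \leq C\varepsilon^{1-\gamma} \subset \mathcal{O}(\varepsilon^{(1-\gamma)/2})$; the pointwise statement on $D_i$ follows via interior elliptic regularity and Sobolev embedding.

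The main obstacle is the uniform control of the coupling term $\bm{v}_{in}$ over $|n| \leq M$: it involves $2M+1 \to \infty$ Fourier modes as $\varepsilon \to 0$, so a naive finite-sum estimate fails. I expect to resolve this using the a priori decay~\eqref{f37}, which, together with the normalization of $\bm{v}_0$, renders $\sum_j\|\bm{v}_j\|_{L^2(Y)^d}$ summable uniformly in $\varepsilon$; Cauchy--Schwarz on the convolution then yields the required bound. A subsidiary technicality is the $H^{-1/2}$ bound on the exterior Neumann trace, which follows from standard elliptic regularity for the quasi-periodic Lam\'e problem on $Y\setminus\overline{D}$, well-posed in the subwavelength regime.
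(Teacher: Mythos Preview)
Your argument is essentially the paper's: an energy identity on each $D_i$, the $\mathcal{O}(\varepsilon^{1-\gamma})$ bound on the interior source coming from the $\omega,\Omega,M$ scaling together with~\eqref{f37}, the $\mathcal{O}(\delta)$ bound on the boundary flux, and then Korn plus Poincar\'e--Wirtinger. The only cosmetic difference is that the paper tests directly against $\overline{\bm{v}_n}$ rather than the mean-free part $\overline{\bm{w}_n}$, so it arrives at $\|\bm{v}_n-\bm{c}_{in}\|_{L^2(D_i)}=\mathcal{O}(\varepsilon^{(1-\gamma)/2})$ without the sharper $\mathcal{O}(\varepsilon^{1-\gamma})$ your version would give.
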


\begin{proof}
For $\bm{v}_{n} \in H^1(D_i)^d$, the  identity gives:
\begin{align*}
\int_{D_i} \Bigl[\lambda |\nabla \cdot \bm{v}_n|^2 &+ \frac{\mu}{2} \big( \nabla \overline{\bm{v}_n} + (\nabla \overline{\bm{v}_n})^\top \big) : \big( \nabla \bm{v}_n + (\nabla \bm{v}_n)^\top \big) \Bigl]  \mathrm{d}\bm{x} \\
&= -\int_{D_i} \overline{\bm{v}_n} \cdot \mathcal{L}^{\lambda,\mu} \bm{v}_n  \mathrm{d}\bm{x} 
+ \int_{\partial D_i} \overline{\bm{v}_n} \cdot \left. \frac{\partial \bm{v}_n}{\partial \bm{\nu}} \right|_{-} \mathrm{d}\sigma(\bm{x}).
\end{align*}
Using the governing equation from \eqref{f20} and the decay estimate \eqref{f37}, we bound each term
\begin{align*}
-\int_{D_i} \overline{\bm{v}_n} \cdot \mathcal{L}^{\lambda,\mu} \bm{v}_n  \mathrm{d}\bm{x} 
&= \frac{\delta}{\varepsilon} \int_{D_i} \overline{\bm{v}_n} \cdot \bm{\rho} \bm{v}_{in}(\bm{x})  \mathrm{d}\bm{x} 
= \mathcal{O}(\varepsilon^{1-\gamma}), \\
\int_{\partial D_i} \overline{\bm{v}_n} \cdot \left. \frac{\partial \bm{v}_n}{\partial \bm{\nu}} \right|_{-} \mathrm{d}\sigma(\bm{x})
&= \delta \int_{\partial D_i} \overline{\bm{v}_n} \cdot \left. \frac{\partial \bm{v}_n}{\partial \bm{\nu}} \right|_{+} \mathrm{d}\sigma(\bm{x})
= \mathcal{O}(\delta),
\end{align*}
where the boundary term estimate follows from \cite{Zhang2025}. Combining these yields
\begin{align*}
\int_{D_i} \left[ \lambda |\nabla \cdot \bm{v}_n|^2 + \frac{\mu}{2} \big( \nabla \overline{\bm{v}_n} + (\nabla \overline{\bm{v}_n})^\top \big) : \big( \nabla \bm{v}_n + (\nabla \bm{v}_n)^\top \big) \right] \mathrm{d}\bm{x} = \mathcal{O}(\varepsilon^{1-\gamma} + \delta).
\end{align*}
By Korn's inequality, there exists $C_K > 0$ such that
\begin{align*}
\|\nabla \bm{v}_n\|_{L^2(D_i)}^2 \leq C_K \int_{D_i} \big| \nabla \bm{v}_n + (\nabla \bm{v}_n)^\top \big|^2 \mathrm{d}\bm{x} = \mathcal{O}(\varepsilon^{1-\gamma} + \delta).
\end{align*}
Applying the Poincar\'{e}-Wirtinger inequality to $\bm{v}_n - \bm{c}_{in}$ where $\bm{c}_{in} =\displaystyle \frac{1}{|D_i|} \int_{D_i} \bm{v}_n \mathrm{d}\bm{x}$, we obtain
\begin{align*}
\|\bm{v}_n - \bm{c}_{in}\|_{L^2(D_i)} \leq C_P \|\nabla \bm{v}_n\|_{L^2(D_i)} = \mathcal{O}(\varepsilon^{(1-\gamma)/2} + \delta^{1/2}).
\end{align*}
Finally, under the asymptotic condition  $\delta \leq  \mathcal{O}(\varepsilon)$ ( see \eqref{relation}), we conclude for $\bm{x} \in D_i$ and $|n| \leq M$:
\begin{align*}
\bm{v}_n(\bm{x}) = \bm{c}_{in} + \mathcal{O}(\varepsilon^{(1-\gamma)/2}),
\end{align*}
where $\bm{c}_{in} \in \mathbb{R}^d$ is constant on each resonator $D_i$.

\end{proof}

Building on the preceding lemmas and layer potential analysis, we derive a system of ODEs that asymptotically characterizes the subwavelength quasi-frequencies of \eqref{f2}. This result provides the foundation for constructing EPs via Floquet theory in the following section.

\begin{theo}\label{th5}
Assume $\bm{\alpha} \neq \bm{0}$ and consider the joint asymptotic limit $\delta, \varepsilon \to 0$. In the subwavelength regime, the quasi-frequencies $\omega\in Y^*_{t}$ of the equation \eqref{f20} are predominantly determined by the quasi-frequencies of the system of ODEs
\begin{align}\label{f26}
\sum_{j=1}^N\sum_{s=1}^d y^{\omega}_{js}(t)\bm{C}^{\bm{\alpha},s}_{ij}
=\frac{|D_i|}{\varepsilon}\bm{\rho}\widetilde{\bm{\rho}}_i(t)\left(\sum_{s=1}^d\frac{\mathrm{d}^2 y^{\omega}_{is}(t)}{\mathrm{d}t^2}\bm{e}_s\right), \quad i = 1, \dots, N,
\end{align}
where $\widetilde{\bm{\rho}}_i(t)$ denotes the modulated density in resonator $\mathcal C_i$.

\end{theo}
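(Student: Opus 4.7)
The plan is to lift the single-resonator analysis of Theorem \ref{th4} to each Fourier mode $\bm{v}_n$ of \eqref{f20}, derive one algebraic identity per mode relating the interior constants $\bm{c}_{in}$ furnished by Lemma \ref{lem5}, and then resum these identities in time to recover the ODE system \eqref{f26}.

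First, I would represent each mode in the exterior by a quasiperiodic single-layer potential,
\begin{align*}
\bm{v}_n(\bm{x})=\mathcal{S}^{\bm{\alpha},\bm{\rho},\omega+n\Omega}_{D}[\bm{\psi}_n](\bm{x}),\qquad \bm{x}\in Y\setminus\overline{D}.
\end{align*}
For $|n|\le M=\mathcal{O}(\varepsilon^{-\gamma/2})$ the effective frequency satisfies $\omega+n\Omega=\mathcal{O}(\varepsilon^{(1-\gamma)/2})\to 0$, so the low-frequency expansions \eqref{f9-}--\eqref{f10-} apply. Combining continuity across $\partial D$ with the interior approximation $\bm{v}_n|_{D_i}=\bm{c}_{in}+\mathcal{O}(\varepsilon^{(1-\gamma)/2})$ from Lemma \ref{lem5} and the invertibility of $\mathcal{S}^{\bm{\alpha},0,0}_{D}$ (Lemma \ref{lem2}), I obtain, to leading order,
\begin{align*}
\bm{\psi}_n=\sum_{j=1}^{N}\sum_{s=1}^{d}c^{s}_{jn}\,(\mathcal{S}^{\bm{\alpha},0,0}_{D})^{-1}[\bm{e}_s\chi_{\partial D_j}].
\end{align*}

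Next I would integrate the flux transmission condition $\delta\,\partial_{\bm{\nu}}\bm{v}_n|_{+}=\partial_{\bm{\nu}}\bm{v}_n|_{-}$ over each $\partial D_i$. The outer side, via the jump formula \eqref{f11}, the boundary identity \eqref{f22}, and the definition \eqref{f18} of $\bm{C}^{\bm{\alpha},s}_{ij}$, evaluates at leading order to $\delta\sum_{j,s}c^{s}_{jn}\bm{C}^{\bm{\alpha},s}_{ij}$. The inner side, by the divergence theorem and the interior equation of \eqref{f20}, equals
\begin{align*}
\int_{D_i}\mathcal{L}^{\lambda,\mu}\bm{v}_n\,\mathrm{d}\bm{x}=-\frac{\delta}{\varepsilon}\int_{D_i}\bm{\rho}\,\bm{v}_{in}(\bm{x})\,\mathrm{d}\bm{x},
\end{align*}
which, after substituting Lemma \ref{lem5} into the coupling term $\bm{v}_{in}$, reduces to $-\frac{\delta}{\varepsilon}|D_i|\,\bm{\rho}\bigl(\sum_{m}a^{s}_{im}(\omega+(n-m)\Omega)^{2}c^{s}_{i(n-m)}\bigr)_{s=1}^{d}$. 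I would then pass to the time domain by setting $y^{\omega}_{is}(t):=e^{\mathrm{i}\omega t}\sum_{n}c^{s}_{in}e^{\mathrm{i}n\Omega t}$, multiplying the $n$-th identity by $e^{\mathrm{i}(\omega+n\Omega)t}$, and summing over $n\in\mathbb{Z}$. The exterior side assembles into $\delta\sum_{j,s}y^{\omega}_{js}(t)\bm{C}^{\bm{\alpha},s}_{ij}$, while the interior side is recognized as the Fourier product $\frac{\delta}{\varepsilon}|D_i|\,\bm{\rho}\widetilde{\bm{\rho}}_i(t)\bigl(\sum_{s}\tfrac{\mathrm{d}^2 y^{\omega}_{is}}{\mathrm{d}t^{2}}\bm{e}_s\bigr)$, since the $n$-th Fourier coefficient of $\widetilde\rho_{is}(t)\tfrac{\mathrm{d}^2 y^{\omega}_{is}}{\mathrm{d}t^2}$ is precisely $-\sum_{m}a^{s}_{im}(\omega+(n-m)\Omega)^{2}c^{s}_{i(n-m)}$. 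Cancelling the common factor $\delta$ yields \eqref{f26}.

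The main obstacle is uniform error control across all Fourier modes. One must verify that the $\mathcal{O}$-remainders from \eqref{f9-}--\eqref{f10-}, the $\mathcal{O}(\varepsilon^{(1-\gamma)/2})$ discrepancy of Lemma \ref{lem5}, and the Fourier tail $|n|>M$ governed by the decay \eqref{f37}, all produce perturbations of \eqref{f26} strictly smaller than its principal part. Since $(\omega+n\Omega)^{2}=\mathcal{O}(\varepsilon^{1-\gamma})$ for $|n|\le M$ and $\delta/\varepsilon\le\mathcal{O}(1)$ by \eqref{relation}, a careful bookkeeping in powers of $\varepsilon$ and $\gamma\in(0,1)$ shows that the residual discrepancy between \eqref{f20} and the reduced system is $o(1)$, so the subwavelength quasi-frequencies of the elastic system are determined at leading order by those of the ODE \eqref{f26}.
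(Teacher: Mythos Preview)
Your proposal is correct and follows essentially the same route as the paper: represent each Fourier mode $\bm{v}_n$ in $Y\setminus\overline{D}$ by a quasiperiodic single-layer potential, combine the trace continuity with Lemma~\ref{lem5} and the invertibility of $\mathcal{S}^{\bm{\alpha},0,0}_D$ to identify the density, integrate the transmission relation over $\partial D_i$ using \eqref{f11} and \eqref{f22} to produce $\delta\sum_{j,s}c^{s}_{jn}\bm{C}^{\bm{\alpha},s}_{ij}$, evaluate the interior side via the divergence theorem and the second equation of \eqref{f20}, and finally resum in $t$. The paper's proof differs only in presentation order (it starts from $\int_{D_i}\mathcal{L}^{\lambda,\mu}\bm{v}_n\,\mathrm{d}\bm{x}$ and works outward) and handles the tail $|n|>M$ by invoking Definition~\ref{defi1} rather than \eqref{f37} directly, but the substance is the same.
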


\begin{proof}
For each $n \in \{-M, \dots, M\}$ and $\bm{x} \in Y \setminus \overline{D}$, the solution $\bm{v}_{n}(\bm{x})$ admits the single-layer potential representation
\begin{align}\label{f23}
\bm{v}_{n}(\bm{x}) = \mathcal{S}^{\bm{\alpha},\bm{\rho},(\omega+n\Omega)}_{D}[\bm{\phi}_{n}](\bm{x}), \quad \bm{\phi}_{n} \in H^{-1/2}(\partial D)^d.
\end{align}
Integrating the interior equation over $D_i$ yields
\begin{align}\label{f25}
\int_{D_i} \mathcal{L}^{\lambda, \mu} \bm{v}_{n}(\bm{x})  \mathrm{d}\bm{x} = -\frac{\delta}{\varepsilon}\bm{\rho} \int_{D_i} \bm{v}_{in}(\bm{x})  \mathrm{d}\bm{x}.
\end{align}
First consider the left-hand side of \eqref{f25}. Using \eqref{f23}, \eqref{f11}, and \eqref{f22}, we obtain
\begin{equation}
\begin{aligned}\label{f24}
\int_{D_i} \mathcal{L}^{\lambda, \mu} \bm{v}_{n}(\bm{x})  \mathrm{d}\bm{x}
&= \int_{\partial D_i} \frac{\partial \bm{v}_{n}}{\partial \bm{\nu}}(\bm{x})  \mathrm{d}\sigma(\bm{x}) \\
&= \delta \int_{\partial D_i} \frac{\partial \bm{v}_{n}}{\partial \bm{\nu}}\Big|_+(\bm{x})  \mathrm{d}\sigma(\bm{x}) \\
&= \delta \int_{\partial D_i} \frac{\partial}{\partial \bm{\nu}} \mathcal{S}^{\bm{\alpha},\bm{\rho},(\omega+n\Omega)}_{D}[\bm{\phi}_{n}](\bm{x})\Big|_+  \mathrm{d}\sigma(\bm{x}) \\
&= \delta \int_{\partial D_i} \left( \frac{1}{2}\mathcal{I} + \left(\mathcal{K}^{-\bm{\alpha},0,0}\right)^* \right)[\bm{\phi}_{n}](\bm{x})  \mathrm{d}\sigma(\bm{x}) + \mathcal{O}(\delta \varepsilon^{1-\gamma}) \\
&= \delta \int_{\partial D_i} \bm{\phi}_{n}(\bm{x})  \mathrm{d}\sigma(\bm{x}) + \mathcal{O}(\delta \varepsilon^{1-\gamma}).
\end{aligned}
\end{equation}
By Lemma \ref{lem5}, the interior trace satisfies
\begin{align*}
\bm{v}_{n}\big|_-(\bm{x}) 
= \sum_{j=1}^{N} \bm{v}_{n}\big|_{-} \chi_{\partial D_j} = \sum_{j=1}^{N} \left( \sum_{s=1}^d k_{jsn} \bm{e}_s \right) \chi_{\partial D_j}(\bm{x}),
\end{align*}
where $k_{jsn}$ are constants. The trace continuity condition $\bm{v}_{n}\big|_+ = \bm{v}_{n}\big|_-$ on $\partial D$ implies
\begin{align*}
\mathcal{S}^{\bm{\alpha},0,0}[\bm{\phi}_n](\bm{x}) + \mathcal{O}(\varepsilon^{1-\gamma}) 
= \sum_{j=1}^N \sum_{s=1}^d k_{jsn} \bm{e}_s \chi_{\partial D_j}(\bm{x}), \quad \bm{x} \in \partial D.
\end{align*}
Thus \eqref{f24} becomes
\begin{align*}
\int_{D_i} \mathcal{L}^{\lambda, \mu} \bm{v}_{n}(\bm{x})  \mathrm{d}\bm{x}
&= \delta \sum_{j=1}^N \sum_{s=1}^d k_{jsn} \int_{\partial D_i} \left( \mathcal{S}^{\bm{\alpha},0,0} \right)^{-1} \left[ \bm{e}_s \chi_{\partial D_j} \right] (\bm{x})\mathrm{d}\sigma(\bm{x}) + \mathcal{O}(\delta \varepsilon^{1-\gamma}) \\
&= \delta \sum_{j=1}^N \sum_{s=1}^d k_{jsn} \bm{C}^{\bm{\alpha},s}_{ij} + \mathcal{O}(\delta \varepsilon^{1-\gamma}).
\end{align*}
Now consider the right-hand side of \eqref{f25}:
\begin{align*}
\frac{\delta}{\varepsilon}\bm{\rho} \int_{D_i} \bm{v}_{in}(\bm{x})  \mathrm{d}\bm{x}
&= \frac{\delta}{\varepsilon}\bm{\rho} \begin{pmatrix}
\displaystyle\sum_{m=-M}^{M} a^1_{im} (\omega + (n - m)\Omega)^2 \int_{D_i} v^1_{n-m}(\bm{x})  \mathrm{d}\bm{x} \\ 
\vdots \\
\displaystyle\sum_{m=-M}^{M} a^d_{im} (\omega + (n - m)\Omega)^2 \int_{D_i} v^d_{n-m}(\bm{x})  \mathrm{d}\bm{x}
\end{pmatrix} \\
&= \frac{\delta}{\varepsilon}\bm{\rho} \begin{pmatrix} 
\displaystyle\sum_{m=-M}^{M} a^1_{im} (\omega + (n - m)\Omega)^2 k_{i1(n-m)} |D_i| \\ 
\vdots \\
\displaystyle\sum_{m=-M}^{M} a^d_{im} (\omega + (n - m)\Omega)^2 k_{id(n-m)} |D_i| 
\end{pmatrix}.
\end{align*}
Equating both sides via \eqref{f25} gives
\begin{align*}
\sum_{j=1}^N \sum_{s=1}^d k_{jsn} \bm{C}^{\bm{\alpha},s}_{ij} = -\frac{|D_i|}{\varepsilon} \bm{\rho} \begin{pmatrix} 
\displaystyle\sum_{m=-M}^{M} a^1_{im} (\omega + (n - m)\Omega)^2 k_{i1(n-m)} \\ 
\vdots \\
\displaystyle\sum_{m=-M}^{M} a^d_{im} (\omega + (n - m)\Omega)^2 k_{id(n-m)} 
\end{pmatrix} + \mathcal{O}(\varepsilon^{1-\gamma}).
\end{align*}
Multiplying by $e^{\mathrm{i}(\omega + n\Omega)t}$ and summing over $n \in \{-M, \dots, M\}$ yields
\begin{align*}
\sum_{j=1}^N \sum_{s=1}^d \sum_{n=-M}^M k_{jsn} e^{\mathrm{i}(\omega + n\Omega)t} \bm{C}^{\bm{\alpha},s}_{ij}
= \frac{|D_i|}{\varepsilon} \bm{\rho} \widetilde{\bm{\rho}}_i(t) \sum_{s=1}^d \frac{\mathrm{d}^2}{\mathrm{d}t^2} \left( \sum_{n=-M}^M k_{isn} e^{\mathrm{i}(\omega + n\Omega)t} \right) \bm{e}_s + \mathcal{O}(\varepsilon^{1-\gamma}).
\end{align*}
Define the quasi-periodic functions
\begin{align}
y^{\omega}_{is}(t) = \sum_{n=-\infty}^{\infty} k_{isn} e^{\mathrm{i}(\omega + n\Omega) t},
\end{align}
which satisfy $y^{\omega}_{is}(t + T) = e^{\mathrm{i} \omega T} y^{\omega}_{is}(t)$. By Definition \ref{defi1}, we have the approximation
\begin{align}
y^{\omega}_{is}(t) = \sum_{n=-M}^M k_{isn} e^{\mathrm{i}(\omega + n\Omega) t} + o(1).
\end{align}
Thus we obtain
\begin{align*}
\sum_{j=1}^N \sum_{s=1}^d y^{\omega}_{js}(t) \bm{C}^{\bm{\alpha},s}_{ij} 
= \frac{|D_i|}{\varepsilon} \bm{\rho} \widetilde{\bm{\rho}}_i(t) \sum_{s=1}^d \frac{\mathrm{d}^2 y^{\omega}_{is}(t)}{\mathrm{d}t^2} \bm{e}_s + o(1).
\end{align*}
\end{proof}

Furthermore, equation \eqref{f26} can be expressed as the second-order matrix differential equation
\begin{align}\label{f29}
\frac{\mathrm{d}^2}{\mathrm{d}t^2} \bm{y}(t) = \bm{B}(t) \bm{y}(t),
\end{align}
where $\bm{B}(t)$ is a $dN \times dN$ block matrix defined by
\begin{align*}
\bm{B}(t) &= 
\begin{pmatrix}
\bm{B}_{11}(t) & \cdots & \bm{B}_{1N}(t) \\
\vdots & \ddots & \vdots \\
\bm{B}_{N1}(t) & \cdots & \bm{B}_{NN}(t)
\end{pmatrix},
\end{align*}
with each $d \times d$ submatrix $\bm{B}_{ij}(t)$ given by
\begin{align*}
\bm{B}_{ij}(t) &= \frac{\varepsilon}{|D_i|} \left( \bm{\rho} \widetilde{\bm{\rho}}_{i}(t) \right)^{-1} 
\begin{pmatrix} 
\bm{C}^{\bm{\alpha},1}_{ij} & \bm{C}^{\bm{\alpha},2}_{ij} & \cdots & \bm{C}^{\bm{\alpha},d}_{ij} 
\end{pmatrix}.
\end{align*}

\section{Asymptotic Exceptional Points}\label{sec5}

Inspired by \cite{MR4389752}, this section employs Floquet's theory to develop a method for constructing first-order asymptotic EPs in elastic systems, demonstrating the approach with several examples.

Throughout this section, for a $T$-periodic function $g(t)$, we denote its $m$-th Fourier coefficient by $g^{(m)}$.

\begin{defi}[Exceptional point \cite{MR4389752}]
Let $\bm{A}(c, \cdot) : \mathbb{R} \to \mathbb{C}^{n \times n}$ be a parametrized $T$-periodic matrix-valued function. An element $c_0$ in the parameter space is called an \emph{exceptional point} of the parametrized differential equation
\begin{align*}
\frac{\mathrm{d}\bm{x}}{\mathrm{d}t} = \bm{A}(c,t) \bm{x}
\end{align*}
if the fundamental solution $\bm{X}(c_0,t)$ of $$\frac{\mathrm{d}\bm{x}}{\mathrm{d}t} = \bm{A}(c_0,t)\bm{x}$$ satisfies that $\bm{X}(c_0,T)$ is not diagonalizable.
\end{defi}

According to the  Floquet theory, when the system parameter is at the EP $c_0$, the fundamental solution admits the Floquet representation
\begin{align*}
\bm{X}(c_0,t) = \bm{P}(c_0,t) \exp\left(\bm{F}(c_0) t\right),
\end{align*}
where $\bm{P}(c_0,t)$ is $T$-periodic with $\bm{P}(c_0,0) = \bm{I}_n$. 
After one period $T$, the monodromy matrix simplifies to
\begin{align*}
\bm{X}(c_0,T) = \exp\left(\bm{F}(c_0) T\right).
\end{align*}
At an EP, $\bm{X}(c_0, T)$ is non-diagonalizable and therefore similar to a Jordan matrix containing at least one Jordan block of size $\geq 2$. 

Consider a $2 \times 2$ Jordan block
\begin{align*}
\bm{J}(\sigma) = 
\begin{pmatrix}
\sigma & 1 \\
0 & \sigma
\end{pmatrix}.
\end{align*}
Its matrix exponential is given by
\begin{align*}
\exp\left(\bm{J}(\sigma) t\right) = e^{\sigma t}
\begin{pmatrix}
1 & t \\
0 & 1
\end{pmatrix}.
\end{align*}
This functional form confirms that $\bm{F}(c_0)$ must be non-diagonalizable at the EP.

In this section, we study the family of parameter-dependent ODEs
\begin{align}\label{f28}
    \begin{cases}
        \dfrac{\mathrm{d}\bm{x}_{\eta}}{\mathrm{d}t} = \bm{A}_{\eta}(c,t) \bm{x}_{\eta}, \\[2ex]
        \bm{x}_{\eta}(0) = \bm{x}_{\eta,0},
    \end{cases}
\end{align}
where $\bm{A}_{\eta}(c,\cdot): \mathbb{R} \to \mathbb{C}^{n \times n}$ is a $T$-periodic matrix-valued function depending on parameter $c$, with $\eta \ll 1$ being a small perturbation parameter. The matrix function admits an asymptotic expansion in $\eta$:
\begin{align}\label{f27-}
    \bm{A}_{\eta}(c,t) = \bm{A}_{0}(c) + \eta \bm{A}_{1}(c,t) + \eta^{2} \bm{A}_{2}(c,t) + \cdots,
\end{align}
converging for $|\eta| < r_0$ with $r_0 > 0$ independent of $t$. Following \cite{MR4389752}, we assume:
\begin{enumerate}[label=(\arabic*)]
  \item $\bm{A}_0(c)$ is a constant diagonal matrix;
  \item $\bm{A}_n(c,t)$ ($n \geq 1$) has a finite Fourier series expansion;
  \item The expansion \eqref{f27-} converges uniformly in $t$ for $|\eta| < r_0$.
\end{enumerate}

The fundamental solution $\bm{X}_\eta(c,t)$ of \eqref{f28} admits a Floquet decomposition 
\begin{align*}
\bm{X}_\eta(c,t) = \bm{P}_\eta(c,t) \exp\left(\bm{F}_\eta(c) t\right),
\end{align*}
with both $\bm{P}_\eta(c,t)$ and $\bm{F}_\eta(c)$ analytic in $\eta$. This implies the asymptotic expansions
\begin{align*}
\bm{F}_\eta(c) &= \bm{F}_0(c) + \eta \bm{F}_1(c) + \eta^2 \bm{F}_2(c) + \cdots, \\
\bm{P}_\eta(c,t) &= \bm{P}_0(c,t) + \eta \bm{P}_1(c,t) + \eta^2 \bm{P}_2(c,t) + \cdots,
\end{align*}
where each coefficient matrix is expressible in terms of $\{\bm{A}_k(c,t)\}_{k=0}^n$. The leading-order terms satisfy:
\begin{align*}
(\bm{A}_0(c))_{ii} &= (\bm{F}_0(c))_{ii} + \mathrm{i} \frac{2\pi}{T} n_i, \quad n_i \in \mathbb{Z}, \\
\Im\left((\bm{F}_0(c))_{ii}\right) &\in \left(-\frac{\pi}{T}, \frac{\pi}{T}\right], \\
(\bm{F}_1(c))_{ij} &= \left(\bm{A}_1^{(n_i - n_j)}(c)\right)_{ij}, \quad \text{when } (\bm{F}_0(c))_{ii} = (\bm{F}_0(c))_{jj},
\end{align*}
where $n_i$ are folding numbers and $\bm{A}_1^{(m)}(c)$ denotes the $m$-th Fourier coefficient.
Therefore, at an EP $c_0$, the Floquet exponent matrix 
$$\bm{F}_\eta(c_0) = \sum_{m=0}^{\infty} \eta^m \bm{F}_m(c_0)$$ is non-diagonalizable. This motivates the definition of asymptotic EPs. Consider the system at $c = c_0$:
\begin{align}\label{f50}
\begin{cases}
\dfrac{\mathrm{d}\bm{x}_{\eta}}{\mathrm{d}t} = \bm{A}_{\eta}(c_0,t) \bm{x}_{\eta},\\[2ex]
\bm{x}_{\eta}(0) = \bm{x}_{\eta,0}.
\end{cases}
\end{align}

\begin{defi}[$m$-th order asymptotic exceptional point \cite{MR4389752}]
Suppose $\bm{A}_\eta(t)$ satisfies the assumptions stated above. Then the system \eqref{f50} is at an $m$-th order asymptotic exceptional point $c_0$ if the truncated Floquet exponent
\begin{align*}
\bm{F}^{(m)}_\eta(c_0) = \bm{F}_0(c_0) + \eta \bm{F}_1(c_0) + \cdots + \eta^m \bm{F}_m(c_0)
\end{align*}
is non-diagonalizable for all sufficiently small $\eta > 0$.	
\end{defi}

The literature \cite{doi:10.1137/21M1449427} presents an effective method for computing the eigenvalues of the Floquet exponent matrix $\bm{F}_\eta = \bm{F}_0 + \eta \bm{F}_1 + \eta^2 \bm{F}_2 + \mathcal{O}(\eta^3)$. 
Assuming $\bm{F}_0$ is diagonal, we may write it without loss of generality as
\begin{align*}
\bm{F}_0 = \operatorname{diag}\Bigl(\underbrace{f_0,\dots,f_0}_{r},\,f_1,\dots,f_{n-r}\Bigr),
\quad f_j \neq f_0 \quad (j=1,\dots,n-r),
\quad 1 \leq r \leq n,
\end{align*}
where $f_0$ has algebraic multiplicity $r$. 
The eigenvalues of $\bm{F}_\eta$ associated with the $f_0$-group admit the asymptotic approximation
up to $\mathcal{O}(\eta^2)$ by the eigenvalues of the corresponding effective Hamiltonian 
\begin{align}\label{f49}
	\bm{P}f_0\bm{P}+\eta \bm{P}\bm{F}_1\bm{P}+\eta^2\bm{P}(\bm{F}_1\bm{G}\bm{F}_1+\bm{F}_2)\bm{P}+\mathcal{O}(\eta^3),
\end{align}
where $\bm{P} = \operatorname{diag}(\underbrace{1,\dots,1}_{r},0,\dots,0)$ is the orthogonal projection onto the $f_0$-eigenspace, and operator $\bm{G}$ is given by
\begin{align*}
\bm{G} = \operatorname{diag}\left(0,\dots,0,\,(f_0-f_1)^{-1},\dots,(f_0-f_{n-r})^{-1}\right).
\end{align*}
For a first-order asymptotic EP, the truncated matrix $\bm{F}_0 + \eta \bm{F}_1$ must be non-diagonalizable.
This occurs when at least one eigenvalue has algebraic multiplicity $\geq 2$ but geometric multiplicity strictly less than its algebraic multiplicity

From \eqref{f49}, we conclude that $\bm{F}_0$ must have at least one eigenvalue with algebraic multiplicity $\geq 2$. 
Without loss of generality, assume $(\bm{F}_0)_{ii} = (\bm{F}_0)_{jj}$ for $i \neq j$ is a double eigenvalue. 
The eigenvalues of $\bm{F}_0 + \eta \bm{F}_1$ are determined by the effective Hamiltonian
\begin{align*}
\begin{pmatrix}
(\bm{F}_0)_{ii} & 0 \\
0 & (\bm{F}_0)_{jj}
\end{pmatrix}
+ \eta
\begin{pmatrix}
\left(\bm{A}_1^{(0)}\right)_{ii} & \left(\bm{A}_1^{(n_i-n_j)}\right)_{ij} \\
\left(\bm{A}_1^{(n_j-n_i)}\right)_{ji} & \left(\bm{A}_1^{(0)}\right)_{jj}
\end{pmatrix}.
\end{align*}
The corresponding eigenvalues are
\begin{align*}
\left(\bm{F}_{0}\right)_{i i}+\eta\left(\frac{\left(\bm{A}_{1}^{(0)}\right)_{i i}+\left(\bm{A}_{1}^{(0)}\right)_{j j}}{2} \pm \sqrt{\left(\frac{\left(\bm{A}_{1}^{(0)}\right)_{i i}-\left(\bm{A}_{1}^{(0)}\right)_{j j}}{2}\right)^{2}+\left(\bm{A}_{1}^{\left(n_{i}-n_{j}\right)}\right)_{i j}\left(\bm{A}_{1}^{\left(n_{j}-n_{i}\right)}\right)_{j i}}\right).
\end{align*}
When $\bm{A}_1^{(0)} = \bm{0}$, the matrix $\bm{F}_0 + \eta \bm{F}_1$ is non-diagonalizable if and only if the following holds (see \cite{MR4389752}):
\begin{itemize}
    \item  $\bm{F}_0$ has a repeated eigenvalue ($(\bm{F}_0)_{ii} = (\bm{F}_0)_{jj}$ for $i \neq j$),
    \item Exactly one of the off-diagonal elements $\left(\bm{A}_1^{(n_i-n_j)}\right)_{ij}$ or $\left(\bm{A}_1^{(n_j-n_i)}\right)_{ji}$ vanishes.
\end{itemize}

Building on the preceding analysis, we now construct asymptotic EPs for the wave equation \eqref{f29} derived in Section~\ref{sec4}. For this purpose, we define the modulated density functions
\begin{align*}
\widetilde{\bm{\rho}}_j(\eta, t) &= \operatorname{diag}\left(\widetilde{\rho}_{j1}(\eta, t), \widetilde{\rho}_{j2}(\eta, t), \cdots, \widetilde{\rho}_{jd}(\eta, t)\right), \quad j = 1, \cdots, N, \\
\frac{1}{\widetilde{\rho}_{js}(\eta, t)} &= 1 + \eta \xi_{js}(t), \quad s = 1, \cdots, d,
\end{align*}
where each modulation function $\xi_{ji}(t)$ is $T$-periodic with finite Fourier series. 

The coefficient matrix for \eqref{f29} takes the form $\bm{B}_\eta(t) = \bm{B}_0 + \eta \bm{B}_1(t)$. To simplify notation while preserving parametric dependence, we express the first-order system as
\begin{align}\label{f43}
\frac{\mathrm{d} \bm{y}(t)}{\mathrm{d}t} = \bm{A}_\eta(t) \bm{y}(t), \quad 
\bm{A}_\eta(t) = \bm{A}_0 + \eta \bm{A}_1(t),
\end{align}
with block matrices
\begin{align*}
\bm{A}_0 = \begin{pmatrix}
\bm{0}_{dN} & \bm{I}_{dN} \\
\bm{B}_0 & \bm{0}_{dN}
\end{pmatrix}, \quad
\bm{A}_1(t) = \begin{pmatrix}
\bm{0}_{dN} & \bm{0}_{dN} \\
\bm{B}_1(t) & \bm{0}_{dN}
\end{pmatrix}.	
\end{align*}

Assume $\bm{A}_0$ is diagonalizable with eigenvalues $\lambda_i$ ($i = 1, \dots, 2dN$) listed by algebraic multiplicity. There exists an invertible matrix $\bm{V}$ such that
\begin{align*}
\bm{V}^{-1} \bm{A}_0 \bm{V} = \operatorname{diag}(\lambda_1, \lambda_2, \dots, \lambda_{2dN}).
\end{align*}
Let $\bm{\vartheta}_i$ denote the eigenvector corresponding to $\lambda_i$, with $\bm{\vartheta}'_i = (\vartheta_{1i}, \dots, \vartheta_{(dN)i})^\top$ representing its first $dN$ components. These satisfy
\begin{align*}
(\bm{B}_0 - \lambda_i^2 \bm{I}_{dN}) \bm{\vartheta}'_i = \bm{0}, \quad
\vartheta_{(dN+j)i} = \lambda_i \vartheta_{ji},  \quad i,j=1,\dots,dN,
\end{align*} 
and $\det(\lambda_i^2 \bm{I}_{dN} - \bm{B}_0) = 0$. 

Let $\bm{S}$ diagonalize $\bm{B}_0$ such that
\begin{align*}
\bm{S}^{-1} \bm{B}_0 \bm{S} = \operatorname{diag}(\lambda_1^2, \lambda_2^2, \dots, \lambda_{dN}^2).
\end{align*}
Through eigenvalue reordering and similarity transformation, we obtain the structured decomposition
\begin{align*}
\bm{V}^{-1} \bm{A}_0 \bm{V} = \operatorname{diag}(\lambda_1, \dots, \lambda_{dN}, -\lambda_1, \dots, -\lambda_{dN}),
\end{align*}
with eigenmatrix
\begin{align*}
\bm{V} = \begin{pmatrix}
\bm{S} & \bm{S} \\
\bm{S} \bm{\Phi} & -\bm{S} \bm{\Phi}
\end{pmatrix}, \quad
\bm{\Phi} = \operatorname{diag}(\lambda_1, \lambda_2, \dots, \lambda_{dN}).
\end{align*}
Crucially, $\bm{A}_0$ is diagonalizable if and only if $\bm{B}_0$ is diagonalizable and $\det(\bm{B}_0) \neq 0$.

 Assuming $\lambda_i \neq 0$ for all $i = 1, 2, \dots, dN$ (ensuring $\bm{\Phi}$ is invertible), the block matrix inversion formula yields
\begin{align*}
\bm{V}^{-1} = \frac{1}{2} \begin{pmatrix}
\bm{S}^{-1} & (\bm{S}\bm{\Phi})^{-1} \\
\bm{S}^{-1} & -(\bm{S}\bm{\Phi})^{-1}
\end{pmatrix}.
\end{align*}
The similarity transformation of $\bm{A}_1(t)$ is then given by
\begin{equation}\label{f38}
\bm{V}^{-1}\bm{A}_1(t)\bm{V} = \frac{1}{2} \begin{pmatrix}
\bm{\Phi}^{-1}\bm{S}^{-1}\bm{B}_1(t)\bm{S} & \bm{\Phi}^{-1}\bm{S}^{-1}\bm{B}_1(t)\bm{S} \\
-\bm{\Phi}^{-1}\bm{S}^{-1}\bm{B}_1(t)\bm{S} & -\bm{\Phi}^{-1}\bm{S}^{-1}\bm{B}_1(t)\bm{S}
\end{pmatrix}.
\end{equation}

\begin{assu}\label{assump:system_f43}
We impose the following assumptions on system \eqref{f43}:
\begin{enumerate}[label=(\arabic*)]
  \item Each modulation function $\xi_{ji}(t)$ is $T$-periodic and admits a finite Fourier series with vanishing zero-frequency component.
  \item The matrix $\bm{B}_0$ is diagonalizable and invertible ($\det(\bm{B}_0) \neq 0$), which implies that $\bm{A}_0$ is diagonalizable. 
\end{enumerate}
\end{assu}

Under Assumption~\ref{assump:system_f43}, we propose a construction scheme for asymptotic EPs in system \eqref{f43}.

\begin{theo}\label{th3}
Let Assumption~\ref{assump:system_f43} hold. System \eqref{f43} exhibits a first-order asymptotic EP if and only if:
\begin{enumerate}[label=(\arabic*)]
  \item There exist distinct eigenvalues $\lambda_i^2, \lambda_j^2$ ($i \neq j$) of $\bm{B}_0$ satisfying
        \begin{align*}
          \lambda_i \equiv \lambda_j \pmod{\mathrm{i}\Omega}
        \end{align*}
        for modulation frequency $\Omega = 2\pi/T$;
        
  \item Letting $\bm{S}$ diagonalize $\bm{B}_0$, the corresponding matrix elements of the first-order perturbation in \eqref{f38} satisfy
        \begin{align*}
          \left[ (\bm{S}^{-1}\bm{B}_1\bm{S})^{(n_i - n_j)} \right]_{ij} = 0
          \quad\text{or}\quad
          \left[ (\bm{S}^{-1}\bm{B}_1\bm{S})^{(n_j - n_i)} \right]_{ji} = 0,
        \end{align*}
        where $n_i$, $n_j$ denote the folding numbers of $\lambda_i$, $\lambda_j$ respectively.
\end{enumerate}
\end{theo}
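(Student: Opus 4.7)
The plan is to reduce the EP characterization for system \eqref{f43} to the sharp criterion stated just above the theorem (for a perturbed constant-diagonal system with $\widetilde{\bm{A}}_1^{(0)}=\bm{0}$), using the similarity transformation $\bm{V}$ constructed immediately before the theorem that simultaneously diagonalizes $\bm{A}_0$ and brings the perturbation into the explicit block form \eqref{f38}.

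First I would change variables to $\bm{z}=\bm{V}^{-1}\bm{y}$, producing
\begin{align*}
\frac{\mathrm{d}\bm{z}}{\mathrm{d}t}=\bigl(\widetilde{\bm{A}}_0+\eta\,\widetilde{\bm{A}}_1(t)\bigr)\bm{z},\qquad
\widetilde{\bm{A}}_0=\operatorname{diag}(\lambda_1,\ldots,\lambda_{dN},-\lambda_1,\ldots,-\lambda_{dN}),
\end{align*}
with $\widetilde{\bm{A}}_1(t)=\bm{V}^{-1}\bm{A}_1(t)\bm{V}$ given by \eqref{f38}. Because similarity preserves both the monodromy spectrum and diagonalizability, $c_0$ is an EP of \eqref{f43} if and only if it is an EP of this transformed system, which is now in the canonical form required to invoke the criterion.

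Second I would verify the standing hypothesis $\widetilde{\bm{A}}_1^{(0)}=\bm{0}$ of the criterion. By Assumption~\ref{assump:system_f43}(1) each modulation $\xi_{js}(t)$ is $T$-periodic with vanishing zero Fourier coefficient; since $\bm{B}_1(t)$ depends linearly on the $\xi_{js}(t)$ through the identity $(\bm{\rho}\widetilde{\bm{\rho}}_j(\eta,t))^{-1}_{ss}=\rho_s^{-1}(1+\eta\,\xi_{js}(t))$, one obtains $\bm{B}_1^{(0)}=\bm{0}$. By the linearity in $\bm{B}_1(t)$ of every block in \eqref{f38}, the same holds for $\widetilde{\bm{A}}_1^{(0)}$. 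Next I would analyze the folded spectrum: writing $(\widetilde{\bm{A}}_0)_{kk}=(\bm{F}_0)_{kk}+\mathrm{i}\tfrac{2\pi}{T}n_k$ with $n_k\in\mathbb{Z}$, a repeated eigenvalue $(\bm{F}_0)_{ii}=(\bm{F}_0)_{jj}$ with $i\neq j$ in the upper block is exactly the congruence $\lambda_i\equiv\lambda_j\pmod{\mathrm{i}\Omega}$ appearing in condition (1); the distinctness of $\lambda_i^2,\lambda_j^2$ ensures $\lambda_i\neq\lambda_j$, so the coalescence genuinely comes from folding rather than from an a priori degeneracy of $\bm{A}_0$.

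Third I would translate the off-diagonal vanishing condition from the criterion back to $\bm{B}_1$. The $(i,j)$ and $(j,i)$ entries of $\widetilde{\bm{A}}_1(t)$ in the top-left block of \eqref{f38} are
\begin{align*}
\bigl(\widetilde{\bm{A}}_1(t)\bigr)_{ij}=\tfrac{1}{2}\lambda_i^{-1}\bigl(\bm{S}^{-1}\bm{B}_1(t)\bm{S}\bigr)_{ij},\qquad
\bigl(\widetilde{\bm{A}}_1(t)\bigr)_{ji}=\tfrac{1}{2}\lambda_j^{-1}\bigl(\bm{S}^{-1}\bm{B}_1(t)\bm{S}\bigr)_{ji},
\end{align*}
and because $\lambda_i,\lambda_j\neq\bm{0}$ (guaranteed by $\det(\bm{B}_0)\neq 0$), a Fourier coefficient at these positions vanishes if and only if the corresponding Fourier coefficient of $\bm{S}^{-1}\bm{B}_1(t)\bm{S}$ vanishes. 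Applying the criterion "exactly one of $(\widetilde{\bm{A}}_1^{(n_i-n_j)})_{ij}$ or $(\widetilde{\bm{A}}_1^{(n_j-n_i)})_{ji}$ is zero" then yields condition (2) verbatim, and combining the equivalences in both directions gives the claimed if-and-only-if.

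The main obstacle is to ensure that condition (1) exhausts the coalescences of $\bm{F}_0$ that can yield a first-order asymptotic EP in this setting. Besides the case $\lambda_i\equiv\lambda_j\pmod{\mathrm{i}\Omega}$ within the upper block, cross-block coincidences $\lambda_i\equiv -\lambda_j\pmod{\mathrm{i}\Omega}$ or the self-coincidence $\lambda_i\equiv-\lambda_i\pmod{\mathrm{i}\Omega}$ must be addressed. Here one must exploit the rigid sign/block structure of \eqref{f38}, in which the top-right and bottom-left blocks differ from the top-left by only a sign and a common factor $\bm{M}(t)=\bm{\Phi}^{-1}\bm{S}^{-1}\bm{B}_1(t)\bm{S}$; this forces the two relevant off-diagonal Fourier coefficients to vanish or not vanish simultaneously in those cross-block cases, which prevents the "exactly one vanishes" criterion from being met and shows that no \emph{new} first-order asymptotic EP arises. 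Carrying out this block-wise case analysis cleanly, and in particular treating the self-coincidence $2\lambda_i\in\mathrm{i}\Omega\mathbb{Z}$ separately, is the delicate part of the argument.
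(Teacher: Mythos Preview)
The paper does not supply a separate proof of this theorem; the statement is presented as an immediate application of the general criterion quoted just above it (from \cite{MR4389752}) to system \eqref{f43}, after the similarity transformation by $\bm{V}$ and the block computation \eqref{f38}. Your proposal follows exactly this route and fills in details the paper leaves implicit, so in approach it matches the paper.

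There is, however, a genuine gap in your resolution of the cross-block coalescences. For a pair $(\lambda_i,-\lambda_j)$ with $i\neq j$, the $(i,dN{+}j)$ and $(dN{+}j,i)$ entries of $\widetilde{\bm A}_1$ from \eqref{f38} are $\tfrac12 M_{ij}$ and $-\tfrac12 M_{ji}$ with $M=\bm{\Phi}^{-1}\bm{S}^{-1}\bm{B}_1\bm{S}$, the same matrix functions as in the upper-left block up to sign; but the criterion samples them at Fourier modes $\pm(n_i-n_{dN+j})$, not $\pm(n_i-n_j)$. The two vanishing conditions are therefore \emph{not} linked by the block structure, and an EP can arise in this cross-block case independently of condition (2) as stated. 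The self-pair $(\lambda_i,-\lambda_i)$ behaves similarly: both entries equal $\pm\tfrac12 M_{ii}$, but they are evaluated at opposite Fourier modes $m$ and $-m$, and $(M^{(m)})_{ii}=0$ does not force $(M^{(-m)})_{ii}=0$ in general. Thus your claim that the sign/block structure ``forces the two relevant off-diagonal Fourier coefficients to vanish or not vanish simultaneously'' is not justified. The paper does not address these cases either; a fully rigorous version of the ``only if'' direction should either enlarge condition (1) to all sign pairings $\pm\lambda_i\equiv\pm\lambda_j\pmod{\mathrm{i}\Omega}$ with condition (2) read at the corresponding folding numbers, or add hypotheses that exclude such cross-block coalescences.
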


To apply Theorem~\ref{th3} to the elastic wave equation, we consider a three-dimensional configuration with a single resonator. Defining
\[
c_{s's} := \frac{1}{\rho_{s'}} C_{11}^{\alpha, s, s'} \quad (s, s' = 1,2,3),
\]
where $c_{ss}$ is real-valued for each $s$, we obtain from \eqref{f29}:
\begin{align}\label{f30}
	\frac{{\rm d} \bm{y}(t)}{{\rm d}t} = \bm{A}_\eta(t) \bm{y}(t), \quad \bm{A}_\eta(t) = \bm{A}_0 + \eta \bm{A}_1(t),
\end{align}
with
\begin{align*}
&\bm{A}_0 = \begin{pmatrix}
		\bm{0} & \bm{I}_{3} \\
		\bm{B}_0 & \bm{0}
	\end{pmatrix}, \quad
\bm{A}_1(t) = \begin{pmatrix}
		\bm{0} & \bm{0} \\
		\bm{B}_1(t) & \bm{0}
	\end{pmatrix}, \\
&\bm{B}_0 = \frac{\varepsilon}{|D|}
\begin{pmatrix}
	c_{11} & c_{12} & c_{13} \\
	\overline{c_{12}} & c_{22} & c_{23} \\
	\overline{c_{13}} & \overline{c_{23}} & c_{33}
\end{pmatrix}, \quad
\bm{B}_1(t) = \frac{\varepsilon}{|D|}
\begin{pmatrix}
	\xi_1(t) c_{11} & \xi_1(t) c_{12} & \xi_1(t) c_{13} \\
	\xi_2(t) \overline{c_{12}} & \xi_2(t) c_{22} & \xi_2(t) c_{23} \\
	\xi_3(t) \overline{c_{13}} & \xi_3(t) \overline{c_{23}} & \xi_3(t) c_{33}
\end{pmatrix}.
\end{align*}

The characteristic polynomial of $\bm{B}_0$ is given by:
\begin{equation*}
\begin{aligned}
\det(\bm{B}_0 - \lambda^2 \bm{I}_3)
&= -(\lambda^2)^3 + \frac{\varepsilon}{|D|}(c_{11} + c_{22} + c_{33})(\lambda^2)^2 \\
&\quad + \frac{\varepsilon^2}{|D|^2} \left( -c_{11} c_{22} - c_{11} c_{33} - c_{22} c_{33} + |c_{12}|^2 + |c_{13}|^2 + |c_{23}|^2 \right) \lambda^2 \\
&\quad + \frac{\varepsilon^3}{|D|^3} \big( c_{11} c_{22} c_{33} - c_{11}|c_{23}|^2 - c_{22} |c_{13}|^2 - c_{33}|c_{12}|^2 \\
&\qquad + c_{12} \overline{c_{13}} c_{23} + \overline{c_{12}} c_{13} \overline{c_{23}} \big).
\end{aligned}
\end{equation*}

Denoting the discriminant of this cubic polynomial by $\Delta_3$, we assume $\Delta_3 \neq 0$ (ensuring three distinct eigenvalues), $c_{12} \neq 0$, and that for each eigenvalue $\lambda_i^2$,
\begin{align}\label{f52}
\left( \frac{\varepsilon}{|D|} c_{11} - \lambda_i^2 ,\, \frac{\varepsilon}{|D|} c_{12} ,\, \frac{\varepsilon}{|D|} c_{13} \right) \not\parallel \left( \frac{\varepsilon}{|D|} \overline{c_{12}} ,\, \frac{\varepsilon}{|D|} c_{22} - \lambda_i^2 ,\, \frac{\varepsilon}{|D|} c_{23} \right),
\end{align}
where $\not\parallel$ denotes non-proportionality. The eigenvector corresponding to $\lambda_i^2$ is then 
\begin{align*}
 \bm{ \vartheta}_i = \begin{pmatrix}
	\dfrac{\varepsilon}{|D|^2} \left( -c_{13} c_{22} \varepsilon + c_{12} c_{23} \varepsilon + c_{13} |D| \lambda_i^2 \right) \\[10pt]
	\dfrac{\varepsilon}{|D|^2} \left( -c_{11} c_{23} \varepsilon + c_{23} |D| \lambda_i^2 +\overline{c_{12}} c_{13}  \varepsilon \right) \\[10pt]
	\dfrac{1}{|D|^2} \left( (c_{11} \varepsilon - |D| \lambda_i^2)(c_{22} \varepsilon - |D| \lambda_i^2) - |c_{12}|^2 \varepsilon^2 \right)
\end{pmatrix}.
\end{align*}
The eigenmatrix $\bm{S} =\begin{pmatrix}
	\bm{\vartheta}_1, \bm{\vartheta}_2, \bm{\vartheta}_3 \end{pmatrix}$ consequently satisfies
\begin{align}\label{f53}
\det(\bm{S}) = -\frac{\varepsilon^3 }{|D|^3} (\lambda_1^2 - \lambda_2^2) (\lambda_1^2 - \lambda_3^2) (\lambda_2^2 - \lambda_3^2) \left(c_{12} c_{23}^2 - \overline{c_{12}} c_{13}^2  \right) \neq 0.
\end{align}

Next, we construct specific examples of first-order asymptotic EPs based on the preceding analysis. Due to the high dimensionality of the parameter space, we fix select parameters judiciously. Following Theorem~\ref{th3}, we simplify by setting
\begin{align}\label{f51}
\xi_2(t) = \xi_3(t) = 0, \quad c_{11} = c_{22} = c_{33},
\end{align}
and fix $i=1$, $j=2$ as specified in the theorem. This yields the matrix elements:
\begin{align*}
(\bm{S}^{-1}\bm{B}_1\bm{S})_{12}(t) &= \frac{
  \xi_{1}(t) \lambda_2^2 f(\lambda_2^2) g(\lambda_1^2) }{
  |D|^2 \varepsilon (\lambda_1^2 - \lambda_2^2) (\lambda_1^2 - \lambda_3^2) 
  \left( c_{12} c_{23}^2 - \overline{c_{12}} c_{13}^2 \right)
}, \\
(\bm{S}^{-1}\bm{B}_1\bm{S})_{21}(t) &= \frac{
  \xi_{1}(t) \lambda_1^2 f(\lambda_1^2) g(\lambda_2^2) }{
  |D|^2 \varepsilon (\lambda_1^2 - \lambda_2^2) (\lambda_2^2 - \lambda_3^2) 
  \left( c_{12} c_{23}^2 - \overline{c_{12}} c_{13}^2 \right)
},
\end{align*}
where
\begin{align*}
f(\lambda) &= c_{13} |D| \lambda + \varepsilon (c_{12} c_{23} - c_{11} c_{13}), \\
g(\lambda) &= c_{23} |D|^2 \lambda^2 +\left( - \overline{c_{12}} c_{13} - 2c_{11} c_{23} \right) |D|\varepsilon \lambda  \\
           &\quad + \varepsilon^2 \left( c_{11}^2 c_{23} + c_{11} \overline{c_{12}} c_{13} - c_{23} (|c_{13}|^2 + |c_{23}|^2) \right).
\end{align*}

Under the above parameter constraints, we derive:
\begin{align}
\det(\bm{B}_0) &= \frac{\varepsilon^3}{|D|^3} \left( c_{11}^3 - c_{11} (|c_{12}|^2 + |c_{13}|^2 + |c_{23}|^2) + c_{12}\overline{c_{13}} c_{23}  + \overline{c_{12}} c_{13} \overline{c_{23}} \right) \neq 0, \label{f54} \\
\begin{split}
\det(\bm{B}_0 - \lambda^2 \bm{I}_3) &= -(\lambda^2)^3 + \frac{3c_{11}\varepsilon}{|D|}(\lambda^2)^2 \\
&\quad + \frac{\varepsilon^2}{|D|^2} \left( -3c_{11}^2 + |c_{12}|^2 + |c_{13}|^2 + |c_{23}|^2 \right) \lambda^2 \\
&\quad + \frac{\varepsilon^3}{|D|^3} \big( c_{11}^3 - c_{11} (|c_{12}|^2 + |c_{13}|^2 + |c_{23}|^2) \\
&\qquad + c_{12} \overline{c_{13}} c_{23} + \overline{c_{12}} c_{13} \overline{c_{23}} \big), \label{f39}
\end{split}
\end{align}
with the discriminant of \eqref{f39} assumed negative:
\begin{align} 
\Delta_3 &= \frac{\varepsilon^6}{|D|^6} \left( \frac{|c_{12} \overline{c_{13}} c_{23} + \overline{c_{12}} c_{13} \overline{c_{23}}|^2}{4} - \frac{(|c_{12}|^2 + |c_{13}|^2 + |c_{23}|^2)^3}{27} \right) < 0. \label{f42}
\end{align}
Vieta's relations yield:
\begin{align}
\lambda_1^2 + \lambda_2^2 + \lambda_3^2 &= \frac{3c_{11}\varepsilon}{|D|}, \label{f40} \\
\lambda_1^2 \lambda_2^2 \lambda_3^2 &= \frac{\varepsilon^3}{|D|^3} \left( c_{11}^3 - c_{11}(|c_{12}|^2 + |c_{13}|^2 + |c_{23}|^2) + c_{12} \overline{c_{13}} c_{23} + \overline{c_{12}} c_{13} \overline{c_{23}} \right). \label{f41}
\end{align}
Thus, knowledge of one eigenvalue determines the others through \eqref{f40} and \eqref{f41}. 

To satisfy condition (2) of Theorem~\ref{th3} and leveraging the symmetry in $(\bm{S}^{-1}\bm{B}_1\bm{S})_{12}(t)$ and $(\bm{S}^{-1}\bm{B}_1\bm{S})_{21}(t)$, it suffices to consider
\begin{align*}
\left( \bm{S}^{-1}\bm{B}_1\bm{S} \right)^{(n_1-n_2)}_{12} = 0 \quad \text{while} \quad \left( \bm{S}^{-1}\bm{B}_1\bm{S} \right)^{(n_2-n_1)}_{21} \neq 0.
\end{align*}
The condition $\left( \bm{S}^{-1}\bm{B}_1\bm{S} \right)^{(n_1-n_2)}_{12} = 0$ reduces to two cases:
\begin{align*}
f(\lambda_2^2) = 0 \quad \text{or} \quad g(\lambda_1^2) = 0.
\end{align*}

Building upon the preceding theoretical framework, we now develop a constructive method for first-order asymptotic EPs in system~\eqref{f30}. The procedure bifurcates into two distinct cases:

\noindent
\textbf{Case 1: Construction via the equation $f(x) = 0$}
\begin{enumerate}[label=(\arabic*), leftmargin=*]
    \item \textbf{Eigenvalue selection}
    \begin{itemize}
        \item Solve $f(x) = 0$ and select a non-zero root $x$ satisfying $g(x) \neq 0$
        \item Designate $\lambda_2^2 = x$ as an eigenvalue of $\bm{B}_0$
        \item Note: $\bm{B}_0$ possesses three non-zero eigenvalues $\{\lambda_k^2\}_{k=1}^3$ consisting of one real and two complex-conjugate values
    \end{itemize}
    
    \item \textbf{Remaining eigenvalue determination}
    \begin{itemize}
        \item Compute the other eigenvalues $y$ and $z$ using Vieta's relations \eqref{f40} and \eqref{f41} with $\lambda_2^2 = x$
        \item Assign $\lambda_1^2$ to either $y$ or $z$
    \end{itemize}
    
    \item \textbf{Modulation constraints}
    \begin{itemize}
        \item Apply Theorem~\ref{th3}(1) to $\lambda_1^2$ and $\lambda_2^2$ to establish the relationship between modulation frequency $\Omega$ and system parameters
        \item Enforce Condition (2) of Theorem~\ref{th3}: require non-vanishing of the $(n_2 - n_1)$-th Fourier coefficient of $\xi_1(t)$
    \end{itemize}
\end{enumerate}
\textit{Implementation guidance:} See Examples 1--2 for detailed demonstrations of root selection and eigenvalue assignment procedures.

\noindent
\textbf{Case 2: Construction via the equation $g(x) = 0$}
\begin{enumerate}[label=(\arabic*), leftmargin=*]
    \item \textbf{Root analysis and eigenvalue selection}
    \begin{itemize}
        \item Solve $g(x) = 0$ to obtain roots $\{x_1, x_2\}$
        \item Branch based on root configuration:
        \begin{itemize}
            \item \textit{Complex conjugate eigenvalues}: Set $\lambda_1^2 = x_1$ (without loss of generality) and proceed to step (2)
            
            \item \textit{Non-conjugate eigenvalues}: For each distinct root $x_i \in \{x_1, x_2\}$, set $\lambda_1^2 = x_i$ and execute Case 1 steps (1)--(2)
        \end{itemize}
    \end{itemize}
    
    \item \textbf{Eigenvalue assignment for conjugate case}
    \begin{itemize}
        \item Compute remaining eigenvalues $y, z$ using Vieta's relations \eqref{f40} and \eqref{f41} with $\lambda_1^2 = x_1$
        \item Assign $\lambda_2^2$ to the real eigenvalue in $\{y, z\}$, excluding $x_2$ to ensure $g(\lambda_2^2) \neq 0$
        \item \textit{Rationale}: Avoids conflict with $x_2 = \overline{x_1}$ while satisfying the non-degeneracy condition
    \end{itemize}
    
    \item \textbf{Modulation constraints}
    \begin{itemize}
        \item Apply Theorem~\ref{th3}(1) to $\lambda_1^2$ and $\lambda_2^2$ to establish the $\Omega$-parameter relationship
        \item Enforce Condition (2) of Theorem~\ref{th3}: Require non-vanishing $(n_2 - n_1)$-th Fourier coefficient of $\xi_1(t)$
    \end{itemize}
\end{enumerate}
\textit{Implementation guidance:} Examples 3--4 provide case-specific procedures and eigenvalue selection criteria.

Leveraging the developed methodology, we present four concrete examples of first-order asymptotic EP constructions for system \eqref{f29}. The following examples utilize the standard formula for computing square roots of complex numbers with non-zero real and imaginary components:

\begin{rema}
For a complex number $A + \mathrm{i}B$ with $A \neq 0$ and $B \neq 0$, its square roots are given by
\begin{align*}
    \pm \left( \sqrt{ \dfrac{ \sqrt{A^2 + B^2} + A }{2} } + \mathrm{i} \cdot \sgn(B) \sqrt{ \dfrac{ \sqrt{A^2 + B^2} - A }{2} } \right).
\end{align*}
\end{rema}

Building on the preceding analysis, we assume all parameters satisfy conditions \eqref{f52}, \eqref{f53}, \eqref{f51}, \eqref{f54}, and \eqref{f42}. We now detail the EP construction procedure.

First, consider the case $f(\lambda_2^2) = 0$. Following Case~1-(1), this yields:
\begin{align*}
\lambda_2^2 = \frac{\varepsilon (c_{11}c_{13} - c_{12}c_{23})}{c_{13} |D|} \neq 0, \quad c_{13} \neq 0,
\end{align*}
which must simultaneously satisfy the characteristic equation:
\begin{align*}
\det(\bm{B}_0 - \lambda_2^2 \bm{I}_3) = \frac{\varepsilon^3}{c_{13}^3 |D|^3} \left( c_{12}^3 c_{23}^3 - c_{12} c_{13}^2 c_{23} ( |c_{12}|^2 + |c_{23}|^2 ) + \overline{c_{12}} c_{13}^4 \overline{c_{23}} \right) = 0.
\end{align*}

Since $\lambda_1^2 \neq \lambda_2^2$, we have $f(\lambda_1^2) \neq 0$. To ensure non-vanishing of the perturbation matrix element
\[
\left( (\bm{S}^{-1}\bm{B}_1(t)\bm{S})^{(n_2-n_1)} \right)_{21} \neq 0,
\]
we impose the dual conditions:
\begin{align*}
\xi_1^{(n_2-n_1)} \neq 0 \quad \text{and} \quad g(\lambda_2^2) \neq 0,
\end{align*}
where the latter expands to:
\begin{align*}
c_{23} \varepsilon^2 \left( c_{12}^2 c_{23}^2 + c_{13}^2 (|c_{12}|^2 - |c_{13}|^2 - |c_{23}|^2) \right) \neq 0.
\end{align*}
The remaining eigenvalues of $\bm{B}_0$ are then computed using discriminant condition \eqref{f42} and Vieta's relations \eqref{f40}, \eqref{f41}.

We now address Case 1-(2) and Case 1-(3) through two distinct examples categorized by the nature of $\lambda_2^2$:

\begin{enumerate}[leftmargin=*, label=\textbf{Example \arabic*}:]
    \item \textbf{Real eigenvalue configuration} ($\lambda_2^2 \in \mathbb{R}$ and $\displaystyle\frac{c_{11}c_{13} - c_{12}c_{23}}{c_{13}} > 0$):
    \begin{align*}
        \lambda_2^2 &= \frac{\varepsilon (c_{11}c_{13} - c_{12}c_{23})}{|D| c_{13}} \\
        \lambda_{2\pm} &= \pm \sqrt{ \lambda_2^2 }
    \end{align*}
    The remaining eigenvalues form a complex-conjugate pair:
    \begin{align*}
        \alpha^2 &= a + \mathrm{i} b, \quad 
        \beta^2 = a - \mathrm{i} b \quad (b > 0)
    \end{align*}
    with components:
    \begin{align*}
        a &= \dfrac{\varepsilon}{|D|} \cdot \dfrac{2c_{11}c_{13} + c_{12}c_{23}}{2c_{13}}, \\
        a^2 + b^2 &= \dfrac{\varepsilon^2}{|D|^2} \cdot \dfrac{ c_{13} \left( c_{11}^3 - c_{11} (|c_{12}|^2 + |c_{13}|^2 + |c_{23}|^2) + c_{12} \overline{c_{13}} c_{23} + \overline{c_{12}} c_{13} \overline{c_{23}} \right) }{c_{11} c_{13} - c_{12} c_{23}}.
    \end{align*}
    Their square roots are:
    \begin{align*}
        \alpha_{\pm} &= \pm \left( \sqrt{ \dfrac{ \sqrt{a^2 + b^2} + a }{2} } + \mathrm{i} \sqrt{ \dfrac{ \sqrt{a^2 + b^2} - a }{2} } \right), \\
        \beta_{\pm} &= \pm \left( \sqrt{ \dfrac{ \sqrt{a^2 + b^2} + a }{2} } - \mathrm{i} \sqrt{ \dfrac{ \sqrt{a^2 + b^2} - a }{2} } \right).
    \end{align*}
   By Theorem~\ref{th3}, the congruence conditions 
$\lambda_{2+} \equiv \lambda_1 \pmod{\mathrm{i}\Omega}$ for $\lambda_1 \in \{ \alpha_{+}, \beta_{+} \}$ 
or $\lambda_{2-} \equiv \lambda_1 \pmod{\mathrm{i}\Omega}$ for $\lambda_1 \in \{ \alpha_{-}, \beta_{-} \}$ 
both necessitate identical real parts. This requirement yields:
\begin{align*}
\dfrac{(2 c_{11} c_{13} - 5 c_{12} c_{23})^2}{4 c_{13}} = \dfrac{ c_{11}^3 - c_{11} (|c_{12}|^2 + |c_{13}|^2 + |c_{23}|^2) + c_{12} \overline{c_{13}} c_{23} + \overline{c_{12}} c_{13} \overline{c_{23}} }{c_{11} c_{13} - c_{12} c_{23}}.
\end{align*}
    The modulation frequency and Fourier constraints are:
    \begin{align*}
        \Omega &= \dfrac{1}{n} \sqrt{ \dfrac{ \sqrt{a^2 + b^2} - a }{2} }, \quad n \in \mathbb{N} \setminus \{0\}, \\
        \xi_1^{(-n)} \neq 0 \quad &\text{if} \quad \lambda_1 \in \{ \alpha_{+}, \beta_{-} \}, \\
        \xi_1^{(n)} \neq 0 \quad &\text{if} \quad \lambda_1 \in \{ \alpha_{-}, \beta_{+} \}.
    \end{align*}

    \item \textbf{Complex eigenvalue configuration} ($\lambda_2^2  \in \mathbb{C}$):
    The eigenvalues comprise $\lambda_2^2$, its conjugate $\tau^2$, and real $\gamma^2$:
    \begin{align*}
        \tau^2 &= \frac{\varepsilon(c_{11}\overline{c_{13}} - \overline{c_{12}}\overline{c_{23}})}{|D|\overline{c_{13}}}, \\
        \gamma^2 &= \frac{\varepsilon(c_{11}|c_{13}|^2 + c_{12}\overline{c_{13}}c_{23} + \overline{c_{12}}c_{13}\overline{c_{23}})}{|D||c_{13}|^2}.
    \end{align*}
    The real part and the squared modulus of $\lambda_2^2$ satisfy:
    \begin{align*}
        \Re(\lambda_2^2) &= \Re(\tau^2) = \dfrac{\varepsilon(2c_{11}|c_{13}|^2 - c_{12}\overline{c_{13}}c_{23} - \overline{c_{12}}c_{13}\overline{c_{23}})}{2|D||c_{13}|^2}, \\
        |\lambda_2^2|^2 &=(\Re(\lambda_2^2))^2+(\Im(\lambda_2^2))^2= \dfrac{\varepsilon^2|c_{11}c_{13} - c_{12}c_{23}|^2}{|D|^2|c_{13}|^2}.
    \end{align*}
    Square roots are:
    \begin{align*}
        \lambda_{2\pm} &= \pm \left( \sqrt{ \dfrac{ |\lambda_2^2| + \Re(\lambda_2^2) }{2} } + \mathrm{i} \cdot \sgn(\Im(\lambda_2^2)) \sqrt{ \dfrac{ |\lambda_2^2| - \Re(\lambda_2^2) }{2} } \right), \\
        \tau_{\pm} &= \pm \left( \sqrt{ \dfrac{ |\lambda_2^2| + \Re(\lambda_2^2) }{2} } - \mathrm{i} \cdot \sgn(\Im(\lambda_2^2)) \sqrt{ \dfrac{ |\lambda_2^2| - \Re(\lambda_2^2) }{2} } \right).
    \end{align*}
    
    \begin{enumerate}[label=(\arabic*), leftmargin=*]
    \item For $\lambda_{2+} \equiv \tau_{+} \pmod{\mathrm{i}\Omega}$ or $\lambda_{2-} \equiv \tau_{-} \pmod{\mathrm{i}\Omega}$, the modulation frequency satisfies
    \begin{align*}
        \Omega = \frac{1}{n} \sqrt{ \frac{ |\lambda_2^2| - \Re(\lambda_2^2) }{2} }, \quad n \in \mathbb{N} \setminus \{0\},
    \end{align*}
    with corresponding Fourier coefficient constraints:
    \begin{align*}
        \xi_1^{(2n \cdot \sgn(\Im(\lambda_2^2)))} \neq 0 \quad &\text{if} \quad \lambda_1 = \tau_{+}, \\
        \xi_1^{(-2n \cdot \sgn(\Im(\lambda_2^2)))} \neq 0 \quad &\text{if} \quad \lambda_1 = \tau_{-}.
    \end{align*}
    
    \item When $c_{11}|c_{13}|^2 + c_{12}\overline{c_{13}}c_{23} + \overline{c_{12}}c_{13}\overline{c_{23}} > 0$, 
    \begin{align*}
        \gamma_{\pm} = \pm \sqrt{\dfrac{\varepsilon}{|D|}} \cdot \dfrac{\sqrt{c_{11}|c_{13}|^2 + c_{12}\overline{c_{13}}c_{23} + \overline{c_{12}}c_{13}\overline{c_{23}}}}{|c_{13}|}.
    \end{align*}
    For $\lambda_{2+} \equiv \gamma_{+} \pmod{\mathrm{i}\Omega}$ or $\lambda_{2-} \equiv \gamma_{-} \pmod{\mathrm{i}\Omega}$, the real-part coincidence condition implies
    \begin{align*}
        2|c_{13}| \cdot |c_{11} c_{13} - c_{12} c_{23}| = 2c_{11}|c_{13}|^2 + 5 \left( c_{12} \overline{c_{13}} c_{23} + \overline{c_{12}} c_{13} \overline{c_{23}} \right),
    \end{align*}
    with modulation frequency
    \begin{align*}
        \Omega = \frac{1}{n} \sqrt{ \frac{ |\lambda_2^2| - \Re(\lambda_2^2) }{2} }, \quad n \in \mathbb{N} \setminus \{0\},
    \end{align*}
    and constraints:
    \begin{align*}
        \xi_1^{(n \cdot \sgn(\Im(\lambda_2^2)))} \neq 0 \quad &\text{if} \quad \lambda_1 = \gamma_{+}, \\
        \xi_1^{(-n \cdot \sgn(\Im(\lambda_2^2)))} \neq 0 \quad &\text{if} \quad \lambda_1 = \gamma_{-}.
    \end{align*}
\end{enumerate}

\end{enumerate}

\begin{rema}
The preceding examples exclude two cases due to inherent contradictions:
\begin{enumerate}[label=(\arabic*), leftmargin=*]
    \item For real $\lambda_2^2 = \dfrac{\varepsilon (c_{11}c_{13} - c_{12}c_{23})}{|D| c_{13}}$ with $\dfrac{c_{11}c_{13} - c_{12}c_{23}}{c_{13}} < 0$:
    \begin{align*}
        \lambda_{2\pm} = \pm \mathrm{i} \sqrt{ -\dfrac{\varepsilon (c_{11}c_{13} - c_{12}c_{23})}{|D| c_{13}} }.
    \end{align*}
    The remaining eigenvalues $\alpha^2$, $\beta^2$ form a complex-conjugate pair as in Example 1. The congruence conditions
    \begin{align*}
        \lambda_{2+} \equiv \lambda_1 \pmod{\mathrm{i}\Omega} \quad &(\lambda_1 \in \{\alpha_{+}, \beta_{+}\}) \\
        \text{or} \quad \lambda_{2-} \equiv \lambda_1 \pmod{\mathrm{i}\Omega} \quad &(\lambda_1 \in \{\alpha_{-}, \beta_{-}\})
    \end{align*}
    require $\Im(\alpha^2) = \Im(\beta^2) = 0$, contradicting the conjugate-pair property $\Im(\alpha^2) = -\Im(\beta^2) \neq 0$.
    
    \item For complex $\lambda_2^2 = \dfrac{\varepsilon (c_{11}c_{13} - c_{12}c_{23})}{|D| c_{13}}$ with $c_{11}|c_{13}|^2 + c_{12}\overline{c_{13}}c_{23} + \overline{c_{12}}c_{13}\overline{c_{23}} < 0$:
    \begin{align*}
        \gamma_{\pm} = \pm \mathrm{i} \sqrt{\dfrac{\varepsilon}{|D|}} \dfrac{\sqrt{ -c_{11}|c_{13}|^2 - c_{12}\overline{c_{13}}c_{23} - \overline{c_{12}}c_{13}\overline{c_{23}} }}{|c_{13}|}.
    \end{align*}
    The conditions $\lambda_{2\pm} \equiv \gamma_{\pm} \pmod{\mathrm{i}\Omega}$ require $\Re(\lambda_2^2) = \Re(\gamma_{\pm}) = 0$, contradicting the initial complexity assumption $\Im(\lambda_2^2) \neq 0$.
\end{enumerate}
\end{rema}

Based on Case 2, we impose the conditions:
\begin{align*}
\left( \left( \bm{S}^{-1} \bm{B}_1 \bm{S} \right)^{(n_1 - n_2)} \right)_{12} = 0
\quad \text{and} \quad
\left( \left( \bm{S}^{-1} \bm{B}_1 \bm{S} \right)^{(n_2 - n_1)} \right)_{21} \neq 0,
\end{align*}
which lead to \( g(\lambda_1^2) = 0 \).

When \( c_{23} = 0 \), solving yields \( \lambda_1^2 = \displaystyle\frac{\varepsilon}{|D|}c_{11} \) and \( f(\lambda_1^2) = 0 \); this case is excluded from further consideration.  

Assuming \( c_{23} \neq 0 \), the equation \( g(\lambda_1^2) = 0 \) has two roots (possibly coincident):
\begin{align*}
\lambda_{1\pm}^2 = \frac{\varepsilon}{|D|}\left(
    \frac{\overline{c_{12}}c_{13} + 2c_{11}c_{23}}{2c_{23}} 
    \pm \frac{\zeta}{2c_{23}}
\right),
\end{align*}
where \( \zeta = \sqrt{\overline{c_{12}}^2c_{13}^2 + 4|c_{13}|^2 c_{23}^2 + 4|c_{23}|^2 c_{23}^2} \). These satisfy \( \xi^{(n_2-n_1)} \neq 0 \), and
\begin{align*}
\det(\bm{B}_0 - \lambda_{1\pm}^2 \bm{I}_3) 
&= \frac{\varepsilon^3}{2 c_{23}^3 |D|^3} \Biggl( 
    -2  \overline{c_{12}}c_{13} c_{23}^2 \left( |c_{13}|^2 + |c_{23}|^2 \right) 
    -\overline{c_{12}}^3 c_{13}^3 \\
    &\quad +  |c_{12}|^2 \overline{c_{12}}c_{13} c_{23}^2 
    + 2 c_{12} \overline{c_{13}} c_{23}^4   
    + 2\overline{c_{12}} c_{13} c_{23}^3 \overline{c_{23}} \\
    &\quad \pm \left(|c_{12}|^2 c_{23}^2 -\overline{c_{12}}^2c_{13}^2 \right) \zeta 
\Biggr) \\
&= 0,
\end{align*}
\begin{align*}
f(\lambda_{1\pm}^2) = \frac{\varepsilon}{2 c_{23}} \left(2 c_{12} c_{23}^2 + \overline{c_{12}} c_{13}^2 \pm c_{13} \zeta\right) \neq 0.
\end{align*}

\noindent\textbf{Note on $\pm$ convention:}
\begin{itemize}
    \item $\lambda_{1+}^2$ consistently uses the upper sign ($+$) in all expressions
    \item $\lambda_{1-}^2$ consistently uses the lower sign ($-$) in all expressions
\end{itemize}
The equations apply \textit{separately} to each root; $\lambda_{1+}^2$ and $\lambda_{1-}^2$ represent distinct solution branches.

Building on the preceding analysis, we address Cases 2-(2) and 2-(3) through two distinct scenarios based on the nature of $\lambda_{1\pm}^2$:

\begin{enumerate}[label=\textbf{Example \arabic*}:, start=3]
\item \textbf{Complex conjugate roots case:} 
When $\lambda_{1+}^2$ and $\lambda_{1-}^2$ form a complex conjugate pair, the parameters must satisfy:
\begin{enumerate}[label=(\arabic*)]
    \item \textit{Equal real parts:} $\Re(\lambda_{1+}^2) = \Re(\lambda_{1-}^2)$ requires
        \begin{equation*}
            \Re(c_{23}) \cdot \Re(\zeta) = \Im(c_{23}) \cdot \Im(\zeta)
        \end{equation*}
    \item \textit{Opposite imaginary parts:} $\Im(\lambda_{1+}^2) = -\Im(\lambda_{1-}^2)$ requires
        \begin{equation*}
           \Im\left( \frac{\overline{c_{12}}c_{13} + 2c_{11}c_{23}}{2c_{23}} \right) = 0
        \end{equation*}
    \item \textit{Non-vanishing imaginary parts:} $\Im(\lambda_{1\pm}^2) \neq 0$ requires
        \begin{equation*}
            \Re(c_{23}) \cdot \Im(\zeta) + \Im(c_{23}) \cdot \Re(\zeta) \neq 0
        \end{equation*}
\end{enumerate}
When either $\lambda_{1+}^{2}$ or $\lambda_{1-}^{2}$ is assigned as an eigenvalue of $\bm{B}_0$, the real eigenvalue $\lambda_{2}^{2}$ must be selected as the complementary eigenvalue. For derivation details, see Example~2-(2).

\item \textbf{Non-conjugate roots case:} 
When the conditions of Example~3 are unsatisfied, $\lambda_{1+}^2$ and $\lambda_{1-}^2$ are not complex conjugates. The analysis proceeds by eigenvalue selection:
\begin{enumerate}[label=(\arabic*)]
  \item For $\lambda_{1+}^2$ as eigenvalue of $\bm{B}_0$:
  \begin{itemize}
    \item Real case: See Example~1
    \item Complex case: See Example~2
  \end{itemize}
  \item For $\lambda_{1-}^2$ as eigenvalue of $\bm{B}_0$:
  \begin{itemize}
    \item Real case: See Example~1
    \item Complex case: See Example~2
  \end{itemize}
\end{enumerate}
\end{enumerate}

\begin{rema}
For a single resonator in two dimensions, Theorem~\ref{th3} cannot construct asymptotic EPs. 
The fundamental obstruction lies in conflicting requirements for the unperturbed system's coefficient matrix $\bm{A}_0$:

\begin{enumerate}[label=(\roman*)]
    \item \textit{Diagonalizability requirement:}  Theorem~\ref{th3} necessitates that $\bm{A}_0$ be diagonalizable. This requires $\det(\bm{B}_0) \neq 0$ for its block submatrix $\bm{B}_0$.
    \item \textit{EP construction requirement:} The asymptotic EP construction in Theorem~\ref{th3}(2) explicitly requires $\det(\bm{B}_0) = 0$ in two dimensions.
\end{enumerate}

These conditions are mutually exclusive in $\mathbb{R}^2$. Consequently, no parameter configuration satisfies both requirements simultaneously, making Theorem~\ref{th3} inapplicable for EP generation in 2D resonators. 
\end{rema}
Detailed derivations appear in Appendix~\ref{app1}.

\appendix  
\section{The Two-Dimensional Case of a Single Resonator}\label{app1}

We consider a single resonator in two dimensions. Denoting $\displaystyle\frac{1}{\rho_{s'}}C_{11}^{\alpha,s,s'}$ as $c_{s's}$ for $s,s' = 1,2$, where $c_{ss}$ are real-valued (by \eqref{f29}), we obtain the system:
\begin{align}\label{f44}
	\frac{{\rm d} \bm{y}(t)}{{\rm d}t} = \left( \bm{A}_0 + \eta \bm{A}_1(t) \right) \bm{y}(t),
\end{align}
with coefficient matrices:
\begin{align*}
\bm{A}_0 &= \begin{pmatrix}
		\bm{0} & \bm{I}_{2} \\
		\bm{B}_0 & \bm{0}
	\end{pmatrix}, 
\quad
\bm{A}_1(t) = \begin{pmatrix}
		\bm{0} & \bm{0} \\
		\bm{B}_1(t) & \bm{0}
	\end{pmatrix}, \\[2ex]
\bm{B}_0 &= \frac{\varepsilon}{|D|}
\begin{pmatrix}
	c_{11} & c_{12} \\
	\overline{c_{12}} & c_{22}
\end{pmatrix}, 
\\
\bm{B}_1(t) &= \frac{\varepsilon}{|D|}
\begin{pmatrix}
	c_{11}\xi_1(t) & c_{12}\xi_1(t) \\
	\overline{c_{12}}\,\xi_2(t) & c_{22}\xi_2(t)
\end{pmatrix}.
\end{align*}

Since the characteristic polynomial of $\bm{B}_0$ is
\[
\det( \bm{B}_0 - \lambda^2 \bm{I}_2 ) = (\lambda^2)^2 - \operatorname{tr}(\bm{B}_0) \lambda^2 + \det(\bm{B}_0),
\]
its discriminant is given by
\[
\Delta = \big(\operatorname{tr}(\bm{B}_0)\big)^2 - 4\det(\bm{B}_0) = \left(\frac{\varepsilon}{|D|}\right)^2 \left[ (c_{11}-c_{22})^2 + 4|c_{12}|^2 \right].
\]

\begin{itemize}
    \item For $\Delta \neq 0$, $\bm{B}_0$ has two distinct eigenvalues and is diagonalizable.
    
    \item For $\Delta = 0$,
    \[
    (c_{11}-c_{22})^2 + 4|c_{12}|^2 = 0 \implies c_{12} = 0 \quad \text{and} \quad c_{11} = c_{22},
    \]
    yielding
    \[
    \bm{B}_0 = \frac{\varepsilon}{|D|} c_{11} \bm{I}_2,
    \]
    which is diagonal (hence diagonalizable).
\end{itemize}

Thus $\bm{B}_0$ is always diagonalizable. However, Assumption~\ref{assump:system_f43} additionally requires $\det(\bm{B}_0) \neq 0$ to ensure the geometric multiplicity of the zero eigenvalue of $\bm{A}_0$ matches its algebraic multiplicity.

Assuming $c_{11}c_{22} - |c_{12}|^2 \neq 0$, we examine two mutually exclusive cases:
\begin{enumerate}[label=(\arabic*)]
    \item \textbf{$c_{12} = 0$}: Both $\bm{B}_0$ and $\bm{B}_1$ become diagonal matrices, precluding the formation of EPs.
    
    \item \textbf{$c_{12} \neq 0$}: The eigenvalues of $\bm{B}_0$ are
    \begin{align*}
        \lambda_1^2 &= \frac{\varepsilon }{2|D|} \left( c_{11} + c_{22} - \varsigma \right), \\
        \lambda_2^2 &= \frac{\varepsilon }{2|D|} \left( c_{11} + c_{22} + \varsigma \right),
    \end{align*}
    where $\varsigma = \sqrt{(c_{11} - c_{22})^2 + 4|c_{12}|^2} > 0$. There exists an invertible matrix $\bm{S}$ such that
    \begin{align*}
        \bm{S}^{-1}\bm{B}_0\bm{S} &= \begin{pmatrix} \lambda_1^2 & 0 \\ 0 & \lambda_2^2 \end{pmatrix}, \\
        \bm{S}^{-1}\bm{B}_1(t)\bm{S} &= \frac{\varepsilon }{2|D|\varsigma} \begin{pmatrix} b_{11} & b_{12} \\ b_{21} & b_{22} \end{pmatrix},
    \end{align*}
    with entries
    \begin{align*}
        b_{11}(t) &= \xi_{1}(t) c_{11}(c_{22}-c_{11}+\varsigma) + \xi_{2}(t), c_{22}(c_{11}-c_{22}+\varsigma) - 2 |c_{12}|^2 (\xi_{1}(t) + \xi_{2}(t)) \\
        b_{12}(t) &= -(\xi_1(t) - \xi_2(t))\left(2|c_{12}|^2 + c_{11}(c_{11} - c_{22} + \varsigma)\right), \\
        b_{21}(t) &= (\xi_1(t) - \xi_2(t))\left(2|c_{12}|^2 + c_{11}(c_{11} - c_{22} - \varsigma)\right), \\
        b_{22}(t) &= \xi_1(t) c_{11} (c_{11} - c_{22} + \varsigma) + \xi_2(t) c_{22}(c_{22} - c_{11} + \varsigma) + 2 |c_{12}|^2 (\xi_1(t) + \xi_2(t)).
    \end{align*}
    By Theorem \ref{th3}, EP formation requires
    \begin{align*}
        b_{12}^{(n_1-n_2)} = 0 \quad \text{or} \quad b_{21}^{(n_2-n_1)} = 0,
    \end{align*}
    which holds iff either
    \begin{align*}
        2|c_{12}|^2 + c_{11}(c_{11} - c_{22} + \varsigma) = 0
    \end{align*}
    or
    \begin{align*}
        2|c_{12}|^2 + c_{11}(c_{11} - c_{22} - \varsigma) = 0.
    \end{align*}
    Both equations imply $|c_{12}|^2 = c_{11}c_{22}$, equivalent to $c_{11}c_{22} - |c_{12}|^2 = 0$. This contradicts our initial assumption, making this case impossible.
\end{enumerate}
We conclude that Theorem \ref{th3} cannot construct EPs for system \eqref{f44} when $\det(\bm{B}_0) = c_{11}c_{22} - |c_{12}|^2 \neq 0$.


\end{document}